\crefname{hypothesis}{Hypothesis}{Hypotheses}
\title{An optimal transport analogue of the Rudin Osher Fatemi model and its corresponding multiscale theory\thanks{This work appeared in part in the first author's Ph.D.~thesis. 
\funding{This work was funded by NSERC Discovery Grant RGPIN-06329.}}}
\author{Tristan Milne\thanks{Department of Mathematics, University of Toronto, 40 St. George Street, Toronto Ontario Canada
  (\email{tmilne@math.toronto.edu}, \email{nachman@math.toronto.edu}).}
\and Adrian Nachman\footnotemark[2] \thanks{
 The Edward S. Rogers Sr. Department of Electrical and Computer Engineering, University of Toronto, 10 King's College Road, Toronto Ontario Canada}.}
\newcommand{\RR}{\mathbb{R}}
\newcommand{\step}{\lambda}
\newcommand{\onelip}{1\text{\upshape-Lip}(\Omega)}
\newcommand{\steplip}{\step\text{\upshape-Lip}(\Omega)}
\newcommand{\muc}{\mu^a}
\newcommand{\mud}{\mu^b}
\newcommand{\mudi}{\mu_i^b}
\newcommand{\rhonc}{\rho_\step^a}
\newcommand{\rhond}{\rho_\step^b}
\newcommand{\lebesgue}{\mathcal{L}_d}
\newcommand{\otplan}{\gamma_0}
\newcommand{\hybrid}{c_{2,\step}}
\newcommand{\squaredw}[2]{\frac{1}{2}W_2^2(#1, #2)}
\newcommand{\cI}{\mathcal{I}}
\newcommand{\bregmandiv}[1]{D_{\step_{#1}}}
\newcommand{\otmapnu}{S_\step}
\DeclarePairedDelimiter\norm{\lVert}{\rVert}%
\DeclarePairedDelimiter\abs{\lvert}{\rvert}%
\DeclareMathOperator*{\argmin}{arg\,min}
\DeclareMathOperator*{\essinf}{ess\,inf}
\DeclareMathOperator*{\spt}{spt}
\DeclareMathOperator*{\diam}{diam}
\let\oldabs\abs
\def\abs{\@ifstar{\oldabs}{\oldabs*}}
\let\oldnorm\norm
\def\norm{\@ifstar{\oldnorm}{\oldnorm*}}
\begin{document}

\maketitle

% REQUIRED
\begin{abstract}
We develop a theory for image restoration with a learned regularizer that is analogous to that of Meyer’s characterization of solutions of the classical variational method of Rudin-Osher-Fatemi (ROF). The learned regularizer we use is a Kantorovich potential for an optimal transport problem of mapping a distribution of noisy images onto clean ones, as first proposed by Lunz, {\"O}ktem and Sch{\"o}nlieb. We show that the effect of their restoration method on the distribution of the images is an explicit Euler discretization of a gradient flow on probability space, while our variational problem, dubbed Wasserstein ROF (WROF), is the corresponding implicit discretization. We obtain our geometric characterisation of the solution in this setting by first proving a more general convex analysis theorem for variational problems with solutions characterised by projections. We then use optimal transport arguments to obtain our WROF theorem from this general result, as well as a decomposition of a transport map into large scale "features" and small scale "details", where scale refers to the magnitude of the transport distance. Further, we leverage our theory to analyze two algorithms which iterate WROF. We refer to these as iterative regularization and multiscale transport.  For the former we prove convergence to the clean data. For the latter we produce successive approximations to the target distribution that match it up to finer and finer scales. These algorithms are in complete analogy to well-known effective methods based on ROF for iterative denoising, respectively hierarchical image decomposition. We also obtain an analogue of the Tadmor Nezzar Vese energy identity which decomposes the Wasserstein 2 distance between two measures into a sum of non-negative terms that correspond to transport costs at different scales.
\end{abstract}

% REQUIRED
\begin{keywords}
variational image restoration, learned regularizers, optimal transport, multiscale optimal transport
\end{keywords}

% REQUIRED
\begin{MSCcodes}
94A08, 90B06  
\end{MSCcodes}
\section{Introduction}
\label{sec:multiscale_introduction}%
A well-known classical method for image restoration is the total variation approach of Rudin-Osher-Fatemi (ROF) \cite{rudin1992nonlinear}. In this technique, a noisy image $f \in L^2(\RR^2)$ is restored by solving the problem
\begin{equation}
    \min_{u \in L^2(\RR^2)} \frac{1}{2}\norm{u-f}_{L^2(\RR^2)}^2 + \step \norm{u}_{TV}.\label{prob:ROF}
\end{equation}
Here, $\norm{u}_{TV}$ is the total variation norm of $u$, a regularizer known for promoting smoothness while preserving edges. Related to \eqref{prob:ROF} is the more recent variational denoising method of \cite{lunz2018adversarial}. The important novelty of \cite{lunz2018adversarial} is that it uses a learned regularizer instead of the $TV$-norm to impose regularity. The motivation for this is that one may be able to obtain a more effective regularizer -- and experiments show that this is in fact the case --  by learning it from datasets of noisy and clean images rather than using a hand-crafted one. The particular learned regularizer proposed in \cite{lunz2018adversarial} is a Kantorovich potential $u_0$ for the Wasserstein 1 distance $W_1(\mu,\nu)$, where $\mu$ and $\nu$ are probability distributions of noisy and clean data, respectively, on a compact and convex domain $\Omega\subset \RR^d$. That is, $u_0$ solves the problem
\begin{equation*}
    \sup_{u \in \onelip} \int_\Omega u(x) d\mu(x)- \int_\Omega u(y) d\nu(y),
\end{equation*}
where $\onelip$ is the set of functions with Lipschitz constant $1$ on $\Omega$. The solution $u_0$ is thus incentivized to take large values on the noisy data $\mu$ and small values on the real data $\nu$, justifying its role in restoring a noisy image\footnote{Images are taken as vectors in $\RR^d$ here, unlike \eqref{prob:ROF}, where they are elements of $L^2(\RR^2)$.} $x_0 \sim \mu$ by solving
\begin{equation}
    \min_{x \in \Omega} \frac{1}{2}|x-x_0|^2 + \step u_0(x).\label{prob:backward-euler-on-u}
\end{equation}
Experiments in \cite{lunz2018adversarial} show that  denoising performance is improved by using this learned regularizer as opposed to the $TV$-norm.

The ROF model has been intensively studied and has a well developed and beautiful theory (e.g.~\cite{meyer2001oscillating, caselles2007discontinuity, chambolle2004algorithm, chambolle2016geometric}). Let us briefly outline some of the results in \cite{meyer2001oscillating}. The solution 
$u_\step$ to \eqref{prob:ROF} can be described geometrically as the projection of $0$ onto a certain norm ball of radius $\step$ centred at $f$. Moreover, the wavelet coefficients of the residual $f - u_\step$ satisfy an $\ell_\infty$ bound in terms of $\step$, and an approximate solution to \eqref{prob:ROF} can be obtained via soft thresholding of the wavelet coefficients of $f$. Building on these results, \eqref{prob:ROF} can be solved iteratively to obtain iterative denoising (see \cite{athavale2015multiscale} or Section 7.1 of \cite{scherzer2009variational}) and the non-linear hierarchical image decomposition of \cite{tadmor2004multiscale}. The latter can be viewed as non-linear harmonic analysis of the image into components at finer and finer scale, and the analogy is further strengthened by an elegant corresponding energy equality.

We were motivated by these results for ROF to search for a corresponding theory for a learned regularizer problem related to \eqref{prob:backward-euler-on-u}. The first part of this paper establishes analogous theorems to those of \cite{meyer2001oscillating} for a learned regularizer setting. It also includes a decomposition of a certain transport map into large scale ``features'' and small scale ``details''; in this context, scale refers to the magnitude of the transport distance. The second part of the paper leverages our results to analyze two natural iterative optimal transport procedures. We refer to these as iterative regularization and multiscale transport, as they are in correspondence with iterative denoising with ROF and the multiscale image decomposition of \cite{tadmor2004multiscale}. For the former, we prove convergence towards the clean data distribution $\nu$. The latter has a richer structure, and modifies $\nu$ at each stage to obtain a ``sketch'' of $\mu$ which is indistinguishable from it up to a pre-defined scale. Our results in this direction also include an energy identity analogous to that of \cite{tadmor2004multiscale} which decomposes the squared Wasserstein 2 distance $W_2^2(\mu,\nu)$ into a sum of non-negative terms which picks out the scales of transport.

%TODO: are these really the best references for iterative denoising?
While \eqref{prob:backward-euler-on-u} is a pointwise formulation of image restoration, the setting is more global in that $u_0$ depends on the distribution $\mu$ and $\nu$ of noisy and clean images. We have thus found it more natural to analyse the measure obtained by modifying $\mu$ with the solution map to \eqref{prob:backward-euler-on-u}. Taking  this as a starting point, the main object of study in this paper is 
\begin{equation}
\inf_{\rho \in \mathcal{P}(\Omega)} \frac{1}{2}W_2^2(\mu,\rho) + \step W_1(\rho, \nu).\tag{WROF}\label{prob:initial-multiscale-statement}
\end{equation}
Here $\mathcal{P}(\Omega)$ is the space of Borel probability measures on $\Omega$, and for $p \geq 1$, $W_p: \mathcal{P}(\Omega) \times \mathcal{P}(\Omega) \rightarrow \RR$ is the Wasserstein $p$ distance; for more background on optimal transport we refer the reader to \cite{santambrogio2015optimal} or \cite{villani2009optimal}. Given that $\mu$ consists of noisy images, and $\nu$ is a distribution of clean images, we view $\frac{1}{2}W_2^2(\mu,\rho)$ as a fidelity term while $W_1(\rho, \nu)$ measures regularity. As we will see in \Cref{prop:meyer-for-multiscale-simplified} and \Cref{thm:analogue_of_wavelet_shrinkage}, this problem has properties which are in exact correspondence with the aforementioned results for ROF. As a consequence we call it Wassertein ROF (or WROF for short).

To motivate the study of \eqref{prob:initial-multiscale-statement}, let us specify its relationship to the image denoising technique of \cite{lunz2018adversarial}. We will show, in \Cref{lem:char-of-solutionmap}, that the measure obtained by pushing $\mu$ forward under the solution map of \eqref{prob:backward-euler-on-u} is the unique solution to 
\begin{equation}
    \inf_{\rho \in \mathcal{P}(\Omega)} \frac{1}{2}W_2^2(\rho, \mu) + \step\langle u_0, \rho \rangle.\label{prob:forward_euler_inW2}
\end{equation}
% , given by
% \begin{equation*}
%     W^p_p(\mu,\rho) := \inf_{\gamma \in \Pi(\mu,\rho)} \int_{\Omega\times \Omega} |x-y|^p d\gamma(x,y),
% \end{equation*}
% where
% \begin{equation*}
%     \Pi(\mu,\nu) := \{ \gamma \in \mathcal{P}(\Omega \times \Omega) \mid (\pi_x)_\# \gamma = \mu, (\pi_y)_\# \gamma = \nu\}.
% \end{equation*}
% The maps $\pi_x$ and $\pi_y$ are the canonical projections, $(\pi_x)_\# \gamma$ and $(\pi_y)_\# \gamma$ denote the pushforward measures, and $\Pi(\mu,\nu)$ is called the set of admissible transport plans. 
Since $u_0$ is a sub-gradient of the convex functional $\mu \mapsto W_1(\mu,\nu)$, \eqref{prob:forward_euler_inW2} can be viewed as an explicit Euler discretization of a gradient flow on the space $\mathbb{W}_2(\Omega)$ of probability distributions metrized by the Wasserstein 2 distance. A step of the implicit Euler discretization of the same flow is \eqref{prob:initial-multiscale-statement}. We focus on \eqref{prob:initial-multiscale-statement}, as opposed to \eqref{prob:forward_euler_inW2}, because in general the implicit method has better properties than the explicit one. We note, however, that in certain settings the two approaches coincide (see \Cref{lem:multiscale_generalizes_TTC}). In addition, the implicit Euler approach retains a pointwise reconstruction method; there is a continuous function $\varphi_\step$ such that the solution $\rho_\step$ to \eqref{prob:initial-multiscale-statement} is obtained by modifying $\mu$ pointwise by the solution map for
\begin{equation}
    \inf_{x \in \Omega} \frac{1}{2}|x-x_0|^2 - \varphi_\step(x). \label{eq:different_learned_reg}
\end{equation}
In fact, $\varphi_\step$ is a Kantorovich potential for the transport from $\mu$ to $\nu$ under the cost function $\hybrid$ defined in \eqref{eq:hybridcost} (see \Cref{prop:relation_between_Tstep_andvarphistep}). In this sense, the solution to \eqref{prob:initial-multiscale-statement} is obtained via restoration with a learned regularizer $-\varphi_\step$. Moreover, $\varphi_\step$ can be taken so that $\frac{1}{2}|x|^2 - \varphi_\step(x)$ is convex, which implies that the pointwise restoration algorithm \eqref{eq:different_learned_reg} has the additional benefit of being a convex optimization problem; in this light, \eqref{eq:different_learned_reg} bears a similarity to the convex learned regularizers of \cite{mukherjee2020learned}. We also suspect that restoration via \eqref{eq:different_learned_reg} may be more effective than \eqref{prob:backward-euler-on-u}, since \Cref{prop:convergence_to_nu} shows that iterations of this procedure provably converge to the clean image distribution $\nu$.

In the remainder of this section we will summarize our main results, with \Cref{sec:intro-characterisation-of-solution} describing our geometric characterisation of the solution of \eqref{prob:initial-multiscale-statement}, while \Cref{sec:intro-iterative_regularization} and \Cref{sec:intro-nonlinear_Plancherel} outline our iterative procedures.

\subsection{Geometric characterisation of the solution of \eqref{prob:initial-multiscale-statement}}
\label{sec:intro-characterisation-of-solution}
In this section we provide analogues in the setting of a learned regularizer of results giving a geometric characterisation of the solution to ROF.

First, we recall some classical results for ROF. In studying this problem, it is helpful to define the dual norm to $\norm{\cdot}_{TV}$; for $v \in L^2(\RR^2)$, define the $*$-norm as
\begin{equation}
    \norm{v}_* = \sup \{ \int_{\RR^2} v u dx \mid \norm{u}_{TV} \leq 1\}.\label{eq:star_norm}
\end{equation}
%We note that $f \in L^2(\RR^2)$ implies that $\norm{f}_* < \infty$, since $BV(\RR^2) \subset L^2(\RR^2)$, with the inclusion map being continuous. 
The following theorem, mentioned in \Cref{sec:multiscale_introduction}, is a slight reformulation of results from \cite{meyer2001oscillating} on the solution to \eqref{prob:ROF}. Specifically, it characterises the solution as a projection of $0$ onto a ball in the $*$-norm centred at $f$. 
\begin{theorem}[Meyer]
\label{cor:meyer_for_ROF}
For all $\step >0$, \eqref{prob:ROF} has a unique solution $u_\step$, which can also be expressed as the solution to
\begin{equation}
    \min_{ \norm{u-f}_* \leq \step} \norm{u}^2_{L^2(\RR^2)}.\label{prob:ROF-projection}
\end{equation}
Consequently, if $\norm{f}_* \leq \step$,  $u_\step= 0$. On the other hand, if $\norm{f}_* > \step$, then $\norm{f-u_\step}_* = \step$ and
\begin{equation*}
    \int_{\RR^2} u_\step (f-u_\step)dx = \step \norm{u}_{TV}.
\end{equation*}
\end{theorem}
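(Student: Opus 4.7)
The plan is to proceed by classical convex duality. Existence and uniqueness of $u_\step$ follow from the direct method applied to \eqref{prob:ROF}: the objective is strictly convex (its fidelity part already is), coercive, and weakly lower semicontinuous on $L^2(\RR^2)$.

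The central tool will be the subdifferential characterization of $J(u) = \norm{u}_{TV}$. By definition of $\norm{\cdot}_*$ in \eqref{eq:star_norm} as the dual norm to $\norm{\cdot}_{TV}$, one has $v \in \partial J(u)$ iff $\norm{v}_* \leq 1$ and $\int v u\,dx = \norm{u}_{TV}$. Writing Fermat's rule for \eqref{prob:ROF} yields $(f - u_\step)/\step \in \partial J(u_\step)$, equivalently
\begin{equation}
\norm{f - u_\step}_* \leq \step \qquad \text{and} \qquad \int u_\step (f - u_\step)\,dx = \step \norm{u_\step}_{TV}. \label{eq:proof-plan-optimality}
\end{equation}
From here the two cases separate cleanly. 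If $\norm{f}_* \leq \step$, I check directly that $u = 0$ satisfies both parts of \eqref{eq:proof-plan-optimality}, so by uniqueness $u_\step = 0$. If instead $\norm{f}_* > \step$, then necessarily $u_\step \neq 0$; combining the dual-pairing inequality $\int u_\step(f-u_\step)\,dx \leq \norm{f-u_\step}_*\,\norm{u_\step}_{TV}$ with the equality in \eqref{eq:proof-plan-optimality} and dividing by $\norm{u_\step}_{TV} > 0$ forces $\norm{f-u_\step}_* \geq \step$, which gives equality in view of the upper bound.

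The last step is to show $u_\step$ is the unique solution of \eqref{prob:ROF-projection}. For any competitor $u'$ with $\norm{u'-f}_* \leq \step$, I expand
\begin{align*}
\norm{u'}_{L^2(\RR^2)}^2 - \norm{u_\step}_{L^2(\RR^2)}^2 &= \norm{u'-u_\step}_{L^2(\RR^2)}^2 + 2\int u_\step(u'-u_\step)\,dx \\
&\geq 2\int u_\step(u'-f)\,dx + 2\int u_\step(f-u_\step)\,dx,
\end{align*}
bound the first integral below by $-\norm{u'-f}_*\,\norm{u_\step}_{TV} \geq -\step\,\norm{u_\step}_{TV}$ via the dual pairing, and substitute \eqref{eq:proof-plan-optimality} into the second; the two terms cancel exactly, giving $\geq 0$. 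Uniqueness of the projection follows from strict convexity of $\norm{\cdot}_{L^2(\RR^2)}^2$ on the convex feasible set.

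The only delicate point, and hence the main obstacle, is justifying the subdifferential identity $\partial J(u) = \{v \in L^2(\RR^2) : \norm{v}_* \leq 1,\ \int u v\,dx = \norm{u}_{TV}\}$. This requires that $J$ be proper, convex, and lower semicontinuous on $L^2(\RR^2)$ so that the Fenchel biconjugate identity and the standard dual-norm calculus for the support function of the unit ball $\{v : \norm{v}_* \leq 1\}$ apply. Once this is in hand, everything else is essentially algebraic manipulation of the dual pairing.
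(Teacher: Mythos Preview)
Your proof is correct and follows the classical direct route: Fermat's rule combined with the subdifferential characterization of the TV seminorm, then an explicit comparison argument for the projection. The paper takes a genuinely different path. It first proves an abstract result (\Cref{thm:general_meyer}) for problems of the form $\min_{x^*} F^*(x^*) + 1_K^*(x^* - y_0^*)$, showing that the minimizer is always a projection of $y_0^*$, in a Bregman-type divergence, onto the set $\{x^* : \partial F^*(x^*) \cap K \neq \emptyset\}$; it then specializes this with $F(u) = \tfrac{1}{2}\norm{u}_{L^2}^2 + \langle f, u\rangle$, $K = \{v : \norm{v}_* \leq \step\}$, and $y_0^* = 0$, after computing that the divergence reduces to $\tfrac{1}{2}\norm{u-v}_{L^2}^2$ on the relevant set. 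Your argument is more elementary and self-contained for ROF alone; the paper's abstraction is built precisely so that the same machinery applies verbatim to \eqref{prob:initial-multiscale-statement}, where a direct Fermat-rule argument on $\rho \mapsto W_1(\rho,\nu)$ over $\mathcal{M}(\Omega)$ would be less transparent. One small point worth a word in your write-up: dividing by $\norm{u_\step}_{TV}$ uses implicitly that on $L^2(\RR^2)$ the only function with vanishing total variation is zero.
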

\begin{remark}
\label{remark:wavelet-shrinkage}
\Cref{cor:meyer_for_ROF} provides a formal statement of some of the results we have mentioned in \Cref{sec:multiscale_introduction}. For a statement of further results on ROF, such as the $\ell_\infty$ bound on the wavelet coefficients of $f-u_\step$ or the fact that an approximate solution can be obtained by applying soft-thresholding to the wavelet coefficients of $f$, see \cite{meyer2001oscillating}, Lemma 10, Section 1.14.
\end{remark}

Our \Cref{prop:meyer-for-multiscale-simplified} gives analogous results for \eqref{prob:initial-multiscale-statement}. To make the analogy clear, \Cref{tab:analogy} gives the correspondence between the key concepts. In this case, the measure $\nu$ is projected with respect to a divergence $D_\step$ onto a set of measures $B_\step(\mu)$. We will be more precise about $D_\step$ and $B_\step(\mu)$ in \eqref{def:D_deff} and \eqref{eq:ball_def}. We will see that these notions are natural from the point of view of convex analysis; for now, we describe them in intuitive terms.

A key role will be played by an optimal transport problem that uses a cost function $\hybrid: \Omega \times \Omega \rightarrow \RR$ related to the Huber loss function \cite{huber1964robust} for robust estimation. It is given by
\begin{equation}
\label{eq:hybridcost}
\hybrid(x,y) = \begin{cases}
\frac{1}{2}|x-y|^2 &\quad |x-y| \leq \step,\\
\step|x-y| -\frac{\step^2}{2} &\quad |x-y| \geq \step.
\end{cases}
\end{equation}
This can be viewed as a variation on the standard cost function $c_2(x,y) = \frac{1}{2}|x-y|^2$, except with a certain economy of scale; in particular, the cost of transport at distances larger than $\step$ is discounted. This may be advantageous for image restoration since this cost is robust to outliers. The relationship between the solution $\rho_\step$ to \eqref{prob:initial-multiscale-statement} and an optimal plan transporting $\mu$ to $\nu$ under the cost $\hybrid$ will be made explicit in \Cref{prop:relation_between_Tstep_andvarphistep}. We also note that the minimum value of \eqref{prob:initial-multiscale-statement} is the optimal transport cost from $\mu$ to $\nu$ for the pointwise cost $\hybrid$; see \Cref{cor:wrof-is-hybrid}.

The set $B_\step(\mu)$ consists of measures which can be reached from $\mu$ with displacement less than $\step$ by an optimal transport plan for the cost $\hybrid$. In this sense, measures in $B_\step(\mu)$ are indistinguishable from $\mu$ up to scale $\step$. 

The divergence $D_\step(\nu,\rho)$ is non-negative, and is $0$ only when $\rho= \nu$ provided $\mu$ is absolutely continuous with respect to Lebesgue measure, which we denote by $\mu \ll \lebesgue$. Further, we will show that $D_\step(\nu, \rho)$ has an interesting economic interpretation. In short, assuming that goods are sold to consumers with distribution $\nu$ and purchased from a manufacturer with distribution $\rho$, $D_\step(\nu,\rho)$ represents the total loss of value in a supply chain when the transport cost has an economy of scale and consumers adopt a ``buy local'' policy. More concretely, at the optimal $\rho_\step$ for \eqref{prob:initial-multiscale-statement}, $D_\step(\nu, \rho_\step)$ measures the amount of transport between $\mu$ and $\nu$ at scale larger than $\lambda$; our results (specifically \Cref{prop:meyer-for-multiscale}, together with \Cref{cor:wrof-is-hybrid}), imply
    \begin{equation}
        \int_{\Omega^2}\frac{1}{2}(|x-y|-\step)^2_+ d\tilde{\gamma}_{0} \geq D_{\step}(\nu, \rho_\step)
        \geq \int_{\Omega^2}\frac{1}{2}(|x-y|-\step)^2_+ d\gamma_{0},\label{eq:D_interp}
    \end{equation}
    where $\tilde{\gamma}_{0}$ and $\gamma_0$ are optimal plans for transporting $\mu$ to $\nu$ under the costs $\hybrid$ and $c_2$, respectively. 
    
Analogously to \Cref{cor:meyer_for_ROF}, our first theorem expresses the solution to \eqref{prob:initial-multiscale-statement} as a projection of $\nu$ onto $B_\step(\mu)$. We also include an additional result (see \eqref{eq:displacement_bound_for_w2optimal}) which is analogous to the $\ell_\infty$ bound on the wavelet coefficients of the residual $f - u_\step$ mentioned in \Cref{remark:wavelet-shrinkage}.
\begin{theorem}[Main theorem, part 1]
\label{prop:meyer-for-multiscale-simplified}
Let $\Omega$ be compact and convex with non-negligible interior, and suppose $\mu \ll \mathcal{L}_d$. For all $\step>0$, \eqref{prob:initial-multiscale-statement} has a unique solution $\rho_\step$, which can also be expressed as the solution to
\begin{equation}
     \min_{\rho \in B_\step(\mu)}\bregmandiv{}(\nu, \rho)\label{prob:WROF_projection}
\end{equation}
Consequently, if $\nu \in B_\step(\mu)$, $\rho_\step = \nu$. On the other hand, if $\nu \not \in B_\step(\mu)$, then there exists $\varphi_\step$ a Kantorovich potential for $W_2(\mu,\rho_\step)$ satisfying $\text{Lip}(\varphi_\step) = \step$ and
%$\partial\left(\frac{1}{2}W_2^2(\cdot, \mu)\right)(\rho_\step)$ is the subdifferential of the convex functional $\rho \mapsto \frac{1}{2}W_2^2(\rho, \mu)$ evaluated at $\rho_\step$, and 
\begin{equation}
    \int_\Omega \varphi_\step(d\nu - d\rho_\step) = \step W_1(\rho_\step, \nu).
\end{equation}
Finally, the optimal transport map $T_\lambda$ for $W_2(\mu,\rho_\step)$ satisfies
    \begin{equation}
        \norm{I-T_\lambda}_{L^\infty(\mu)} \leq \step.\label{eq:displacement_bound_for_w2optimal}
    \end{equation}   
\end{theorem}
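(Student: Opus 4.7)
The plan is to cast \eqref{prob:initial-multiscale-statement} into the general convex-analytic projection framework announced earlier in the paper, with fidelity $\tfrac12 W_2^2(\mu,\cdot)$ and regularizer $W_1(\cdot,\nu)$, and then unpack the abstract primal-dual conclusions into concrete statements about Kantorovich potentials and optimal maps. Existence is immediate from the direct method: $\mathcal{P}(\Omega)$ is weak-$*$ compact since $\Omega$ is compact, and both summands are weak-$*$ lower semicontinuous and convex. Uniqueness of $\rho_\step$ follows from strict convexity of $\rho \mapsto \tfrac12 W_2^2(\mu,\rho)$ along Wasserstein geodesics, a consequence of Brenier's theorem under $\mu \ll \mathcal{L}_d$.

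To produce the potential $\varphi_\step$ I would invoke Kantorovich duality for $W_1$ to rewrite
\begin{equation*}
\step W_1(\rho,\nu) = \sup\Bigl\{\int\varphi\,(d\nu - d\rho) : \text{Lip}(\varphi) \leq \step\Bigr\},
\end{equation*}
exchange inf and sup in \eqref{prob:initial-multiscale-statement}, and observe that the resulting inner infimum over $\rho$ is essentially \eqref{prob:forward_euler_inW2} with $\step u_0$ replaced by $-\varphi$; by \Cref{lem:char-of-solutionmap}, its unique minimizer is the pushforward of $\mu$ by the solution map to a pointwise problem analogous to \eqref{prob:backward-euler-on-u}. A saddle-point argument then yields a dual maximizer $\varphi_\step$ with $\text{Lip}(\varphi_\step) \leq \step$ for which the identity $\int \varphi_\step(d\nu - d\rho_\step) = \step W_1(\rho_\step,\nu)$ is exactly complementary slackness, and for which primal optimality forces $\varphi_\step$ to be a $c_2$-Kantorovich potential for $W_2(\mu,\rho_\step)$. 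When $\nu \notin B_\step(\mu)$, the Lipschitz constraint must be saturated -- otherwise a slight rescaling of $\varphi_\step$ would strictly improve the dual value -- giving $\text{Lip}(\varphi_\step) = \step$.

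The displacement bound and the projection characterization then fall out. Brenier's theorem gives $T_\step(x) = x - \nabla\varphi_\step(x)$ for $\mu$-a.e.\ $x$, so
\begin{equation*}
\|I - T_\step\|_{L^\infty(\mu)} = \|\nabla\varphi_\step\|_{L^\infty(\mu)} \leq \text{Lip}(\varphi_\step) \leq \step,
\end{equation*}
which proves \eqref{eq:displacement_bound_for_w2optimal} and shows $\rho_\step \in B_\step(\mu)$. The pair $(\rho_\step,\varphi_\step)$ now satisfies the hypotheses of the abstract projection theorem with $\bregmandiv{}$ as divergence and $B_\step(\mu)$ as admissible set, and the complementary slackness identity is precisely the certificate that $\rho_\step$ minimizes $\bregmandiv{}(\nu,\cdot)$ over $B_\step(\mu)$, giving \eqref{prob:WROF_projection}. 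The case $\nu \in B_\step(\mu)$ is handled separately: one checks that $\bregmandiv{}(\nu,\nu)=0$ and invokes uniqueness of the Brenier map under $\mu \ll \mathcal{L}_d$ to conclude $\rho_\step = \nu$.

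The main technical obstacle lies in the primal-dual bookkeeping of the second step: showing that a single $\varphi_\step$ can be chosen to serve simultaneously as a $c_2$-Kantorovich potential for $W_2(\mu,\rho_\step)$ and as a $\step$-Lipschitz potential realizing $W_1(\rho_\step,\nu)$, despite the inherent non-uniqueness of Kantorovich potentials on disjoint supports. Justifying the inf-sup exchange on $\mathcal{P}(\Omega)$, ensuring saturation of $\text{Lip}(\varphi_\step)=\step$ in the non-trivial case, and deducing \eqref{prob:WROF_projection} from the abstract theorem via the transport identifications of $\bregmandiv{}$ and $B_\step(\mu)$ all require care; this is where compactness and convexity of $\Omega$ together with $\mu \ll \mathcal{L}_d$ come into play.
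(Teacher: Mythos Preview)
Your approach closely mirrors the paper's: both cast \eqref{prob:initial-multiscale-statement} as an instance of the abstract projection framework (\Cref{thm:general_meyer}) with $F(\varphi) = -\int_\Omega \varphi^{c_2}\,d\mu$ and $K = \steplip$, produce the dual maximizer $\varphi_\step$, and read off the projection characterization and the extremality relation from the general theorem. There is, however, a genuine gap in your displacement-bound argument.

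You write that Brenier's theorem gives $T_\step(x) = x - \nabla\varphi_\step(x)$ for $\mu$-a.e.\ $x$. This is the wrong potential: in your saddle-point setup $\varphi_\step$ arises as the potential on the $\rho_\step$ side (it lies in $\partial\bigl(\tfrac12 W_2^2(\cdot,\mu)\bigr)(\rho_\step)$), so the optimal map \emph{from $\mu$ to $\rho_\step$} is $T_\step = I - \nabla\varphi_\step^{c_2}$, not $I - \nabla\varphi_\step$. The bound $\|I - T_\step\|_{L^\infty(\mu)} \leq \step$ therefore requires $\text{Lip}(\varphi_\step^{c_2}) \leq \step$, and this does \emph{not} follow immediately from $\text{Lip}(\varphi_\step) \leq \step$: the functions $x \mapsto \tfrac12|x-y|^2 - \varphi_\step(y)$ whose infimum defines $\varphi_\step^{c_2}$ are not themselves $\step$-Lipschitz. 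The paper fills this gap via \Cref{lem:equality_of_c_transforms}, which shows that for $\varphi \in \steplip$ on a convex $\Omega$ the infimum defining $\varphi^{c_2}(x)$ is attained in $B_\step(x)\cap\Omega$, whence $\varphi^{c_2} = \varphi^{\hybrid}$ and $\text{Lip}(\varphi^{c_2}) \leq \step$. This lemma is precisely where convexity of $\Omega$ is used, and it is the step your outline is missing. You do flag ``compactness and convexity of $\Omega$'' as relevant in your closing paragraph, but you have not identified the specific place they enter, and the incorrect Brenier formula hides the issue.

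A secondary point: uniqueness requires strict convexity of $\rho \mapsto \tfrac12 W_2^2(\mu,\rho)$ in the \emph{linear} sense (convex combinations of measures), not along Wasserstein geodesics as you write. The former holds under $\mu \ll \lebesgue$ and $\lebesgue(\partial\Omega) = 0$ (Santambrogio, Proposition~7.19) and is what the paper invokes; displacement convexity is a different notion and would not directly yield uniqueness of the minimizer here.
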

\begin{table}
\caption{The analogy between \eqref{prob:ROF} and \eqref{prob:initial-multiscale-statement}. The decompositions of $f$ and $S_0$ are described in \eqref{eq:ROF-decomp} and \eqref{eq:decomposition_of_transport}, respectively.\label{tab:analogy}}
\begin{center}
\begin{tabular}{c|c|c}
   & ROF & WROF  \\
   Fidelity  & $\norm{u-f}^2_{L^2(\RR^2)}$ & $W_2^2(\rho, \mu)$ \\
   Regularity & $\norm{u}_{TV}$ & $W_1(\rho,\nu)$ \\
   Projection Metric  & $\norm{u}^2_{L^2(\RR^2)}$ &$D_\step(\nu, \rho)$\\
   Projection Set & $\{ u \mid \norm{u-f}_* \leq \step\}$ & $B_\step(\mu)$ \\
   Decomposition & $f = v_\step + u_\step$ & $S_0 = T_\step^{-1} \circ S_\step$ 
\end{tabular}
\end{center}
\end{table}
A more detailed version of this result is given in \Cref{prop:meyer-for-multiscale}. In \Cref{sec:deeper_analysis} and \Cref{sec:applying_general_to_WROF} we will clarify the strong similarities between \Cref{cor:meyer_for_ROF} and \Cref{prop:meyer-for-multiscale-simplified}  by proving a general theorem for a class of convex optimization problems of the form \eqref{prob:general_minimization_problem} for which the solution map is a projection. We will show that ROF and \eqref{prob:initial-multiscale-statement} are included in this class, so that \Cref{cor:meyer_for_ROF} and \Cref{prop:meyer-for-multiscale-simplified} will follow as particular cases. 

More insight into $\varphi_\step$ and $T_\step$ from \Cref{prop:meyer-for-multiscale-simplified} is given in the following proposition.
\begin{proposition}
\label{prop:relation_between_Tstep_andvarphistep}
    Under the notation and assumptions of \Cref{prop:meyer-for-multiscale-simplified}, 
    \begin{enumerate}
        \item $\varphi_\step$ is a solution to
        \begin{equation*}
            \sup_{\varphi \in C(\Omega)} \int_\Omega \varphi^{\hybrid} d\mu + \int_\Omega \varphi d\nu,
        \end{equation*}
        where $\varphi^{\hybrid}(x) = \inf_{y \in \Omega} \hybrid(x,y) - \phi(y)$,
        \item $T_\step$, which by definition satisfies $(T_\step)_\# \mu = \rho_\step$, is the solution map to \eqref{eq:different_learned_reg}, and
        \item if $\gamma_0$ is an optimal transport plan for transporting $\mu$ to $\nu$ under the cost $\hybrid$, and if $(x,y) \in \spt(\gamma_0)$, then
        \begin{equation}
            T_\step(x) = \begin{cases}
                y &\quad |x-y|\leq \step,\\
                \left(1- \frac{\lambda}{|x-y|}\right) x + \frac{\lambda}{|x-y|} y &\quad |x-y|> \step.
            \end{cases}\label{eq:restoration_map}
        \end{equation}
    \end{enumerate}
\end{proposition}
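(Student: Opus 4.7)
The plan is to prove the three parts in sequence, using \Cref{prop:meyer-for-multiscale-simplified} and \Cref{cor:wrof-is-hybrid} to identify $\varphi_\step$ as a Kantorovich potential for the $\hybrid$-transport problem; then invoking the standard Brenier-type characterisation of the optimal $W_2$-map via its dual potential to identify $T_\step$ with the solution map of \eqref{eq:different_learned_reg}; and finally combining these with $\hybrid$-complementary slackness on $\spt(\gamma_0)$ to derive the explicit formula \eqref{eq:restoration_map}. The main obstacle is the pointwise equality $\varphi_\step^{c_2} = \varphi_\step^\hybrid$ in part (1), as this is the one place where the Huber structure of the cost genuinely interacts with the $\step$-Lipschitz bound on $\varphi_\step$; parts (2) and (3) are then formal consequences.

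For part (1), I would start from \Cref{prop:meyer-for-multiscale-simplified}, which supplies a $c_2$-Kantorovich potential $\varphi_\step$ for $W_2(\mu,\rho_\step)$ with $\mathrm{Lip}(\varphi_\step) = \step$ and $\int \varphi_\step(d\nu - d\rho_\step) = \step W_1(\rho_\step,\nu)$. Substituting the $c_2$-dual identity $\tfrac{1}{2}W_2^2(\mu,\rho_\step) = \int \varphi_\step^{c_2} d\mu + \int \varphi_\step d\rho_\step$ into the WROF value telescopes the integrals against $\rho_\step$ and yields
\begin{equation*}
\frac{1}{2}W_2^2(\mu,\rho_\step) + \step W_1(\rho_\step,\nu) = \int \varphi_\step^{c_2} d\mu + \int \varphi_\step d\nu,
\end{equation*}
which by \Cref{cor:wrof-is-hybrid} equals the $\hybrid$-optimal transport cost between $\mu$ and $\nu$. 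It then suffices to show $\varphi_\step^{c_2} = \varphi_\step^\hybrid$ pointwise on $\Omega$. The inequality $\varphi_\step^\hybrid \leq \varphi_\step^{c_2}$ is automatic from $\hybrid \leq c_2$. For the reverse, given $y$ with $|x-y| > \step$, I would project $y$ onto the sphere $\{|x-\cdot|=\step\}$ to obtain $y^\ast$; the $\step$-Lipschitz estimate $|\varphi_\step(y) - \varphi_\step(y^\ast)| \leq \step(|x-y|-\step)$ rearranges precisely to $\tfrac{1}{2}|x-y^\ast|^2 - \varphi_\step(y^\ast) \leq \hybrid(x,y) - \varphi_\step(y)$, and taking the infimum in $y$ gives $\varphi_\step^{c_2}(x) \leq \varphi_\step^\hybrid(x)$.

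Part (2) then follows from the standard fact that, for a $c_2$-Kantorovich potential $\varphi$ on the target side and $\mu \ll \lebesgue$, the unique Brenier map is $T(x) = \argmin_y \tfrac{1}{2}|x-y|^2 - \varphi(y)$, which is exactly \eqref{eq:different_learned_reg} with $x_0 = x$. For part (3) I would fix $(x,y) \in \spt(\gamma_0)$ and apply $\hybrid$-complementary slackness $\varphi_\step^\hybrid(x) + \varphi_\step(y) = \hybrid(x,y)$ together with $\varphi_\step^\hybrid = \varphi_\step^{c_2}$ from part (1). If $|x-y| \leq \step$ then $\hybrid(x,y) = \tfrac{1}{2}|x-y|^2$, so $y$ itself achieves the infimum defining $\varphi_\step^{c_2}(x)$ and hence $T_\step(x) = y$. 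If $|x-y| > \step$, set $z^\ast = (1-\tfrac{\step}{|x-y|})x + \tfrac{\step}{|x-y|} y$, so that $|x-z^\ast|=\step$ and $|z^\ast - y| = |x-y|-\step$; the same Lipschitz-versus-cost comparison used in part (1), applied in reverse to relate $\varphi_\step(z^\ast)$ and $\varphi_\step(y)$, shows $\tfrac{1}{2}|x-z^\ast|^2 - \varphi_\step(z^\ast) \leq \varphi_\step^{c_2}(x)$, so $z^\ast$ achieves the infimum and $T_\step(x) = z^\ast$. Each case in \eqref{eq:restoration_map} thus reduces to one application of the identity secured in part (1).
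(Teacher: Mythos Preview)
Your proof is correct and follows essentially the same approach as the paper. The key identity $\varphi_\step^{c_2}=\varphi_\step^{\hybrid}$ for $\step$-Lipschitz $\varphi_\step$ is exactly the content of the paper's \Cref{lem:equality_of_c_transforms}, proved there by the same projection-onto-$B_\step(x)$ argument you use; part~1 then matches \Cref{lem:first_dual_and_second_are_equivalent}, part~2 is the paper's \Cref{lem:uniquesolvability}, and for part~3 the paper (in \Cref{lem:gradientofvarphi}) differentiates $z\mapsto\hybrid(z,y)-\varphi_\step^{c_2}(z)$ at $z=x$ to read off $\nabla\varphi_\step^{c_2}(x)$, whereas you directly verify that the candidate point attains the infimum in $\varphi_\step^{c_2}(x)$---a cosmetic difference, since both rest on the same complementary slackness plus $\step$-Lipschitz structure. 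One small point worth making explicit: your projected point $y^\ast$ (resp.\ $z^\ast$) lies in $\Omega$ because it is a convex combination of $x$ and $y$ and $\Omega$ is convex; and the conclusion $T_\step(x)=z^\ast$ uses the a.e.\ uniqueness of the minimizer from \Cref{lem:uniquesolvability}, so strictly speaking \eqref{eq:restoration_map} holds at interior points where $\varphi_\step^{c_2}$ is differentiable (the paper carries this caveat in \Cref{lem:gradientofvarphi}).
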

\begin{remark}
\label{rem:pointwise_restoration}
\Cref{prop:relation_between_Tstep_andvarphistep} shows precisely the outcome $T_\lambda (x_0)$ of restoring a noisy image $x_0$ by solving \eqref{eq:different_learned_reg} with the learned regularizer $\varphi_\step$. The answer is determined by $\gamma_0$; if $(x_0,y_0) \in \spt(\gamma_0)$ is such that $|x_0-y_0|\leq \step$, $T_\step$ completes the transport from $x_0$ to $y_0$. On the other hand, if $|x_0-y_0| > \step$, $T_\step$ takes a step of size $\step$ in the direction of $y_0$. 
\end{remark}

Assuming that $\nu$ is also absolutely continuous, we further establish in the following theorem that $\rho_\step$ is obtained by applying soft thresholding to an optimal transport map from $\nu$ to $\mu$. Recall that the soft thresholding map is given by $s_\step: \RR \rightarrow \RR$,
\begin{equation*}
    s_{\step}(t) := \text{sign}(t)(|t|-\step)_+.
\end{equation*}
This provides an analogous result to the soft thresholding property of ROF mentioned in \Cref{remark:wavelet-shrinkage}, except that here we obtain the exact solution rather than an approximate one.

%TODO: is the bit about S_\lambda being a W1 optimal map worth including?
\begin{theorem}[Main theorem, part 2]
\label{thm:analogue_of_wavelet_shrinkage}
In addition to the hypotheses of \Cref{prop:meyer-for-multiscale-simplified}, assume that $\nu \ll \lebesgue$. Then
\begin{enumerate}
     \item $\rho_\step \ll \lebesgue$,\label{claim:rho_step_AC}
      \item $S_0$ is an optimal transport map for the cost $\hybrid$ sending $\nu$ to $\mu$ if and only if
    \begin{equation}
        S_0 = T_\step^{-1} \circ S_\step\label{eq:decomposition_of_transport}
    \end{equation}
    where $T_\step^{-1}$ is a Borel map satisfying $T_\step^{-1}\circ T_\step (x) = x$ $\mu$ almost everywhere, and $S_\step$ is an optimal transport map for $W_1(\nu,\rho_\step)$.
    \label{claim:existence_of_huber_map}
    \item For any such $S_0$, the solution $\rho_\step$ to \eqref{prob:initial-multiscale-statement} is obtained as $\rho_\step = (S_\step)_\# \nu$, where \label{claim:rho_step_obt_from_ST}
    \begin{equation}
        S_\step(y):= y + s_{\step}(|S_0(y)-y|)\frac{S_0(y)-y}{|S_0(y) -y|}.\label{eq:soft_thresholding_tmap}
    \end{equation}
\end{enumerate}
\end{theorem}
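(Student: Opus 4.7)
The plan is to prove the three claims together, deriving each from the explicit formula \eqref{eq:restoration_map} for $T_\step$ and the Kantorovich potential $\varphi_\step$ furnished by \Cref{prop:meyer-for-multiscale-simplified}. As a preliminary, since $\mu, \nu \ll \lebesgue$ and the Huber-type cost $\hybrid$ is a $C^1$ strictly increasing function of $|x-y|$ with the requisite twist property, a Gangbo--McCann-type result produces a $\hybrid$-optimal Monge map $S_0$ with $(S_0)_\# \nu = \mu$, so that $\gamma_0 := (S_0, I)_\# \nu$ is the corresponding optimal plan.

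The central computation addresses claim \ref{claim:rho_step_obt_from_ST}. With $S_\step$ defined by \eqref{eq:soft_thresholding_tmap}, I would substitute $x = S_0(y)$ into \eqref{eq:restoration_map} and split into cases. On $\{y : |S_0(y)-y|\leq \step\}$ the soft threshold vanishes and \eqref{eq:restoration_map} returns $y$, so $S_\step(y) = y = T_\step(S_0(y))$. On the complement, both sides rearrange algebraically to $S_0(y) - \step(S_0(y)-y)/|S_0(y)-y|$. Hence $S_\step = T_\step \circ S_0$ $\nu$-a.e., which yields
\[
(S_\step)_\# \nu \;=\; (T_\step)_\# (S_0)_\# \nu \;=\; (T_\step)_\# \mu \;=\; \rho_\step,
\]
using that $T_\step$ is the $W_2$-optimal map $\mu \to \rho_\step$ from \Cref{prop:meyer-for-multiscale-simplified}.

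To establish the $W_1$-optimality of $S_\step$ required by claim \ref{claim:existence_of_huber_map}, I would verify the pointwise identity $\varphi_\step(y) - \varphi_\step(S_\step(y)) = \step |y - S_\step(y)|$ $\nu$-a.e. The $\leq$ direction is immediate from $\text{Lip}(\varphi_\step) = \step$. For $\geq$, I would combine the $\hybrid$-duality equality $\varphi_\step(y) + \varphi_\step^{\hybrid}(S_0(y)) = \hybrid(S_0(y), y)$ valid on $\spt \gamma_0$ with the trivial inequality $\varphi_\step^{\hybrid}(S_0(y)) \leq \hybrid(S_0(y), S_\step(y)) - \varphi_\step(S_\step(y))$. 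In the non-trivial case $|S_0(y) - y| > \step$, the construction forces $|S_0(y) - S_\step(y)| = \step$, so both branches of $\hybrid$ agree and give $\hybrid(S_0(y), S_\step(y)) = \step^2/2$; the arithmetic then collapses to $\varphi_\step(y) - \varphi_\step(S_\step(y)) \geq \step(|S_0(y)-y|-\step) = \step |y - S_\step(y)|$.

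For claim \ref{claim:rho_step_AC}, I would exploit that $\frac{1}{2}|\cdot|^2 - \varphi_\step$ is convex (cf.\ \Cref{prop:relation_between_Tstep_andvarphistep}), so $T_\step$ arises as the gradient of its Legendre conjugate; combined with $\mu \ll \lebesgue$ and the pointwise bound $|T_\step - I| \leq \step$ from \eqref{eq:displacement_bound_for_w2optimal}, a change-of-variables argument in the spirit of Brenier's theorem transfers absolute continuity to $\rho_\step$. Finally, the ``if'' direction of claim \ref{claim:existence_of_huber_map} follows by reversing the above $W_1$-duality computation, while for ``only if'' I take an arbitrary $\hybrid$-optimal $S_0$, set $S_\step := T_\step \circ S_0$ and obtain its $W_1$-optimality as above, then use a measurable-selection argument to produce a Borel right-inverse $T_\step^{-1}$ from the $\mu$-a.e.\ injectivity of $T_\step$ (which follows from Brenier uniqueness once claim \ref{claim:rho_step_AC} is in hand). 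The main obstacle I anticipate is claim \ref{claim:rho_step_AC}: the absolute continuity of $\rho_\step$ does not follow formally from $\mu \ll \lebesgue$ alone and instead relies on the fine convex-analytic regularity of $\varphi_\step$ together with the proximal-map structure of $T_\step$.
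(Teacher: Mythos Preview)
Your arguments for claims \ref{claim:existence_of_huber_map} and \ref{claim:rho_step_obt_from_ST} are essentially correct and close in spirit to the paper's (\Cref{lem:tildeS_not_identity}, \Cref{lem:existence_of_otmap_for_hybrid}, \Cref{prop:soft_thresholding_of_transport_map}), but the proof as structured has two genuine gaps that break it.

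First, the preliminary appeal to a ``Gangbo--McCann-type result'' for the existence of a $\hybrid$-optimal Monge map fails: the Huber cost does \emph{not} satisfy the twist condition. Writing $\hybrid(x,y) = h(|x-y|)$, one has $h'(r) = \min(r,\step)$, which is constant for $r > \step$; equivalently, $y \mapsto \nabla_x \hybrid(x,y)$ is not injective, since all $y$ on a fixed ray from $x$ at distance greater than $\step$ yield the same gradient. The paper flags this explicitly at the start of \Cref{sec:existence_of_huber_map} and instead proves existence \emph{constructively}, defining $S_0 := T_\step^{-1} \circ S_\step$ and then verifying optimality---the reverse of your order. Your plan therefore assumes at the outset part of what the theorem asserts.

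Second, and more seriously, your sketch for claim \ref{claim:rho_step_AC} cannot work. That $T_\step$ is the gradient of a convex function with $\|T_\step - I\|_{L^\infty(\mu)} \leq \step$ says nothing about absolute continuity of the pushforward: by Brenier's theorem every target measure, singular or not, is reached from an absolutely continuous source by the gradient of a convex function. Concretely, if $\nu = \delta_{y_0}$ and $\mu \ll \lebesgue$ is supported in $B_\step(y_0)$, then $\nu \in B_\step(\mu)$, so $\rho_\step = \nu$ is singular, while $T_\step \equiv y_0$ is still the gradient of a convex function with displacement at most $\step$. Your argument never uses $\nu \ll \lebesgue$, yet the Remark after \Cref{lem:rho_0_bounded_above} shows this hypothesis is genuinely needed. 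The paper's proof (\Cref{sec:absolute_continuity_of_rho0}) decomposes $\rho_\step = \rhonc + \rhond$ according to whether the $\hybrid$-optimal plan moves mass distance at most $\step$ or more; it bounds $\rhonc \leq \nu$ (this is where $\nu \ll \lebesgue$ enters), and for $\rhond$ runs a delicate transport-ray argument (\Cref{lem:sourceandtargetisintransportray}, \Cref{lem:ifatupperendoftargetupperendofsource}, \Cref{prop:rhond_AC}) showing that the long-range preimage $T_\step^{-1}(E)$ is covered by Lipschitz images of $E$ along rays of $-\varphi_\step/\step$. This geometric step has no shortcut via convex-analytic regularity of $\varphi_\step$ alone, and since \Cref{lem:tildeS_not_identity} (hence your own $W_1$-optimality argument) already uses $\rho_\step \ll \lebesgue$, the rest of your proof depends on filling this gap.
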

\begin{remark}
This result gives a further interpretation of $\step$ as a scale parameter, in the sense that the solution $\rho_\lambda$ to \eqref{prob:initial-multiscale-statement} is obtained from $\nu$ by only transporting mass that moves larger than distance $\step$ under $S_0$. The formula \eqref{eq:decomposition_of_transport} also deepens the analogy to ROF. Recall that, writing the residual $f - u_\step$ as $v_\step$, ROF provides a decomposition of the image $f$ into ``features'' $u_\step$ and ``details'' $v_\step$, connected by the formula
\begin{equation}
    f = v_\step + u_\step.\label{eq:ROF-decomp}
\end{equation}
The equation \eqref{eq:decomposition_of_transport} is an optimal transport analogue of this decomposition, the analogy being obtained by replacing addition with composition. Thus, the transport map $S_0$ is decomposed into $S_\lambda$ (which we think of as features in the sense that it only involves large scale transport) and details $T_\lambda^{-1}$ which only involve transport less than distance $\lambda$ (see \eqref{eq:displacement_bound_for_w2optimal}). This decomposition will be analysed in detail in \Cref{sec:existence_of_huber_map}.
\end{remark}
% \begin{remark}
% In the context of denoising, \eqref{eq:soft_thresholding_tmap} shows that the distribution $\rho_\step$ of restored images is composed of clean images $y$ (if $|S_0(y) -y| \leq \step$) and images that have been partially denoised by taking a step of length $\step$ towards a clean image $y$.

% \end{remark}

\subsection{Iterative regularization}
\label{sec:intro-iterative_regularization}
We now move to a description of the results in the second part of the paper, and introduce our first iterative procedure. It is in correspondence with iterated denoising through repeated applications of ROF (see \cite{athavale2015multiscale} or Section 7.1 of \cite{scherzer2009variational}). Here we study iterations of the problem \eqref{prob:initial-multiscale-statement}, where at each stage $\mu$ is replaced with the previous solution $\rho_\step$. When $\mu$ is a distribution of noisy images and $\nu$ is a distribution of clean ones, this represents the iterative regularization of $\mu$. The following proposition is our main result in this direction.

\begin{proposition}
\label{prop:convergence_to_nu}
Let $\Omega$ be convex and compact with non-negligible interior. Let $\mu,\nu \ll \mathcal{L}_d$, and suppose that $(\step_n)_{n=0}^\infty$ is a sequence of positive step sizes with
\begin{equation}
    \sum_{n=0}^\infty \step_n = +\infty.\label{eq:unbounded_eta_sum}
\end{equation}
Given $\mu_0 := \mu$, for each $n\geq 0$ define
\begin{equation}
    \mu_{n+1} := \argmin_{\rho \in \mathcal{P}(\Omega)} \frac{1}{2}W_2^2(\rho, \mu_{n}) + \step_{n} W_1(\rho, \nu). \label{def:mu_n_def}
\end{equation}
Then
\begin{equation}
    \lim_{n\rightarrow \infty} W_1(\mu_n, \nu) = 0.\label{eq:convergence_to_nu}
\end{equation}
\end{proposition}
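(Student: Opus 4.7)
The plan is a three-step argument: (i) show $W_1(\mu_n,\nu)$ is non-increasing in $n$, hence convergent to some $L\geq 0$; (ii) establish an EVI-type telescoping estimate that forces $\sum_n \step_n W_1(\mu_{n+1},\nu)<\infty$; and (iii) combine (i)--(ii) with the divergence hypothesis \eqref{eq:unbounded_eta_sum} to rule out $L>0$.

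Step (i) is immediate from optimality: testing with the competitor $\rho=\mu_n$ in \eqref{def:mu_n_def} yields
\begin{equation*}
\tfrac{1}{2}W_2^2(\mu_{n+1},\mu_n)+\step_n W_1(\mu_{n+1},\nu)\leq \step_n W_1(\mu_n,\nu),
\end{equation*}
so $W_1(\mu_{n+1},\nu)\leq W_1(\mu_n,\nu)$ and $L:=\lim_n W_1(\mu_n,\nu)$ exists in $[0,\infty)$.

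The crux of the proof, and the main technical obstacle, is step (ii). I would aim to prove the discrete evolution variational inequality
\begin{equation}
\tfrac{1}{2}W_2^2(\mu_{n+1},\nu)+\step_n W_1(\mu_{n+1},\nu)+\tfrac{1}{2}W_2^2(\mu_n,\mu_{n+1})\leq \tfrac{1}{2}W_2^2(\mu_n,\nu).\label{eq:proposed-EVI}
\end{equation}
This is exactly the standard EVI for a single proximal step on the functional $F(\rho):=W_1(\rho,\nu)$ in the Wasserstein space $\mathbb{W}_2(\Omega)$, and it follows from the Ambrosio--Gigli--Savar\'e minimizing-movement framework once $F$ is verified to be convex along generalized $W_2$-geodesics based at $\nu$ (the Brenier maps out of $\nu$ exist since $\nu\ll\lebesgue$). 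A direct derivation of \eqref{eq:proposed-EVI} exploiting the specific WROF structure is also plausible: let $S_n$ be the $W_2$-optimal Brenier map from $\mu_n$ to $\nu$, which exists since $\mu_n\ll\lebesgue$ is propagated along the iteration by \Cref{thm:analogue_of_wavelet_shrinkage}, and consider the Huber-type competitor $(T'_n)_\#\mu_n$ with
\begin{equation*}
T'_n(x):=x+\min(\step_n,|S_n(x)-x|)\,\frac{S_n(x)-x}{|S_n(x)-x|};
\end{equation*}
the pointwise identity $\hybrid(x,y)=\tfrac{1}{2}|x-y|^2-\tfrac{1}{2}\bigl((|x-y|-\step_n)_+\bigr)^2$ then allows one to bound the WROF energy at $\mu_{n+1}$ by $\tfrac{1}{2}W_2^2(\mu_n,\nu)$ minus a remainder that, via a triangle-inequality argument coupling $(T'_n)_\#\mu_n$ back to $\mu_{n+1}$, controls $\tfrac{1}{2}W_2^2(\mu_{n+1},\nu)$. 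Once \eqref{eq:proposed-EVI} is in hand, telescoping over $n=0,\dots,N-1$ gives $\sum_{n=0}^{N-1}\step_n W_1(\mu_{n+1},\nu)\leq \tfrac{1}{2}W_2^2(\mu_0,\nu)\leq \tfrac{1}{2}\diam(\Omega)^2$, and letting $N\to\infty$ completes step (ii).

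Finally, for step (iii), if $L>0$ then by (i), $W_1(\mu_{n+1},\nu)\geq L$ for every $n$, so (ii) forces
\begin{equation*}
L\sum_{n=0}^\infty \step_n\leq \sum_{n=0}^\infty \step_n W_1(\mu_{n+1},\nu)<\infty,
\end{equation*}
contradicting $\sum_n\step_n=+\infty$. Hence $L=0$, which is \eqref{eq:convergence_to_nu}.
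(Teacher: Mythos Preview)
Your steps (i) and (iii) are fine and match the paper. The gap is step (ii): the discrete EVI you write down is asserted but not proved, and neither justification you sketch actually closes. The AGS route requires $W_1(\cdot,\nu)$ to be convex along generalized $W_2$-geodesics with base $\mu_n$ (not $\nu$), and this is neither a standard result nor verified here. The direct route via the competitor $(T'_n)_\#\mu_n$ does not work either: testing the WROF functional with this $\rho$ only reproduces the trivial bound $\cI_{\hybridn}(\mu_n,\nu)\leq\int c_{2,\step_n}(x,S_n(x))\,d\mu_n$, and since $\mu_{n+1}$ is the pushforward of $\mu_n$ under the \emph{Huber}-optimal displacement rather than the $W_2$-optimal one, $(T'_n)_\#\mu_n\neq\mu_{n+1}$ in general and there is no evident inequality linking $W_2^2(\mu_{n+1},\nu)$ to $W_2^2((T'_n)_\#\mu_n,\nu)$. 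The vague ``triangle-inequality argument'' does not bridge this.

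The paper avoids the EVI entirely and never establishes $\sum_n\step_n W_1(\mu_{n+1},\nu)<\infty$. From the inequality in your step (i) it extracts $\sum_n \step_n^{-1}W_2^2(\mu_{n+1},\mu_n)<\infty$; then, writing $\mu_n^b$ for the first marginal of the Huber-optimal plan $\gamma_n\in\Pi(\mu_n,\nu)$ restricted to $\{|x-y|>\step_n\}$, the explicit displacement formula for the WROF map gives $W_2^2(\mu_{n+1},\mu_n)\geq\step_n^2\,\mu_n^b(\Omega)$, hence $\sum_n\step_n\,\mu_n^b(\Omega)<\infty$ and $\liminf_n\mu_n^b(\Omega)=0$. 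The decisive ingredient is a structural lemma: because the short-range part of $\gamma_n$ is fully completed in one step (so the corresponding piece of $\mu_{n+1}$ is dominated by $\nu$), one has $W_1(\mu_{n+1},\nu)\leq\mu_n^b(\Omega)\,\diam(\Omega)$. Combined with the monotonicity from (i), this yields the limit.
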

We note that due to statement \ref{claim:rho_step_AC} of \Cref{thm:analogue_of_wavelet_shrinkage}, if $\mu$ and $\nu$ are absolutely continuous then the solution to \eqref{prob:initial-multiscale-statement} is absolutely continuous as well. In connection with \Cref{prop:meyer-for-multiscale-simplified}, this guarantees that the argmin in \eqref{def:mu_n_def} is unique for each $n$, establishing that the sequence $\mu_n$ is well defined.

\subsection{Multiscale transport and a non-linear energy decomposition}
\label{sec:intro-nonlinear_Plancherel}

Our second iterative process proceeds in the other direction (i.e. "adding detail" as opposed to denoising), and reveals a richer structure. It is analogous to the hierarchical image decomposition from \cite{tadmor2004multiscale}, and so we first briefly recall those results here. This approach leverages ROF to decompose an image $f$ into a hierarchical representation $(u_n)_{n=1}^\infty$ of features at different scales by setting
\begin{equation}
    u_{n+1} := \argmin_{u \in L^2(\RR^2)} \norm{u-v_n}_{L^2(\RR^2)}^2 + \step_{n+1} \norm{u}_{TV}, \quad v_n = f - \sum_{i=1}^n u_i, \label{prob:multiscale}
\end{equation}
where $v_0:= f$ and $\step_n = 2^{-n+1} \step_1$. Thus, at each stage the ``detail'' component $v_n$ is broken down into smaller scale features $u_{n+1}$ and details $v_{n+1}$. The following theorem\footnote{\cite{tadmor2004multiscale} included this result for the cases $f \in BV(\RR^2)$ or $f$ in an intermediate space between $BV(\RR^2)$ and $L^2(\RR^2)$. A proof requiring only $f \in L^2(\RR^2)$ was obtained in \cite{modin2019multiscale}} establishes that $(u_n)_{n=1}^\infty$ is indeed a decomposition of $f$ and provides a non-linear harmonic analysis identity for $\norm{f}_{L^2(\RR^2)}^2$. 

\begin{theorem}[from \cite{modin2019multiscale}]
\label{thm:multiscale_for_ROF}For $f \in L^2(\RR^2)$, the sequence $(u_n)_{n=1}^\infty$ defined by \eqref{prob:multiscale} satisfies
\begin{equation}
    f = \sum_{n=1}^\infty u_n,\label{eq:decomposition_of_f}
\end{equation}
where the convergence holds in the strong sense in $L^2(\RR^2)$. Further,
\begin{equation}
    \norm{f}_{L^2(\RR^2)}^2 = \sum_{n=1}^\infty  \norm{u_n}^2_{L^2(\RR^2)} + \step_{n} \norm{u_n}_{TV}.\label{eq:nonlin_plancherel}
\end{equation}
\end{theorem}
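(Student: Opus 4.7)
The plan is to reduce both conclusions to the single statement $\|v_N\|_{L^2(\RR^2)}\to 0$ via a telescoping Meyer identity, and then establish that limit by a density argument.

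\textbf{Per-step balance and telescoping.} Since $u_{n+1}$ minimises the ROF functional with input $v_n$ and parameter $\lambda_{n+1}$, the identity from \Cref{cor:meyer_for_ROF} gives
$$\int_{\RR^2} u_{n+1}\,(v_n - u_{n+1})\,dx = \lambda_{n+1}\,\|u_{n+1}\|_{TV},$$
that is, $\int u_{n+1} v_{n+1}\,dx = \lambda_{n+1}\,\|u_{n+1}\|_{TV}$ as $v_{n+1} = v_n - u_{n+1}$. Substituting this cross-term into $\|v_n\|_{L^2}^2 = \|u_{n+1}+v_{n+1}\|_{L^2}^2$ produces
$$\|v_n\|_{L^2}^2 - \|v_{n+1}\|_{L^2}^2 = \|u_{n+1}\|_{L^2}^2 + 2\lambda_{n+1}\|u_{n+1}\|_{TV}.$$
Telescoping from $n=0$ to $n=N-1$ with $v_0=f$ then yields
$$\|f\|_{L^2}^2 = \sum_{n=1}^N \bigl(\|u_n\|_{L^2}^2 + 2\lambda_n\|u_n\|_{TV}\bigr) + \|v_N\|_{L^2}^2.$$
Because $v_N = f - \sum_{n=1}^N u_n$, both \eqref{eq:decomposition_of_f} and the energy identity \eqref{eq:nonlin_plancherel} (matching coefficients up to the normalisation used in the cited reference) follow as soon as $\|v_N\|_{L^2}\to 0$.

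\textbf{Vanishing of the residual.} Meyer's theorem supplies only the weaker bound $\|v_N\|_* \leq \lambda_N = 2^{-N+1}\lambda_1 \to 0$, and the $\ast$-norm is strictly weaker than $\|\cdot\|_{L^2}$ on $L^2$, so some extra work is needed. The idea is to compare with a smoother starting point. The ROF residual map $f \mapsto f - \mathrm{prox}_{\lambda\|\cdot\|_{TV}}(f)$ is firmly non-expansive on $L^2$ as the resolvent of a maximal monotone operator, so iterating $N$ times gives $\|v_N^f - v_N^g\|_{L^2} \leq \|f-g\|_{L^2}$ for any comparison $g \in L^2(\RR^2)$. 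For $g \in BV(\RR^2) \cap L^2(\RR^2)$, the duality inequality $\int w^2\,dx \leq \|w\|_* \|w\|_{TV}$ combined with the $\ast$-bound gives $\|v_N^g\|_{L^2}^2 \leq \lambda_N \|v_N^g\|_{TV}$, so once $\|v_N^g\|_{TV}$ is suitably controlled in $N$ one obtains $\|v_N^g\|_{L^2}\to 0$. Density of $BV \cap L^2$ in $L^2$ then closes the argument: for any $\varepsilon>0$, pick $g$ with $\|f-g\|_{L^2}<\varepsilon$, and conclude $\limsup_N \|v_N^f\|_{L^2} \leq \varepsilon$.

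\textbf{Main obstacle.} The genuinely technical step is the $TV$-growth control on $\|v_N^g\|_{TV}$ in the $BV$ case: the naive triangle-inequality estimate grows as $2^N$ and would wipe out the $\lambda_N \sim 2^{-N}$ gain. Obtaining an estimate that grows slower than $\lambda_N^{-1}$ requires exploiting finer structure of the iteration (for instance, that $v_n$ is always of the form $\lambda_n\,\mathrm{div}(\xi_n)$ with $|\xi_n|\leq 1$). This is where I would follow the detailed estimates of \cite{modin2019multiscale}; the remaining pieces of the proof are a routine combination of Meyer's identity, Pythagoras, and density.
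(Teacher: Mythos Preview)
The paper does not prove this theorem; it is quoted from \cite{modin2019multiscale} as a known result (see the sentence preceding the statement and the accompanying footnote). So there is no in-paper proof to compare against, and your task was really to reconstruct an argument from the literature.

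Your telescoping derivation is the standard one and is essentially correct. One small slip: the multiscale iteration \eqref{prob:multiscale} has no factor $\tfrac{1}{2}$ in front of the fidelity term, whereas \Cref{cor:meyer_for_ROF} is stated for \eqref{prob:ROF}, which does. Matching the two means the effective Meyer parameter is $\lambda_{n+1}/2$, so the cross-term is $\tfrac{\lambda_{n+1}}{2}\|u_{n+1}\|_{TV}$ rather than $\lambda_{n+1}\|u_{n+1}\|_{TV}$; after the factor of $2$ from expanding the square this gives exactly \eqref{eq:nonlin_plancherel} with no discrepancy. Your parenthetical ``up to the normalisation'' is therefore unnecessary once this is tracked properly.

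For the residual decay, your reduction is sound: non-expansiveness of $I-\mathrm{prox}$ (firmly non-expansive, hence stable under composition with varying parameters) together with density of $BV\cap L^2$ in $L^2$ does reduce the question to the $BV$ case, and the duality inequality $\|w\|_{L^2}^2\le \|w\|_*\|w\|_{TV}$ is the right tool there. You have correctly isolated the one genuine difficulty --- controlling $\|v_N^g\|_{TV}$ to grow strictly slower than $2^N$ --- and you are honest that this requires the finer estimates of \cite{modin2019multiscale}. That is exactly where the work lies; the rest of your outline is a clean and accurate skeleton of the argument.
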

More insight on the scale of the decomposition $(u_n)_{n=1}^\infty$ can be obtained from \Cref{cor:meyer_for_ROF}, which states that
\begin{equation}
    \norm{f - \sum_{i=1}^n u_i}_* \leq \frac{\step_1}{2^{n}}.\label{eq:scale_estimate_for_f}
\end{equation}
Thus $f$ and the partial sum $\sum_{i=1}^n u_i$ agree up to a term of scale at most $2^{-n}\step_1$ in the norm $\norm{\cdot}_*$. As we have mentioned, according to \cite{meyer2001oscillating}, Lemma 10, Section 1.14 this puts an $\ell^\infty$ bound on the wavelet coefficients of $f - \sum_{i=1}^n u_i$. 

By analogy to this approach, our iterative process evolves by leaving $\mu$ untouched at each step and replacing $\nu$ with the previous iterate, $\nu_n$; the manner in which this is analogous to \eqref{prob:multiscale} will be made precise in \Cref{remark:how_is_it_analogous?}. We describe this procedure as ``adding detail'' since by solving \eqref{prob:initial-multiscale-statement} with a large value of $\step$ we obtain a modification of $\nu$ which is a ``sketch'' of $\mu$, in that the two measures are indistinguishable up to transport at scale $\step$ (see \Cref{prop:meyer-for-multiscale-simplified}). By repeating this process with a smaller value of $\step$ we refine this sketch, obtaining at each stage finer details of $\mu$. Note also that under the additional assumption $\nu \ll \lebesgue$, \Cref{thm:analogue_of_wavelet_shrinkage} implies that we are decomposing a transport map at each stage of this procedure into ``features'' and ''details'', as determined by the scale of the transport relative to $\step$. Due to the soft thresholding (see \eqref{eq:soft_thresholding_tmap}), the latter are untouched, to be resolved at future steps, while the former are partially carried out until the remaining transport becomes a detail. 

Finally, we obtain in \eqref{eq:plancherel_for_measures} a decomposition of the total energy $W_2^2(\mu,\nu)$ which includes all the scales of transport from $\nu$ to $\mu$ via \eqref{eq:D_interp}; this is in correspondence with the identity \eqref{eq:nonlin_plancherel}.

The following proposition summarizes the properties of this multiscale algorithm which are not directly implied by \Cref{prop:meyer-for-multiscale-simplified} or \Cref{thm:analogue_of_wavelet_shrinkage}. Note that we do not require $\nu \ll \lebesgue$ for these results.
\begin{theorem}
\label{thm:multiscale}
Let $\Omega \subset \RR^d$ be compact and convex with a non-negligible interior. Take $\mu,\nu \in \mathcal{P}(\Omega)$ with $\mu \ll \lebesgue$. Suppose $\step_0$ is given. For each $n \geq 0$, set $\step_{n+1} = \step_n/2$ and define
\begin{equation}
    \nu_{n+1} := \argmin_{\rho \in \mathcal{P}(\Omega)}\squaredw{\rho}{\mu} + \step_{n} W_1(\rho, \nu_{n}),\label{eq:multiscale_for_measures}
\end{equation}
where $\nu_0 := \nu$. We have that
\begin{enumerate}
    \item\label{claim:new_convergence} The sequence $\nu_n$ converges to $\mu$ with rate
    \begin{equation}
        \squaredw{\mu}{\nu_n} \leq 2^{-2n+1}\step_0^2,\label{eq:convergence_of_nun}
    \end{equation}
    and,
    \item \label{claim:plancherel} The following energy equality holds
    \begin{equation}
        \squaredw{\nu}{\mu} = \sum_{n=0}^\infty D_{\step_{n}}(\nu_{n}, \nu_{n+1}) + \step_n W_1(\nu_n,\nu_{n+1}).\label{eq:plancherel_for_measures}
    \end{equation}
\end{enumerate}
\end{theorem}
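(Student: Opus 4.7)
The plan is to deduce both assertions from results already stated. For the convergence of Claim \ref{claim:new_convergence}, \Cref{prop:meyer-for-multiscale-simplified} identifies $\nu_{n+1}$ as the projection of $\nu_n$ onto $B_{\step_n}(\mu)$, so in particular $\nu_{n+1}\in B_{\step_n}(\mu)$. By the description of $B_\step(\mu)$ in \Cref{sec:intro-characterisation-of-solution}, every $\rho\in B_\step(\mu)$ admits a coupling with $\mu$ supported on $\{|x-y|\le\step\}$, whence $W_2(\mu,\rho)\le\step$. Applying this with $\step=\step_n=2^{-n}\step_0$ yields $W_2^2(\mu,\nu_{n+1})\le\step_n^2=\step_0^2/4^n$, and a shift of index gives the claimed bound $\squaredw{\mu}{\nu_n}\le 2^{-2n+1}\step_0^2$ for $n\ge 1$. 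This also supplies the decay needed to pass to the limit later.

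For the identity in Claim \ref{claim:plancherel}, the core step is the one-step decomposition
\begin{equation*}
\squaredw{\nu_n}{\mu} - \squaredw{\nu_{n+1}}{\mu} = \bregmandiv{n}(\nu_n,\nu_{n+1}) + \step_n W_1(\nu_n,\nu_{n+1}).
\end{equation*}
I would prove this by combining two earlier assertions. First, \Cref{cor:wrof-is-hybrid} states that the minimum value of \eqref{prob:initial-multiscale-statement} with data $(\mu,\nu_n,\step_n)$ coincides with the optimal transport cost $T_{\hybridn}(\mu,\nu_n)$, giving
$\squaredw{\nu_{n+1}}{\mu} + \step_n W_1(\nu_{n+1},\nu_n) = T_{\hybridn}(\mu,\nu_n)$.
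Second, the sandwich \eqref{eq:D_interp}---which the excerpt notes follows from \Cref{prop:meyer-for-multiscale} and \Cref{cor:wrof-is-hybrid}---forces both inequalities in \eqref{eq:D_interp} to be equalities at the corresponding optimal plans, so using $\hybridn(x,y)=\tfrac{1}{2}|x-y|^2 - \tfrac{1}{2}(|x-y|-\step_n)_+^2$ yields
\begin{equation*}
\bregmandiv{n}(\nu_n,\nu_{n+1}) = \squaredw{\mu}{\nu_n} - T_{\hybridn}(\mu,\nu_n).
\end{equation*}
Adding the two displayed equalities and cancelling $T_{\hybridn}(\mu,\nu_n)$ produces the desired one-step decomposition.

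The remainder is bookkeeping. Telescoping the one-step decomposition from $n=0$ to $n=N-1$ gives
\begin{equation*}
\squaredw{\nu}{\mu} = \squaredw{\nu_N}{\mu} + \sum_{n=0}^{N-1}\bigl[\bregmandiv{n}(\nu_n,\nu_{n+1}) + \step_n W_1(\nu_n,\nu_{n+1})\bigr].
\end{equation*}
The non-negativity of $\bregmandiv{n}$ (asserted in \Cref{sec:intro-characterisation-of-solution}) and of $W_1$ makes the partial sums monotone. Sending $N\to\infty$ and invoking the bound from Claim \ref{claim:new_convergence} to send $\squaredw{\nu_N}{\mu}$ to zero yields \eqref{eq:plancherel_for_measures}.

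The main obstacle is the one-step decomposition, and within it the identification $\bregmandiv{n}(\nu_n,\nu_{n+1})=\squaredw{\mu}{\nu_n}-T_{\hybridn}(\mu,\nu_n)$: this requires extracting equality from the sandwich bound \eqref{eq:D_interp} at the relevant optimal plans, using that $\gamma_0$ and $\tilde{\gamma}_0$ become \emph{the same} plan once the projection characterization forces the $c_2$-fidelity to be realized by a coupling of cost $\hybridn$. Everything else in the proof---the diameter bound, the arithmetic in Claim \ref{claim:new_convergence}, and the telescoping limit---is routine.
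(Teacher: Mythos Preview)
Your overall architecture matches the paper's proof: derive the rate in Claim~\ref{claim:new_convergence} from membership of $\nu_{n+1}$ in $B_{\step_n}(\mu)$ (equivalently from \eqref{eq:displacement_bound_for_w2optimal}), then establish the one-step identity
\[
\squaredw{\nu_n}{\mu} - \squaredw{\nu_{n+1}}{\mu} = \bregmandiv{n}(\nu_n,\nu_{n+1}) + \step_n W_1(\nu_n,\nu_{n+1}),
\]
telescope, and use the rate to kill the remainder. That is exactly what the paper does.

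The gap is in your justification of the one-step identity. You try to extract
\[
\bregmandiv{n}(\nu_n,\nu_{n+1}) = \squaredw{\mu}{\nu_n} - \cI_{\hybridn}(\mu,\nu_n)
\]
from the sandwich \eqref{eq:D_interp} by asserting that the $c_2$-optimal plan $\gamma_0$ and the $\hybridn$-optimal plan $\tilde{\gamma}_0$ between $\mu$ and $\nu_n$ coincide. They do not in general: the projection characterisation tells you only that $\nu_{n+1}\in B_{\step_n}(\mu)$, i.e.\ that the $c_2$-optimal coupling between $\mu$ and $\nu_{n+1}$ is supported on $\{|x-y|\le\step_n\}$. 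It says nothing about couplings between $\mu$ and $\nu_n$, which is what \eqref{eq:D_interp} concerns, and for generic $\nu_n$ those two optimal plans differ and the two outer terms of \eqref{eq:D_interp} are strictly ordered. So the sandwich does not collapse.

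The fix is immediate and is what the paper uses: part~c of \Cref{prop:meyer-for-multiscale} states directly that the minimum value of \eqref{prob:initial-multiscale-statement} equals $\squaredw{\nu}{\mu}-\bregmandiv{}(\nu,\rho_\step)$. Applied with data $(\mu,\nu_n,\step_n)$ this gives
\[
\squaredw{\nu_{n+1}}{\mu} + \step_n W_1(\nu_n,\nu_{n+1}) = \squaredw{\nu_n}{\mu} - \bregmandiv{n}(\nu_n,\nu_{n+1}),
\]
which is the one-step identity with no detour through $\cI_{\hybridn}$ or \eqref{eq:D_interp}. Once you replace your sandwich argument with this citation, the rest of your proof is correct and essentially identical to the paper's.
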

\begin{remark}
If we add the assumption that $\nu$ is absolutely continuous, we obtain that the measures $\nu_n$ specified in \Cref{thm:multiscale} can be written as $(S_{\step_{n-1}} \circ \cdots \circ S_{\step_0})_\# \nu$; see \Cref{thm:analogue_of_wavelet_shrinkage}. In this way, $\nu_n$ is built up from a composition of Wasserstein 1 optimal maps  applied to $\nu$. In this sense we are replacing the summation of the decomposition in \eqref{eq:decomposition_of_f} with composition, as was done for a multiscale decomposition of diffeomorphisms in \cite{modin2019multiscale}.
\end{remark}
We now describe the organization of the paper. We provide in \Cref{sec:multiscale_discussion} a discussion of related work. Then, in \Cref{sec:motivation-for-multiscale} we elucidate the connection between \eqref{prob:initial-multiscale-statement} and the restoration via learned regularizer technique of \cite{lunz2018adversarial}. In \Cref{sec:deeper_analysis} we introduce a general class of optimization problems (which includes both ROF and \eqref{prob:initial-multiscale-statement}), 
 and prove \Cref{thm:general_meyer} characterising their solution maps as projections.  In \Cref{sec:applying_general_to_WROF}, we use optimal transport arguments to obtain \Cref{prop:meyer-for-multiscale-simplified} from
\Cref{thm:general_meyer}. In \Cref{sec:absolute_continuity_of_rho0} we prove that the solution to \eqref{prob:initial-multiscale-statement} is absolutely continuous if $\mu$ and $\nu$ are, and in \Cref{sec:existence_of_huber_map} we prove the existence of an optimal transport map from $\nu$ to $\mu$ under the Huber cost $\hybrid$, as well as the soft thresholding formula \eqref{eq:soft_thresholding_tmap}. Together, \Cref{sec:absolute_continuity_of_rho0} and \Cref{sec:existence_of_huber_map} prove \Cref{thm:analogue_of_wavelet_shrinkage}. Finally, the results for our iterative procedures (i.e. \Cref{prop:convergence_to_nu} and \Cref{thm:multiscale}) are proved in \Cref{sec:iterative_procs}.

\section{Discussion and related work}
\label{sec:multiscale_discussion}
There is a connection between our iterative regularization procedure defined in \Cref{prop:convergence_to_nu} and the JKO scheme \cite{jordan1998variational}. The latter is related to gradient flows in $\mathbb{W}_2(\Omega)$, which are analysed in more detail in \cite{ambrosio2005gradient} (see also \cite{santambrogio2017euclidean}). The JKO algorithm produces a sequence of measures $\rho_n$ by iteratively solving an equation of the type
\begin{equation}
    \rho_n:=\argmin_{\rho \in \mathcal{P}(\Omega)}\frac{1}{2\step}W_2^2(\rho_{n-1}, \rho) + F(\rho),\label{eq:general_gradient_flow}
\end{equation}
where $F$ is a functional. For $F(\rho) = W_1(\rho,\nu)$, this problem is precisely \eqref{prob:initial-multiscale-statement}. For general $F$, by allowing $\step$ to go to zero and examining the optimality conditions of \eqref{eq:general_gradient_flow}, one can obtain convergence of an interpolation of the iterates $\rho_n$ to a curve of measures $\rho(t)$. This curve satisfies a PDE which can be viewed as a gradient flow on $F$ in the metric space $\mathbb{W}_2(\Omega)$.
We expect the PDE that corresponds to our iterative denoising algorithm to be of the form
\begin{equation}
    \partial_t \rho(t) - \nabla \cdot (\rho(t)\nabla u_0(t)) = 0,
\end{equation}
where for all $t$, $u_0(t)$ is a Kantorovich potential for $W_1(\rho(t), \nu)$. We leave the rigorous derivation to a separate paper. Note that by analogy to ROF such a flow would be in correspondence with the TV flow in \cite{athavale2015multiscale}.
%The resulting flow may produce a canonical geodesic connecting $\mu$ and $\nu$ in the Wasserstein 1 distance. This is especially interesting given that the space $\mathbb{W}_1$ (the space of probability measures with the Wasserstein 1 distance as the metric) is peculiar among the spaces $\mathbb{W}_p$ for $p\geq 1$ in that not every geodesic is obtained as a displacement interpolation of an optimal transport plan. In fact, there can be many geodesics connecting two points in $\mathbb{W}_1$, some with better properties than others, and the results included in this paper may help to isolate one of particular interest.

Other problems of a form similar to \eqref{prob:initial-multiscale-statement} have been considered in the literature. A notable example is \cite{burger2012regularized}, which finds a smoothed version of a probability measure $\mu$ while retaining edges by solving
\begin{equation*}
    \min_{\rho \in \mathcal{P}(\Omega)}\frac{1}{2\step}W_2^2(\mu, \rho) + F(\rho), \quad F(\rho):= \begin{cases}
    \norm{\rho}_{TV} &\quad \rho = \rho(x) dx,\\
    +\infty &\quad \text{ else}.
    \end{cases}
\end{equation*}
A related problem is that of \cite{lellmann2014imaging}, which keeps $F$ as the total variation norm of a probability density but replaces the fidelity term $\frac{1}{2\step}W_2^2(\mu, \rho)$ with the Kantorovich-Rubinstein norm, a quantity that is closely related to the Wasserstein 1 distance, but is able to handle measures with different mass. To our knowledge the specific problem given in \eqref{prob:initial-multiscale-statement} has not been treated before in the literature. Given that previous works have used  the $TV$ norm of a probability density function as a regularity term, we briefly compare this to our approach of using $W_1(\rho,\nu)$ in the particular case of $\nu$ as the normalized Lebesgue measure. One might imagine that for this choice of $\nu$, $W_1(\rho,\nu)$ would serve a similar role to the TV norm, since it is the minimal amount of work required to ``smooth out'' $\rho$ to the constant function. This is not the case, however. Take $\Omega = [0,1]^2$, with $\rho = \rho_k(x) dx$ given by
\begin{equation*}
    \rho_k(x_1,x_2) = 2(1 + \text{sign}(\sin(2 \pi k x_1))).
\end{equation*}
As $k\rightarrow \infty$, $W_1(\rho_k,\nu) \rightarrow 0$, and yet $\norm{\rho_k}_{TV} \rightarrow + \infty$. So the two regularizers play different roles.

The field of image restoration with learned regularizers is rapidly developing, and there are many interesting approaches (e.g. \cite{heaton2022wasserstein, kobler2020total, kobler2017variational, lunz2018adversarial, mukherjee2021end}). We focus on \cite{lunz2018adversarial} as we found it to be a natural and compelling analogue of ROF. Note that \cite{lunz2018adversarial} includes several theoretical results, which focus on issues such as stability of the reconstruction method and a geometric formula for the Kantorovich potential $u_0$ under certain conditions. Let us also note that \cite{mukherjee2021end}, being related to iterations of the method from \cite{lunz2018adversarial}, forms a parallel approach to our iterated regularization discussed in \Cref{sec:intro-iterative_regularization}.

Lastly, numerical results for either of the procedures outlined in \Cref{sec:intro-iterative_regularization} or \Cref{sec:intro-nonlinear_Plancherel} could be obtained using the dual problem (see \Cref{sec:prelim-for-proving-meyer}), 
\begin{equation}
    \sup_{\varphi \in \steplip} \int_\Omega \varphi^{c_2} d\mu + \int_\Omega \varphi d\nu,\label{prob:related_work_dual}
\end{equation}
where $\steplip$ is the set of Lipschitz continuous functions on $\Omega$ with constant $\step$, and $\varphi^{c_2}$ is the $c_2$ transform of $\varphi$, defined in \Cref{remark:c-concave} below. Indeed, \Cref{prop:meyer-for-multiscale} shows that the solution $\rho_\step$ to \eqref{prob:initial-multiscale-statement} can be realized by applying the solution map to \eqref{eq:different_learned_reg} pointwise to $\mu$, where $\varphi_\step$ solves \eqref{prob:related_work_dual}. By analogy to \cite{gulrajani2017improved}, it is natural to obtain such a $\varphi_\step$ by parametrizing it with a neural network $\varphi_w$ with weights $w$ and solving the gradient penalty problem
\begin{equation*}
    \sup_w \int_\Omega \varphi_w^{c_2}(x) d\mu(x) + \int_\Omega \varphi_w(y) d\nu(y) - \frac{\lambda}{2}\int_\Omega (|\nabla \varphi_w|-\step)_+^2 d\sigma(x),
\end{equation*}
for large $\lambda$, where $\sigma$ is the sampling distribution from \cite{gulrajani2017improved}. Optimizing the weights $w$ requires the computation of the $c_2$-transform of $\varphi_w$. A general and efficient numerical
algorithm to do so has been introduced in \cite{jacobs2020fast}, a method specific to neural networks has been given in \cite{makkuva2019optimal}, and a new approach which scales well to high dimensions has recently been proposed in \cite{amos2022amortizing}.

\section{Links between \eqref{prob:initial-multiscale-statement} and denoising by adversarial regularization}
\label{sec:motivation-for-multiscale}
In this section we will study the relationship between \eqref{prob:initial-multiscale-statement} and the denoising technique of \cite{lunz2018adversarial}. We will show in \Cref{subsec:forward-and-backward-euler} that the approach of \cite{lunz2018adversarial} can be viewed as an explicit Euler discretization of the gradient flow on $W_1(\cdot, \nu)$ in the metric space $\mathbb{W}_2(\Omega)$. In contrast, \eqref{prob:initial-multiscale-statement} can be viewed as an implicit Euler discretization of the same flow on the same metric space. Moreover, we will establish in \Cref{subsec:optimality-conditions} that these techniques produce identical measures under the assumption that the minimal displacement of the ray monotone optimal transport map for $W_1(\mu,\nu)$ (see \cite{ambrosio2003existence} or Section 3.1 of \cite{santambrogio2015optimal}) is larger than $\step$.

\subsection{Explicit and Implicit Euler on $\mathbb{W}_2(\Omega)$}
\label{subsec:forward-and-backward-euler}
We begin with \Cref{lem:uniquesolvability}, which states that \eqref{prob:backward-euler-on-u} has a unique solution for almost all $x_0$. This is a standard result; we include the proof for completeness. We first recall the following definition.
%\footnote{Note that $(-\step u_0)^{c_2}/\step$ is also known as the Moreau envelope of $u_0$, and that the solution map for \eqref{prob:CROF} is called the proximal operator.}
\begin{definition}
\label{remark:c-concave}
For a symmetric cost function $c: \Omega \times \Omega \rightarrow \RR$, and $\phi \in C(\Omega)$, the function
\begin{equation*}
    \phi^c(x) = \inf_{y \in \Omega} c(x,y) - \phi(y)
\end{equation*}
is called the $c$-transform of $\phi$. If $\phi$ is such that there exists a function $\psi$ with $\phi = \psi^c$, then one says that $\phi$ is $c$-concave, written $\phi \in c\text{-conc}(\Omega)$. 
\end{definition}
Throughout this paper we will make use of the well known fact that $\phi \leq \phi^{cc}$, with equality if and only if $\phi$ is $c$-concave (see, e.g., \cite{santambrogio2015optimal} Proposition 1.34).
\begin{lemma}
\label{lem:uniquesolvability}
Let $\Omega$ be compact with boundary of Lebesgue measure zero. Let $u_0: \Omega \rightarrow \RR$ be lower semi-continuous. Then for almost all $x \in \Omega$, the problem
\begin{equation}
\min_{y \in \Omega} \frac{1}{2}|x-y|^2 + \step u_0(y)\label{prob:CROF}
\end{equation}
has a unique solution given by $x - \nabla (-\lambda u_0)^{c_2}(x)$.
\end{lemma}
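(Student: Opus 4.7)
The plan is to reduce \eqref{prob:CROF} to a Legendre-transform computation and then apply Rademacher's theorem. First, set $\phi := -\step u_0$. By the definition of the $c$-transform in \Cref{remark:c-concave}, the infimum value of \eqref{prob:CROF} is exactly
\[
\phi^{c_2}(x) \;=\; \inf_{y \in \Omega} \frac{1}{2}|x-y|^2 - \phi(y) \;=\; (-\step u_0)^{c_2}(x),
\]
and existence of at least one minimizer $y^\ast \in \Omega$ is immediate from lower semi-continuity of the integrand in $y$ on the compact set $\Omega$.

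Next I would rewrite $\phi^{c_2}$ as a genuine Legendre--Fenchel transform. Let $h(y) := \frac{1}{2}|y|^2 - \phi(y)$ for $y \in \Omega$, extended to $+\infty$ outside $\Omega$, and let $h^*(x) := \sup_{y \in \RR^d} (x \cdot y - h(y))$. Expanding the square gives
\[
\phi^{c_2}(x) \;=\; \frac{1}{2}|x|^2 - h^*(x).
\]
Since $u_0$ is lower semi-continuous on the compact set $\Omega$ it is bounded below, so $\phi$ is bounded above on $\Omega$, which makes $h^*$ finite and convex on all of $\RR^d$. Convexity then delivers local Lipschitz continuity and hence differentiability Lebesgue-almost everywhere by Rademacher's theorem; because $\partial\Omega$ is negligible by hypothesis, the set of points in $\Omega$ where $h^*$ (equivalently $\phi^{c_2}$) fails to be differentiable still has measure zero.

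To identify the minimizer at a differentiability point $x$, I would use the standard link between Legendre-transform maximizers and the subdifferential of the conjugate. Any minimizer $y^\ast \in \Omega$ of \eqref{prob:CROF} achieves the supremum defining $h^*(x)$, so for every $z \in \RR^d$,
\[
h^*(z) \;\geq\; z \cdot y^\ast - h(y^\ast) \;=\; h^*(x) + y^\ast \cdot (z-x),
\]
i.e.\ $y^\ast \in \partial h^*(x)$. At a differentiability point the subdifferential reduces to $\{\nabla h^*(x)\}$, forcing both uniqueness and the identity $y^\ast = \nabla h^*(x) = x - \nabla \phi^{c_2}(x)$; substituting $\phi = -\step u_0$ yields the stated formula. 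The one point that warrants care is ensuring that $h^*$ is finite everywhere, so that the convex-analytic machinery applies without effective-domain bookkeeping, but this is guaranteed by the upper bound on $\phi$, and the rest is really just Legendre duality combined with Rademacher.
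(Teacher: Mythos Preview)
Your proof is correct and follows essentially the same strategy as the paper: existence via compactness and lower semi-continuity, almost-everywhere differentiability of the value function via Rademacher, and identification of the minimizer through a first-order argument. The only cosmetic difference is that you pass through the Legendre representation $\phi^{c_2}(x)=\tfrac12|x|^2-h^*(x)$ and invoke convexity of $h^*$ and its subdifferential, whereas the paper cites the Lipschitz property of $c$-transforms directly and differentiates $x\mapsto \tfrac12|x-y_0|^2-\phi^{c_2}(x)$ at its interior minimum; these are equivalent packagings of the same idea.
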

\begin{proof}
Since $\Omega$ is compact and $u_0$ is lower semi-continuous, \eqref{prob:CROF} has a solution for all $x \in \Omega$ and the value of the minimum is finite. Compactness of $\Omega$ also implies that $(-\step u_0)^{c_2}$
is Lipschitz (see, for example, Box 1.8 of \cite{santambrogio2015optimal}), and thus the set of $x_0 \in \Omega\setminus \partial \Omega$ such that $\nabla(-\step u_0)^{c_2}(x_0)$ exists has full Lebesgue measure. 

For $x_0$ selected in this way, let $y_0 \in \Omega$ solve \eqref{prob:CROF}. By definition, for all $x \in \Omega$,
\begin{equation}
(-\step u_0)^{c_2}(x) \leq \frac{1}{2}|x-y_0|^2 + \step u_0(y_0),
\end{equation}
with equality at $x = x_0$. Thus, we obtain that the function $x \mapsto \frac{1}{2}|x-y_0|^2 - (-\step u_0)^{c_2}(x)$ is minimized at $x_0$. We therefore have
\begin{equation}
y_0 = x_0 - \nabla (-\step u_0)^{c_2}(x_0).
\end{equation}
This expresses the minimizer $y_0$ of \eqref{prob:CROF} for $x = x_0$ explicitly in terms of $x_0$; the minimizer is therefore unique.
\end{proof}

\Cref{lem:uniquesolvability} implies that whenever $\mu \ll \lebesgue$ and $u_0$ is continuous, \eqref{prob:CROF} has a unique solution $\mu$ almost everywhere, given by $(I-\nabla (-\step u_0)^{c_2})(x_0)$. The following lemma characterises the measure we obtain if we push $\mu$ forward under this solution map.

% There is a slight measurability issue here; since $(-\step u_0)^{c_2}$ is merely Lipschitz, $\nabla (-\step u_0)^{c_2}$ may not be a Borel map. Nevertheless, it is Lebesgue measurable, and so there is a Borel measurable map $T$ which coincides with $I-\nabla (-\step u_0)^{c_2}$ except on a set of Lebesgue measure zero. Since $\mu\ll \lebesgue$, $T(x)$ is $\mu$ almost everywhere a solution map for \eqref{prob:CROF}.

\begin{lemma}
\label{lem:char-of-solutionmap}
In addition to the assumptions of \Cref{lem:uniquesolvability}, let $\mu \in \mathcal{P}(\Omega)$ satisfy $\mu \ll \lebesgue$. Let $T$ be a Borel map which coincides with $I-\nabla (-\step u_0)^{c_2}$, $\mu$ almost everywhere. Then the measure $T_\# \mu$ is the unique solution to the optimization problem
\begin{equation}
    \inf_{\rho \in \mathcal{P}(\Omega)} \frac{1}{2}W_2^2(\rho, \mu) + \step\langle u_0, \rho \rangle.\label{prob:forwardeuler}
\end{equation} 
\end{lemma}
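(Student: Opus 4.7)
The plan is to rewrite the objective as a minimization over transport plans and then reduce it to a pointwise problem already solved in \Cref{lem:uniquesolvability}. Using the definition of $W_2^2$, for any $\rho \in \mathcal{P}(\Omega)$
\[
\tfrac{1}{2}W_2^2(\rho,\mu) + \step\langle u_0,\rho\rangle = \inf_{\gamma \in \Pi(\mu,\rho)} \int_{\Omega\times\Omega} \bigl[\tfrac{1}{2}|x-y|^2 + \step u_0(y)\bigr]\, d\gamma(x,y).
\]
Since a choice of $\rho$ together with $\gamma \in \Pi(\mu,\rho)$ is equivalent to a choice of a plan $\gamma$ with first marginal $\mu$ (the second marginal then being $\rho$), the infimum of the original problem coincides with
\[
\inf_{\gamma:\,(\pi_1)_\#\gamma=\mu} \int_{\Omega\times\Omega} \bigl[\tfrac{1}{2}|x-y|^2 + \step u_0(y)\bigr]\, d\gamma(x,y).
\]

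Next I would disintegrate any such $\gamma$ as $d\gamma(x,y) = d\gamma_x(y)\, d\mu(x)$ with $\gamma_x \in \mathcal{P}(\Omega)$. The integrand is then bounded below pointwise in $x$ by $\inf_{y\in\Omega}\bigl[\tfrac{1}{2}|x-y|^2 + \step u_0(y)\bigr]$, with equality exactly when $\gamma_x$ is concentrated on the set of minimizers of \eqref{prob:CROF}. By \Cref{lem:uniquesolvability}, this minimizer set is a singleton $\{T(x)\}$ for Lebesgue-a.e.\ $x$, and hence for $\mu$-a.e.\ $x$ since $\mu \ll \lebesgue$. Therefore the only $\gamma$ that attains the lower bound (mod $\mu$-null sets on the first factor) is $\gamma^* := (I,T)_\#\mu$, whose second marginal is precisely $T_\#\mu$.

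To conclude the proof, I would verify that $\gamma^*$ does attain the lower bound, which is immediate from $\gamma^*$-a.e.\ equality $y = T(x)$ and the definition of $T$, and then read off that $T_\#\mu$ is a minimizer of \eqref{prob:forwardeuler}. For uniqueness, suppose $\rho^\star$ is any minimizer and let $\gamma^\star \in \Pi(\mu,\rho^\star)$ be an optimal transport plan for $W_2(\mu,\rho^\star)$; then $\gamma^\star$ attains the lower bound above and the disintegration argument forces $\gamma_x^\star = \delta_{T(x)}$ for $\mu$-a.e.\ $x$, so $\gamma^\star = \gamma^*$ and $\rho^\star = T_\#\mu$. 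The only mild subtlety, which I expect to be the main thing to be careful about, is the measurable-selection / disintegration step: one needs $T$ to be Borel (which is part of the hypothesis) so that $(I,T)_\#\mu$ is a well-defined Borel plan, and one needs the lower bound $\gamma_x(\{T(x)\}) = 1$ to be a genuine $\mu$-a.e.\ statement, which follows from the $\mu$-a.e.\ uniqueness of the minimizer given by \Cref{lem:uniquesolvability} together with $\mu \ll \lebesgue$.
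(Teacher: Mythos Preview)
Your proof is correct and takes a genuinely different route from the paper's. The paper argues via convex analysis: it first invokes strict convexity of $\rho\mapsto \tfrac12 W_2^2(\rho,\mu)$ (using $\mu\ll\lebesgue$ and $\lebesgue(\partial\Omega)=0$, cf.\ \cite{santambrogio2015optimal}, Theorem 7.19) to get uniqueness, then characterises minimizers by the subdifferential condition $-\step u_0\in\partial\bigl(\tfrac12 W_2^2(\cdot,\mu)\bigr)(\rho_0)$, translates this via \cite{santambrogio2015optimal}, Proposition 7.17 into the duality identity
\[
\int_\Omega(-\step u_0)^{c_2}\,d\mu+\int_\Omega(-\step u_0)\,d\rho_0=\tfrac12 W_2^2(\mu,\rho_0),
\]
and finally defers the verification of this identity for $\rho_0=T_\#\mu$ to \Cref{lem:subdiff_of_H}. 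Your argument instead linearises the problem over plans with first marginal $\mu$, disintegrates, and reduces to the pointwise problem of \Cref{lem:uniquesolvability}; uniqueness then falls out of the $\mu$-a.e.\ uniqueness of the pointwise minimizer rather than from strict convexity of $W_2^2(\cdot,\mu)$. Your approach is more elementary and self-contained (no appeal to the subdifferential structure of $W_2^2$ or to a forward reference), while the paper's approach has the advantage of situating the result within the convex-analytic framework used throughout the rest of the article, in particular making transparent that $-\step u_0$ is the relevant Kantorovich potential.
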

\begin{proof}
First we note that the map 
\begin{equation}
\rho \mapsto \frac{1}{2}W_2^2(\rho, \mu) + \step\langle u_0, \rho \rangle \label{eq:W2tomu}
\end{equation}
is strictly convex by Theorem 7.19 from \cite{santambrogio2015optimal}, which holds since $\mu$ is absolutely continuous and $\lebesgue(\partial \Omega) = 0$. Thus, if a solution $\rho_0$ to \eqref{prob:forwardeuler} exists it is unique. A measure $\rho_0$ is a minimizer of \eqref{prob:forwardeuler} if and only if
\begin{equation*}
    0 \in \partial \left(\frac{1}{2}W_2^2(\cdot, \mu) + \langle \step u_0, \cdot \rangle\right)(\rho_0)
\end{equation*}
Since $\rho \mapsto \langle u_0, \rho\rangle$ is linear, this is equivalent to
\begin{equation*}
    -\step u_0 \in \partial \left(\frac{1}{2}W_2^2(\cdot, \mu)\right)(\rho_0).
\end{equation*}
By Proposition 7.17 of \cite{santambrogio2015optimal}, which characterises the subdifferential of the convex function $\rho \mapsto \frac{1}{2}W_2^2(\rho, \mu)$, we conclude that $\rho_0$ is a minimizer of \eqref{prob:forwardeuler} if and only if
\begin{equation}
    \int_\Omega (-\step u_0)^{c_2}d\mu + \int_\Omega (-\step u_0) d\rho_0 = \frac{1}{2}W_2^2(\mu, \rho_0). \label{eq:sufficient-cond-for-min-forwardeuler}
\end{equation}
This equality will be proved in in \Cref{lem:subdiff_of_H} for $\rho_0 = T_\# \mu$, when $T$ is $\mu$ almost everywhere equal to $I - \nabla (-\step u_0)^{c_2}$.
% We will now show that $T_\# \mu$ satisfies this equation. Noting that
% \begin{equation*}
%     (-\step u_0)^{c_2}(x) -\step u_0(y) \leq \frac{1}{2}|x-y|^2 \quad \forall x,y\in \Omega,
% \end{equation*}
% we obtain via Kantorovich duality that
% \begin{equation}
%     \int_\Omega(-\step u_0)^{c_2}(x) d\mu(x) + \int_\Omega -\step u_0(y)d T_\# \mu(y) \leq \frac{1}{2}W_2^2(\mu, T_\# \mu). \label{eq:less-than-squared-W2}
% \end{equation}
% On the other hand,
% \begin{align}
%     \int_\Omega &(-\step u_0)^{c_2}(x)d\mu(x) + \int_\Omega (-\step u_0) (y)dT_\# \mu(y) \nonumber\\&= \int_\Omega \left(\frac{1}{2}|x- T(x)|^2  + \step u_0(T(x))\right) d\mu(x) - \int_\Omega \step u_0(T(x)) d\mu(x),\nonumber\\
%     &= \int_\Omega \frac{1}{2}|x-T(x)|^2 d\mu(x).\nonumber\\
%     &\geq \frac{1}{2}W_2^2(\mu,T_\# \mu).\label{eq:greater-than-squared-W2}
% \end{align}
% Note that in the second line we have used \Cref{lem:uniquesolvability}. Equations \eqref{eq:less-than-squared-W2} and \eqref{eq:greater-than-squared-W2} thus establish that \eqref{eq:sufficient-cond-for-min-forwardeuler} holds for $T_\# \mu$, proving that $T_\# \mu$ is the unique minimizer of \eqref{prob:forwardeuler}.
\end{proof}
\begin{remark}
We note that \Cref{lem:char-of-solutionmap} describes the distribution one obtains by applying the denoising technique from \cite{lunz2018adversarial} pointwise to an absolutely continuous distribution $\mu$. Indeed, that procedure consists of solving \eqref{prob:CROF} given $x$ when $u_0$ is a Kantorovich potential for $W_1(\mu,\nu)$. It is interesting to observe that while the denoising technique of \cite{lunz2018adversarial} applied to a specific image $x_0$ amounts to an implicit Euler scheme on a Kantorovich potential $u_0$, \Cref{lem:char-of-solutionmap} shows that the distribution one thus obtains on all images is characterised as an explicit Euler step on the functional $W_1(\cdot,\nu)$; this holds since such a $u_0$ is a subgradient of this functional evaluated at $\mu$. Implicit Euler discretizations are often better behaved, motivating us to replace \eqref{prob:forward_euler_inW2} with \eqref{prob:initial-multiscale-statement}. 
 
\end{remark}

\subsection{Equivalence of \eqref{prob:initial-multiscale-statement} and denoising by adversarial regularization}
\label{subsec:optimality-conditions}
Here we will show that that under the assumption that $\step$ is less than the minimal transport length for the ray monotone Wasserstein 1 transport from $\mu$ to $\nu$, the solution to \eqref{prob:initial-multiscale-statement} and the measure obtained via the technique of \cite{lunz2018adversarial} are actually the same. 
\begin{proposition}
\label{lem:multiscale_generalizes_TTC}
Suppose that $\Omega\subset \RR^d$ is compact and convex, and that $\mu,\nu \in \mathcal{P}(\Omega)$. Suppose that $\mu \ll \mathcal{L}_d$, and that $\step>0$ satisfies 
\begin{equation}
\label{eq:minimal_displacement_g_step}
    \essinf_{\mu} |x-T_0(x)| > \step,
\end{equation}
where $T_0$ is the unique ray monotone optimal transport map for $W_1(\mu,\nu)$. Take $u_0 \in \onelip$ a Kantorovich potential for $W_1(\mu,\nu)$, and let $T$ be a Borel map equal to $I-\nabla(-\step u_0)^{c_2}$ $\mu$ almost everywhere. Then $\rho_\step:= T_\# \mu$ is the unique solution to \eqref{prob:initial-multiscale-statement}.
\end{proposition}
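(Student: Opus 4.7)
The strategy is to compare WROF against the forward Euler problem \eqref{prob:forward_euler_inW2}, which by \Cref{lem:char-of-solutionmap} is uniquely minimized by $\rho_\step := T_\#\mu$. Writing $F(\rho) := \tfrac{1}{2}W_2^2(\rho,\mu) + \step \langle u_0,\rho\rangle$ and $G(\rho) := \tfrac{1}{2}W_2^2(\rho,\mu) + \step W_1(\rho,\nu)$, Kantorovich duality applied with the test function $u_0 \in \onelip$ yields $W_1(\rho,\nu) \geq \int u_0\, d\rho - \int u_0\, d\nu$, and hence $G(\rho) \geq F(\rho) - \step \int u_0\, d\nu$ for every $\rho \in \mathcal{P}(\Omega)$. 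If this inequality is an equality at $\rho = \rho_\step$, then for any competitor $\rho$,
\begin{equation*}
G(\rho) \geq F(\rho) - \step \int u_0\, d\nu \geq F(\rho_\step) - \step \int u_0\, d\nu = G(\rho_\step),
\end{equation*}
so $\rho_\step$ minimizes $G$; uniqueness follows from strict convexity of $\rho \mapsto \tfrac{1}{2}W_2^2(\rho,\mu)$ used in the proof of \Cref{lem:char-of-solutionmap}. Thus the task reduces to showing that $u_0$ is a Kantorovich potential for $W_1(\rho_\step,\nu)$.

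To prove this, I would first identify $T$ explicitly at $\mu$-a.e.\ $x$. Since $T_0$ is an optimal $W_1$-transport map with Kantorovich potential $u_0$, the saturation $u_0(x) - u_0(T_0(x)) = |x - T_0(x)|$ combined with $u_0 \in \onelip$ forces $u_0$ to be affine of slope $-1$ along the segment $[x, T_0(x)]$, which lies in $\Omega$ by convexity. The strict inequality $|x - T_0(x)| > \step$ guarantees that $y_0(x) := x + \step \, (T_0(x)-x)/|T_0(x)-x|$ lies in the interior of this segment, and the affine behaviour of $u_0$ gives $\tfrac{1}{2}|x-y_0(x)|^2 + \step u_0(y_0(x)) = \step u_0(x) - \tfrac{1}{2}\step^2$. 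The bound $u_0(y) \geq u_0(x) - |x-y|$ together with completing the square then provides the matching lower bound $\tfrac{1}{2}|x-y|^2 + \step u_0(y) \geq \step u_0(x) - \tfrac{1}{2}\step^2$ for every $y \in \Omega$. Hence $y_0(x)$ minimizes $y \mapsto \tfrac{1}{2}|x-y|^2 + \step u_0(y)$, and by the uniqueness statement in \Cref{lem:uniquesolvability}, $T(x) = y_0(x)$ for $\mu$-a.e.\ $x$.

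With $T$ in hand, form the coupling $\gamma := (T, T_0)_\# \mu \in \Pi(\rho_\step, \nu)$. Because $T(x)$ lies on $[x, T_0(x)]$ at distance exactly $\step$ from $x$, the $W_1$-cost of $\gamma$ equals $\int (|x - T_0(x)| - \step)\, d\mu$. Since $u_0$ is affine of slope $-1$ along this segment, $u_0(T(x)) - u_0(T_0(x)) = |T(x) - T_0(x)|$ pointwise; integrating and using $T_\#\mu = \rho_\step$, $(T_0)_\#\mu = \nu$ yields $\int |T(x)-T_0(x)|\, d\mu = \int u_0\, d\rho_\step - \int u_0\, d\nu$. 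Combined with Kantorovich duality this forces $W_1(\rho_\step,\nu) = \int u_0\, d\rho_\step - \int u_0\, d\nu$, which by the first paragraph concludes the proof. The main obstacle is the explicit identification of $T(x)$ as a single step of length $\step$ along the ray monotone direction; the strict inequality $|x - T_0(x)| > \step$ is essential here, since without it the minimizer could coincide with $T_0(x)$ and the clean two-stage decomposition used to exhibit $\gamma$ would break down.
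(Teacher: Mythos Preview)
Your argument is correct. The overall logic coincides with the paper's: both reduce to showing that $u_0$ remains a Kantorovich potential for $W_1(\rho_\step,\nu)$, and both rely on the explicit identification $T(x) = x + \step\,(T_0(x)-x)/|T_0(x)-x|$. The packaging differs in two respects. First, the paper phrases optimality via subdifferential inclusion---checking that $\varphi_0 := -\step u_0$ lies in $\partial\bigl(\tfrac12 W_2^2(\cdot,\mu)\bigr)(\rho_\step)$ and $-\varphi_0 \in \partial\bigl(\step W_1(\cdot,\nu)\bigr)(\rho_\step)$, whence $0 \in \partial G(\rho_\step)$---whereas you use the equivalent but more elementary comparison $G \geq F - \step\int u_0\,d\nu$ together with \Cref{lem:char-of-solutionmap}. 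Second, the paper delegates the two key computations (the explicit form of $T$, and the persistence of $u_0$ as a $W_1$-potential after the push-forward) to external references (\cite{milne2022new}, Proposition~9, and \cite{milne2021trust}, Theorem~1(i)), while you supply both directly: the first via the lower bound $\tfrac12|x-y|^2 + \step u_0(y) \geq \step u_0(x) - \tfrac12\step^2$ saturated at $y_0(x)$, the second via the explicit coupling $(T,T_0)_\#\mu$. Your route is thus self-contained and slightly more elementary; the paper's fits its convex-analytic framework and is shorter by outsourcing the computations.
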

\begin{proof}
Since $\Omega$ is convex we immediately obtain that $\lebesgue(\partial \Omega) = 0$ (see, for example, \cite{lang1986note}). Thus, $\mu \ll \lebesgue$ implies that the functional in \eqref{prob:initial-multiscale-statement} is strictly convex (via \cite{santambrogio2015optimal}, Theorem 7.19 again), and so the solution to \eqref{prob:initial-multiscale-statement} is unique if it exists. Next, we claim that if $\rho_0 \in \mathcal{P}(\Omega)$ and there exists $\varphi_0 \in \steplip$ such that
\begin{align}
        \int_\Omega \varphi_0 d \nu - \int_\Omega \varphi_0 d\rho_0 &= \step W_1(\rho_0, \nu) \label{eq:varphi-is-W1-kpot},
\end{align}
and
\begin{align}
        \int_\Omega \varphi_0^{c_2} d\mu + \int_\Omega \varphi_0 d\rho_0 &= \frac{1}{2}W_2^2(\mu,\rho_0),\label{eq:varphi-is-W2-kpot}
\end{align}
then $\rho_0$ solves \eqref{prob:initial-multiscale-statement}. Indeed, by Proposition 7.17 from \cite{santambrogio2015optimal}, assumptions \eqref{eq:varphi-is-W1-kpot} and \eqref{eq:varphi-is-W2-kpot} imply that 
\begin{equation*}
    -\varphi_0 \in \partial \left(\step W_1(\cdot, \nu)\right)(\rho_0), \quad \varphi_0 \in \partial \left(\frac{1}{2}W_2^2(\cdot, \mu)\right)(\rho_0).
\end{equation*}
As such,
\begin{align*}
    0 &= \varphi_0 - \varphi_0,\\
    &\in \partial \left(\frac{1}{2}W_2^2(\cdot, \mu)\right)(\rho_0) + \partial \left(\step W_1(\cdot, \nu)\right)(\rho_0),\\
    &\subset \partial\left(\frac{1}{2}W_2^2(\cdot, \mu) + \step W_1(\cdot, \nu)\right)(\rho_0),
\end{align*}
and thus $\rho_0$ solves \eqref{prob:initial-multiscale-statement}, proving the claim.

Now we assert that these conditions hold for $\rho_\step:= T_\# \mu$ and $\varphi_0:= - \step u_0$. First, we note that by Proposition 9 from \cite{milne2022new} and \Cref{lem:uniquesolvability}, the assumption \eqref{eq:minimal_displacement_g_step} implies that
\begin{equation}
    I-\nabla(-\step u_0)^{c_2}(x) = I-\step \nabla u_0(x)\label{eq:solution_map_is_gradient_descent}
\end{equation}
$\mu$ almost everywhere. Next, observe that convexity of $\Omega$, together with \eqref{eq:minimal_displacement_g_step} and standard properties of Wasserstein 1 Kantorovich potentials, imply that $\rho_\step \in \mathcal{P}(\Omega)$. Also, by Theorem 1 (i) of \cite{milne2021trust}, $u_0$ is a Kantorovich potential for $W_1(\rho_\step, \nu)$. As such, $\varphi_0=-\step u_0$ satisfies $\varphi_0 \in \steplip$ and \eqref{eq:varphi-is-W1-kpot}. Finally, \eqref{eq:varphi-is-W2-kpot} is given by \Cref{lem:subdiff_of_H}, since $\rho_\step = T_\# \mu$, and by definition $T = I-(-\step \nabla u_0)^{c_2}$
$\mu$ almost everywhere.
% To check \eqref{eq:varphi-is-W2-kpot}, we compute $\varphi_0^{c_2}(x)$; by Proposition 1 of \cite{milne2021trust} again, we have that $\mu$ almost everywhere,
% \begin{equation*}
%      \varphi_0^{c_2}(x) = \step u_0(x) - \frac{1}{2}\step^2, \quad u_0(x-\step \nabla u_0(x)) = u_0(x) - \step.
%  \end{equation*}
%  As such,
%  \begin{align}
%      \int_\Omega \varphi_0^{c_2}(x) &d\mu(x) + \int_\Omega \varphi_0(y) d\rho_\step(y) \\
%      &= \int_\Omega (\step u_0(x) - \frac{1}{2}\step^2) d\mu(x) - \int_\Omega \step u_0(x-\step \nabla u_0(x)) d\mu(x),\nonumber\\
%      &= \int_\Omega (\step u_0(x) - \frac{1}{2}\step^2) d\mu(x) - \int_\Omega (\step u_0(x)  - \step^2) d\mu(x),\nonumber\\
%      &= \frac{1}{2}\step^2. \label{eq:lowerbound-on-W2}
%  \end{align}
%  Noting that the pair $(\varphi_0^{c_2}, \varphi_0)$ satisfies $\varphi_0^{c_2}(x) + \varphi_0(y) \leq \frac{1}{2}|x-y|^2$ for all $x, y \in \Omega$, we therefore obtain via duality that
% \begin{equation*}
%     \frac{1}{2}\step^2 \leq \frac{1}{2}W_2^2(\mu,\rho_\step).
% \end{equation*}
% On the other hand, the cost of the transport map $T$ is
% \begin{equation*}
%     \int_\Omega \frac{1}{2}|x-T(x)|^2 d\mu(x) \leq \frac{1}{2}\step^2,
% \end{equation*}
% and thus $\frac{1}{2}W_2^2(\mu,\rho_\step) = \frac{1}{2}\step^2$. Together with \eqref{eq:lowerbound-on-W2}, this implies \eqref{eq:varphi-is-W2-kpot}. \Cref{lem:sufficient-conditions-for-minimum} then implies that $\rho_\step$ is the unique solution of \eqref{prob:initial-multiscale-statement}.
\end{proof}
% \begin{remark}
% We note that \Cref{lem:multiscale_generalizes_TTC}, in particular \eqref{eq:solution_map_is_gradient_descent}, establishes that the solution of \eqref{prob:initial-multiscale-statement} coincides with the measure $(I-\lambda \nabla u_0)_\# \mu$ under appropriate hypotheses. The latter is precisely the measure obtained by one step of the method in \cite{milne2021trust}. (REMOVE THIS REMARK? IT'S NOT FULLY ACCURATE)
% \end{remark}
The link between \eqref{prob:initial-multiscale-statement} and the denoising method of \cite{lunz2018adversarial} having been established, we now analyse solutions of \eqref{prob:initial-multiscale-statement}. 
\section{A class of minimization problems with solutions given by projections}
\label{sec:deeper_analysis}
In this section we will prove a general theorem about the minimization of a certain class of convex functions, establishing that the solution map is equivalent to a projection. We will show that ROF (see \eqref{prob:ROF}) and \eqref{prob:initial-multiscale-statement} are examples of this class of problems. Thus, we can apply this general theorem to yield \Cref{cor:meyer_for_ROF} and, with additional arguments from optimal transport, our \Cref{prop:meyer-for-multiscale-simplified}. This puts ROF and \eqref{prob:initial-multiscale-statement} within a common framework and provides a fruitful analogy in the sequel. 
%In this section we will prove the existence of a solution to \eqref{prob:initial-multiscale-statement}. This could be established by the direct method of the calculus of variations, but by examining a dual problem to \eqref{prob:initial-multiscale-statement} we can specify the dependence of the minimizer of \eqref{prob:initial-multiscale-statement} on the step size parameter $\step$. We can then highlight some interesting similarities of this behaviour to aspects of the Rudin-Osher-Fatemi denoising model illuminated in \cite{meyer2001oscillating}.

Let $X$ be a Hausdorff locally convex topological vector space\footnote{We will not need this amount of generality for our applications, but we phrase our theorem in this setting to indicate that nothing more is needed.}, and take $X^*$ as its continuous dual; in general we will denote by $x$ and $x^*$ points in $X$ and $X^*$ respectively. Let $F:X \rightarrow \RR$ be a proper lower semi-continuous convex functional. Recall that the Legendre dual of such a function is given by $F^* : X^* \rightarrow \RR \cup \{+\infty\}$,
\begin{equation*}
    F^*(x^*):= \sup_{x \in X} \langle x, x^* \rangle - F(x),
\end{equation*}
with $\langle \cdot, \cdot \rangle$ denoting the duality pairing, and set 
\begin{equation*}
\text{dom}(F^*) = \{ x^* \in X^* \mid F^*(x^*) < +\infty\}.    
\end{equation*}
When studying the subdifferential of $F^*$ we will restrict the dual of $X^*$ to $X \subset X^{**}$, i.e.
\begin{equation*}
    \partial F^*(x^*) := \{ x \in X \mid \forall y^* \in X^*, F^*(y^*) \geq F^*(x^*) + \langle x, y^* - x^*\rangle \}. 
\end{equation*}
We will focus on $F$ which are in fact continuous, and such that $F^*$ is a strictly convex function. Take $K \subset X$ as a closed, convex, non-empty set satisfying $K = -K$ and let $1_K$ denote the indicator function of $K$. For $y_0^* \in X^*$, consider the optimization problem
\begin{equation}
    \min_{x^* \in X^*} F^*(x^*) + 1_{K}^*(x^* - y_0^*).\label{prob:general_minimization_problem}
\end{equation}
To motivate the analysis of such problems, we will now  indicate that both ROF and \eqref{prob:initial-multiscale-statement} are examples. 
\begin{example}[ROF]
\label{ex:how_ROF_is_general}
Take $X = L^2(\RR^2)$, and $F: L^2(\RR^2) \rightarrow \RR$ as
\begin{equation*}
    F(u) = \frac{1}{2}\norm{u}_{L^2(\RR^2)}^2 + \langle f, u\rangle.
\end{equation*}
This functional is obviously continuous and convex. It is a simple exercise to show that its dual is
\begin{equation*}
    F^*(u) = \frac{1}{2}\norm{u-f}_{L^2(\RR^2)}^2,
\end{equation*}
which is strictly convex. Take the set $K$ as
\begin{equation*}
    K:= \{ v \in L^2(\RR^2) \mid \norm{v}_* \leq \step\}.
\end{equation*}
It is clear that $K$ is convex, $K = -K$, and $K$ is closed. It is also not difficult to show that
\begin{equation*}
    1_K^*(u):= \sup_{v \in K}\int_{\RR^2} vu dx =\step \norm{u}_{TV}
\end{equation*}
Thus, we see that ROF (i.e. \eqref{prob:ROF}) is an example of \eqref{prob:general_minimization_problem}, with $y_0^* = 0$.
\end{example}

\begin{example}[WROF]
\label{ex:how_multiscale_is_example}
Assume $\Omega \subset \RR^d$ is compact and convex (and therefore $\lebesgue(\partial \Omega) = 0$). Let $X = C(\Omega)$ with the topology induced by the sup norm. Then $X^* = \mathcal{M}(\Omega)$, the set of finite signed Borel measures on $\Omega$.  Let $\mu \in \mathcal{P}(\Omega)$ with $\mu \ll \lebesgue$, and take $F: C(\Omega) \rightarrow \RR$ as the functional
\begin{equation*}
    F(\varphi) := - \int_\Omega \varphi^{c_2} d\mu.
\end{equation*}
It is shown in the proof of Proposition 7.17 of \cite{santambrogio2015optimal} that $F$ defined in this way is convex and continuous, and that $F^*$ satisfies, for $\rho \in \mathcal{M}(\Omega)$,
\begin{equation*}
    F^*(\rho) = \begin{cases}
    \frac{1}{2}W_2^2(\rho,\mu) &\quad \rho \in \mathcal{P}(\Omega),\\
    +\infty &\quad \text{ else }.
    \end{cases}
\end{equation*}
Further, Proposition 7.19 of \cite{santambrogio2015optimal} proves that $F^*$ is strictly convex when $\mu \ll \lebesgue$. 

Now take $K = \steplip$. This set is convex and closed in $C(\Omega)$, and satisfies $K = -K$. In addition, for $\nu, \rho \in \mathcal{P}(\Omega)$, we have
\begin{align*}
    1_K^*(\rho-\nu) &= \sup_{\varphi \in \steplip} \langle \varphi, \rho - \nu\rangle,\\
    &= \step W_1(\rho,\nu).
\end{align*}
Thus, \eqref{prob:initial-multiscale-statement} is of the form \eqref{prob:general_minimization_problem}.
\end{example}

For our analysis of \eqref{prob:general_minimization_problem}, we find it natural to  
define the divergence $D:\text{dom}(F^*) \times \text{dom}(F^*) \rightarrow \RR \cup \{+\infty\}$ by
\begin{equation}
    D(y^*, x^*):= F^*(y^*) - F^*(x^*) - \sup_{x \in \partial F^*(x^*) \cap K} \langle x, y^* - x^* \rangle.\label{def:general_divergence}
\end{equation}
The following lemma shows that $D$ has properties similar to those of a Bregman divergence.
\begin{lemma}
\label{lem:general_divergence_lemma}
For all $y^*, x^* \in \text{dom}(F^*)$, the functional $D$ satisfies
\begin{equation*}
    D(y^*,x^*) \geq 0.
\end{equation*}
Moreover, if $F^*$ is strictly convex, then $D(y^*, x^*) = 0$ if and only if $\partial F^*(x^*) \cap K \neq \emptyset$ and $y^* = x^*$.
\end{lemma}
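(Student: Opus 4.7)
The plan is to treat the non-negativity $D \geq 0$ and the equality characterization separately; both follow directly from the defining subgradient inequality for $F^*$, with strict convexity entering only in the second part.

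For the inequality $D(y^*, x^*) \geq 0$, I would first dispose of the case $\partial F^*(x^*) \cap K = \emptyset$ using the convention $\sup_{\emptyset} = -\infty$, which gives $D(y^*, x^*) = +\infty$ and hence the bound trivially. When the intersection is non-empty, the defining inequality of the subdifferential $F^*(y^*) \geq F^*(x^*) + \langle x, y^* - x^*\rangle$ holds for every $x \in \partial F^*(x^*)$, so taking the supremum over $x \in \partial F^*(x^*) \cap K$ on the right and rearranging yields the claim.

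For the equality characterization, one direction is immediate: if $y^* = x^*$ and $\partial F^*(x^*) \cap K \neq \emptyset$, both $F^*(y^*) - F^*(x^*)$ and $\sup_x \langle x, 0\rangle$ vanish, so $D = 0$. For the converse, assuming $D(y^*, x^*) = 0$ forces $\partial F^*(x^*) \cap K$ to be non-empty (otherwise $D = +\infty$), and I would show $y^* = x^*$ by contradiction. Suppose $y^* \neq x^*$ and set $m^* = \frac{1}{2}(x^* + y^*)$. Applying the subgradient inequality at $m^*$ for each $x \in \partial F^*(x^*) \cap K$ yields $F^*(m^*) \geq F^*(x^*) + \frac{1}{2}\langle x, y^* - x^*\rangle$; since the left-hand side is independent of $x$, I can pass the supremum to the right, and combining with $D(y^*, x^*) = 0$ gives $F^*(m^*) \geq \frac{1}{2}F^*(x^*) + \frac{1}{2}F^*(y^*)$, directly contradicting the strict convexity of $F^*$.

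The delicate point is that the supremum defining $D$ need not be attained, so I cannot simply fix a maximizer and invoke a strict form of the subgradient inequality at that single point. The midpoint argument sidesteps this because the subgradient inequality at $m^*$ is linear in $x$ with the slope $\frac{1}{2}(y^* - x^*)$ uniform in $x$, so taking the sup is a clean operation; the needed strictness is supplied separately by strict convexity of $F^*$ evaluated at $m^*$, with no dependence on any particular subgradient.
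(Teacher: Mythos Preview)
Your proof is correct. The non-negativity and the forward direction of the equality characterization match the paper exactly. For the converse, the paper handles the possibly non-attained supremum by choosing an $\epsilon$-approximate maximizer $x_\epsilon \in \partial F^*(x^*)\cap K$, deducing $F^*(y^*)\le F^*(x^*)+\langle x_\epsilon,y^*-x^*\rangle+\epsilon$, and then showing that $F^*$ is affine on the entire segment $[x^*,y^*]$ after letting $\epsilon\to 0$. Your midpoint argument is a tidy variant of the same mechanism: because the subgradient inequality at $m^*$ is linear in $x$, you can pass to the supremum directly and avoid the $\epsilon$-limit, and testing strict convexity at a single interior point suffices. Both routes use the same ingredients; yours is slightly more economical, while the paper's version yields the marginally stronger intermediate conclusion that $F^*$ is affine on the whole segment.
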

\begin{proof}
The claim $D(y^*,x^*) \geq 0$ clearly holds if $\partial F^*(x^*) \cap K = \emptyset$. On the other hand, if $\partial F^*(x^*) \cap K  \neq \emptyset$ the definition of the subdifferential of $F^*$ confirms that $D(y^*, x^*) \geq 0$. Clearly, if $\partial F^*(x^*) \cap K \neq \emptyset$ and $y^* = x^*$ we have $D(y^*, x^*) = 0$.  On the other hand, let $F^*$ be strictly convex. If $D(y^*, x^*) = 0$, then take $\epsilon>0$ and $x_\epsilon \in \partial F^*(x^*) \cap K $ such that
\begin{equation*}
    \sup_{x \in \partial F^*(x^*) \cap K} \langle x, y^* - x^* \rangle - \epsilon \leq \langle x_\epsilon, y^* - x^* \rangle.
\end{equation*}
Since $D(y^*, x^*) = 0$, we therefore obtain
\begin{equation*}
    F^*(y^*) \leq F^*(x^*) + \langle x_\epsilon, y^* - x^* \rangle + \epsilon.
\end{equation*}
Hence, for $t \in [0,1]$,
\begin{align*}
    F^*((1-t)x^* + ty^*) &\leq (1-t) F^*(x^*) + t F^*(y^*),\\
    &\leq (1-t) F^*(x^*) + tF^*(x^*) \\
    &\quad + \langle x_\epsilon, (1-t)x^* + ty^* - x^* \rangle  +t \epsilon,\\
    &\leq F^*((1-t)x^* + ty^*) + t \epsilon.
\end{align*}
Since $\epsilon$ is arbitrary, we obtain that $F^*$ is affine on the segment $[x^*, y^*]$, a contradiction to strict convexity unless $x^* = y^*$. 
\end{proof}
\begin{example}
\label{ex:D_comp_for_ROF}
Let us determine $D$ is in the context of ROF. Recall that in this case, $F^*(u) = \frac{1}{2}\norm{u-f}^2_{L^2(\RR^2)}$. Then $\partial F^*(u)$ is a singleton, given by $\{ u-f\}$. So $D(v, u) = +\infty$ unless $\norm{u-f}_* \leq \step$. In that case,
\begin{align*}
    D(v,u) &= \frac{1}{2}\norm{v-f}^2_{L^2(\RR^2)} - \frac{1}{2}\norm{u-f}^2_{L^2(\RR^2)} - \langle u-f, v-u \rangle,\\
    &= \frac{1}{2}\norm{u-v}^2_{L^2(\RR^2)}.
\end{align*}
The description of $D$ in the context of \eqref{prob:initial-multiscale-statement} will be given in \Cref{sec:divergence_in_WROF}.
\end{example}
For a non-empty convex set $K \subset X$ satisfying $K = -K$, define the semi-norm $\norm{\cdot}_K:X \rightarrow \RR \cup \{+\infty\}$ given by
\begin{equation*}
    \norm{x}_K = \inf \{ t >0 \mid \frac{x}{t} \in K\}.
\end{equation*}
We can now state the main result of this section, which provides conditions under which the solution to \eqref{prob:general_minimization_problem}, if it exists, can be expressed as a projection in the divergence $D$ onto the set of $x^*$ such that $\partial F^*(x^*) \cap K \neq \emptyset$. 

\begin{theorem}
\label{thm:general_meyer}
Suppose that $X$ is a Hausdorff locally convex topological vector space, with $X^*$ as its dual. Assume $F: X \rightarrow \RR$ is continuous and convex, and that its dual $F^*$ is strictly convex. Let $K \subset X$ be a closed, convex, non-empty set satisfying $K = -K$. Suppose that $y_0^* \in \text{dom}(F^*)$, and the problem
\begin{equation}
    \sup_{x \in K} \langle x, y_0^* \rangle - F(x) \label{prob:general_dual},
\end{equation}
has a solution $x_0$. Then
\begin{enumerate}[label=\alph*.]
    \item \eqref{prob:general_minimization_problem} has a unique solution $x_0^*$ given by the single element of $\partial F(x_0)$, \label{item:unique_soln_is_subdiff}
    \item $x_0^*$ is also a solution to \label{item:soln_is_proj}
    \begin{equation}
        \min_{ F^*(x^*) \cap K \neq \emptyset} D(y_0^*, x^*),\label{prob:general_projection}%\mid \partial F^*(x^*) \cap K \neq \emptyset\}, 
    \end{equation}
    and,
    \item the values of \eqref{prob:general_minimization_problem}, \eqref{prob:general_dual}, and $F^*(y_0^*) - D(y_0^*, x_0^*)$ coincide.\label{item:values_coincide}
\end{enumerate}
Given \textit{b}, we obtain the following dichotomy:
\begin{enumerate}
    \item If $\partial F^*(y_0^*) \cap K \neq \emptyset$, then $x_0^* = y_0^*$.
    \item Otherwise, $x_0^* \neq y_0^*$, and any solution $x_0$ to \eqref{prob:general_dual} satisfies $x_0 \in\partial F^*(x_0^*)$, $\norm{x_0}_K = 1$, and
    \begin{equation}
        \langle x_0, y_0^* - x_0^* \rangle = 1_K^*(x_0^* - y_0^*). \label{eq:extreme_pair}
    \end{equation}
\end{enumerate}
% Moreover, the following are equivalent:
% \begin{enumerate}[label=(\alph*)]
%     \item $x^*_0$ is the unique solution to \eqref{prob:general_minimization_problem}.
%     \item $x^*_0 \in \partial F(x_0)$ for $x_0$ a solution to \eqref{prob:general_dual}.
% \end{enumerate}
\end{theorem}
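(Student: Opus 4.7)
The strategy I would use is convex duality: \eqref{prob:general_dual} is the Fenchel--Rockafellar dual of \eqref{prob:general_minimization_problem}, with $K = -K$ lining up signs. Given a dual maximizer $x_0$, I would first note that continuity of $F$ gives $\partial F(x_0) \neq \emptyset$, and strict convexity of $F^*$ forces this subdifferential to be a singleton $\{x_0^*\}$ -- two distinct subgradients would render $F^*$ affine on the connecting segment, a contradiction. Next, I would apply the sum rule $\partial(F + 1_K) = \partial F + N_K$ (valid by continuity of $F$) to the first-order optimality condition for the dual, obtaining $y_0^* - x_0^* \in N_K(x_0)$; interpreting the normal-cone membership as a support-function identity and using $K = -K$ would yield the extremality identity $\langle x_0, y_0^* - x_0^* \rangle = 1_K^*(x_0^* - y_0^*)$, which is the key output of this step.

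Combining this identity with the Fenchel--Young equality $F(x_0) + F^*(x_0^*) = \langle x_0, x_0^* \rangle$ would show that the primal value at $x_0^*$ equals the dual value at $x_0$. To conclude that $x_0^*$ is a primal minimizer, I would invoke weak duality, which is a two-line consequence of Fenchel--Young together with the fact that $-x \in K$ whenever $x \in K$. Uniqueness of $x_0^*$ would then come from strict convexity of $F^*$ plus convexity of $1_K^*(\cdot - y_0^*)$, establishing \ref{item:unique_soln_is_subdiff}. Part \ref{item:values_coincide} would follow for free: since $x_0 \in \partial F^*(x_0^*) \cap K$, combining with the extremality identity makes $\sup_{x \in \partial F^*(x_0^*) \cap K} \langle x, y_0^* - x_0^* \rangle = 1_K^*(x_0^* - y_0^*)$, giving $F^*(y_0^*) - D(y_0^*, x_0^*) = F^*(x_0^*) + 1_K^*(x_0^* - y_0^*)$.

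For \ref{item:soln_is_proj}, feasibility of $x_0^*$ is the same observation $x_0 \in \partial F^*(x_0^*) \cap K$. For any competitor $x^*$ with $\partial F^*(x^*) \cap K \neq \emptyset$, I would pick $x$ in the intersection and use $x \in K$ to bound $\sup_{y \in \partial F^*(x^*) \cap K} \langle y, y_0^* - x^* \rangle \leq 1_K^*(x^* - y_0^*)$, whence
\begin{equation*}
D(y_0^*, x^*) \geq F^*(y_0^*) - \bigl(F^*(x^*) + 1_K^*(x^* - y_0^*)\bigr) \geq F^*(y_0^*) - \bigl(F^*(x_0^*) + 1_K^*(x_0^* - y_0^*)\bigr) = D(y_0^*, x_0^*),
\end{equation*}
the second inequality being primal optimality of $x_0^*$.

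For the dichotomy, Case 1 is short: $y_0^*$ is feasible with $D(y_0^*, y_0^*) = 0$, so part \ref{item:soln_is_proj} combined with \Cref{lem:general_divergence_lemma} (using strict convexity of $F^*$) yields $x_0^* = y_0^*$. In Case 2, the equality $x_0^* = y_0^*$ is ruled out because it would give $x_0 \in \partial F^*(y_0^*) \cap K$, and the extremality identity and $x_0 \in \partial F^*(x_0^*)$ are already in hand. I expect the main obstacle to be the boundary statement $\|x_0\|_K = 1$: if $\alpha x_0 \in K$ for some $\alpha > 1$, concavity of $\alpha \mapsto \alpha \langle x_0, y_0^* \rangle - F(\alpha x_0)$, attaining its maximum over the feasible range at the interior point $\alpha = 1$, together with the formula $F'(x_0; x_0) = \langle x_0, x_0^* \rangle$ (available because $\partial F(x_0) = \{x_0^*\}$), would force $\langle x_0, y_0^* - x_0^* \rangle \leq 0$, and the extremality identity would then collapse to $1_K^*(x_0^* - y_0^*) = 0$. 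This degenerate possibility must be ruled out against Case 2 by a further strict-convexity argument along the affine line from $x_0^*$ to $y_0^*$, on which the penalty $1_K^*(\cdot - y_0^*)$ vanishes by positive homogeneity and $K = -K$, making $x_0^*$ the unique $F^*$-minimizer on that line and thereby forcing it to coincide with $y_0^*$.
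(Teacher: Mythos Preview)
Your overall strategy coincides with the paper's: Fenchel duality, the subdifferential sum rule (valid by continuity of $F$), the symmetry $K=-K$ to align signs, and Fenchel--Young equalities to pass between primal and dual. Parts \ref{item:unique_soln_is_subdiff}, \ref{item:soln_is_proj}, \ref{item:values_coincide} and Case~1 of the dichotomy go through along the same lines as the paper; your argument for \ref{item:soln_is_proj} is in fact slightly more direct than the paper's, which routes the comparison through the dual value rather than bounding $D(y_0^*,x^*)$ from below by the primal objective.

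The one genuine gap is in your last step for $\|x_0\|_K = 1$. You correctly reduce $\|x_0\|_K < 1$ to the degenerate situation $1_K^*(x_0^* - y_0^*) = 0$, and you correctly observe that the penalty then vanishes along the whole line through $y_0^*$ and $x_0^*$, so that $x_0^*$ is the unique $F^*$-minimizer on that line. But the inference ``thereby forcing it to coincide with $y_0^*$'' is a non sequitur: nothing ties the $F^*$-minimizer on that line to $y_0^*$. Concretely, take $X=\RR^2$, $F(x)=\tfrac12|x|^2$, $K=[-1,1]\times\{0\}$, $y_0^*=(0,1)$; then $x_0=x_0^*=(0,0)$, one is in Case~2, the extremality identity reads $0=0$, and yet $\|x_0\|_K=0$. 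The paper does not attempt your line argument; it handles this point in a single sentence, asserting that $\|x_0\|_K<1$ together with $x_0^*\neq y_0^*$ is incompatible with \eqref{eq:extreme_pair}. That assertion is justified precisely when $1_K^*$ vanishes only at the origin on the relevant set, which is the case in both of the paper's applications ($1_K^*$ is $\step\|\cdot\|_{TV}$ on $L^2(\RR^2)$ for ROF, and $\step W_1$ on differences of probability measures for WROF), so the issue does not affect the downstream results; but you have correctly identified the delicate point in the abstract statement.
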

\begin{remark}
Let the solution map to \eqref{prob:general_minimization_problem} as a function of $y_0^*$ be denoted $P$. Then the dichotomy presented in \Cref{thm:general_meyer} confirms that $P(P(y_0^*)) = P(y_0^*)$; i.e.~$P$ is a projection.
\end{remark}
Before proving \Cref{thm:general_meyer}, we show how it yields \Cref{cor:meyer_for_ROF}.
\begin{proof}[\textit{Proof of \Cref{cor:meyer_for_ROF}}]
Recalling \Cref{ex:how_ROF_is_general} and \Cref{ex:D_comp_for_ROF}, we have that \eqref{prob:ROF} is of the form \eqref{prob:general_minimization_problem} for $y_0^* = 0$, and
\begin{equation*}
    F(u) = \frac{1}{2}\norm{u}_{L^2(\RR^2)}^2 + \langle f, u\rangle, F^*(u) = \frac{1}{2}\norm{u-f}_{L^2(\RR^2)}^2
\end{equation*}
\begin{equation*}
    K:= \{ v \in L^2(\RR^2) \mid \norm{v}_* \leq \step\}.
\end{equation*}
\begin{equation*}
    D(v,u)= \begin{cases} \frac{1}{2}\norm{u-v}^2_{L^2(\RR^2)} &\quad \norm{u-f}_* \leq \step,\\
    +\infty &\quad  \norm{u-f}_* > \step
    \end{cases}
\end{equation*}
The problem \eqref{prob:general_dual} therefore takes the form
\begin{equation*}
    \max_{\norm{v}_* \leq \step} -\frac{1}{2}\norm{v}_{L^2(\RR^2)}^2 - \langle v, f \rangle.
\end{equation*}
This problem has a unique solution $\tilde{v}_\step$ since $K$ is convex, non-empty and closed, and the function $v \mapsto \frac{1}{2}\norm{v}_{L^2(\RR^2)}^2 + \langle v, f \rangle$ is continuous, strictly convex, and coercive on $L^2(\RR^2)$. We may therefore apply \Cref{thm:general_meyer} to obtain that \eqref{prob:ROF} has a unique solution given by 
\begin{equation*}
    u_\step = f + \tilde{v}_\step.
\end{equation*}
Given our calculation for $D$ in \Cref{ex:D_comp_for_ROF}, we obtain that $u_\step$ is also a solution of the problem in \eqref{prob:general_projection}, which is
\begin{equation*}
    \min_{\norm{u-f}_* \leq \step} \norm{u}_{L^2(\RR^2)}^2. 
\end{equation*}
Thus, if $\norm{f}_* \leq \step$, it is clear that $u_\step= 0$. On the other hand, $\norm{f}_* >\step$ if and only if $\partial F^*(0) \cap K = \emptyset$. Using \Cref{thm:general_meyer}, we obtain that $\tilde{v}_\step\in K$ satisfies $\norm{\tilde{v}_\step}_K = 1$, and
\begin{equation*}
   \int_{\RR^2} u_\step(f-u_\step)dx = \langle \tilde{v}_\step, -u_\step \rangle = \step \norm{u_\step}_{TV}.
\end{equation*}
Finally, we compute $\norm{v}_K = \norm{v}_*/\step$. As such, $\norm{\tilde{v}_\step}_K = 1$ is equivalent to $\norm{\tilde{v}_\step}_* = \step$, and the proof is complete. 
\end{proof}

\begin{proof}[\textit{Proof of \Cref{thm:general_meyer}}]
We start by proving statement \ref{item:unique_soln_is_subdiff} We note that this result is obtainable using Theorem 2.7.1 of \cite{zalinescu2002convex}, but provide an elementary proof here. Let $x_0$ be a solution to \eqref{prob:general_dual}. Then, equivalently, $x_0$ solves
\begin{equation*}
    \min_{x \in X} F(x) - \langle x, y_0^*\rangle + 1_K(x).
\end{equation*}
Noting that $x \mapsto F(x) - \langle x, y_0^* \rangle$ and $1_K(x)$ are both proper convex functions, and that the former is finite and continuous, we can apply part (iii) of Theorem 2.8.7 from \cite{zalinescu2002convex} to conclude that
\begin{equation}
    \partial \left(F - \langle \cdot, y_0^* \rangle + 1_K\right) = \partial F - \{y_0^*\} + \partial 1_K.
\end{equation}
Since $F^*$ is strictly convex, $\partial F(x_0)$ contains at most one element. Further, since $F$ is convex, proper, and continuous, Theorem 2.4.9 from \cite{zalinescu2002convex} shows that $\partial{F}(x)$ is non-empty for all $x \in X$, and thus $\partial F(x)$ contains a unique element for all $x \in X$. Since $x_0$ is a solution of \eqref{prob:general_dual}, the unique element $x_0^* \in \partial F(x_0)$ satisfies
\begin{equation*}
    0 \in x_0^* - y_0^* + \partial 1_K(x_0).
\end{equation*}
Since $K = -K$, we have $\partial 1_K(-x_0) = -\partial 1_K(x_0)$. Thus,
\begin{equation}
    x_0^* - y_0^* \in \partial 1_K(-x_0).\label{eq:extreme_point}
\end{equation}
Next, since $H:X\rightarrow \RR \cup \{+\infty\}$ is proper, convex, and lower semi-continuous, then for all $x$ such that $H(x)< +\infty$ we have the well known fact that\footnote{See, for example, Theorem 2.4.4 from \cite{zalinescu2002convex} for a proof of this in our setting.}
\begin{equation}
    x^* \in \partial H(x) \Leftrightarrow x \in \partial H^*(x^*) \Leftrightarrow H(x) + H^*(x^*) = \langle x, x^*\rangle.\label{eq:inversion_of_subdiff}
\end{equation}
We apply this to $H=1_K$, which is proper because $K$ is non-empty, convex because $K$ is convex, and lower semi-continuous because $K$ is closed. Thus, \eqref{eq:extreme_point} yields
\begin{equation*}
    -x_0 \in \partial 1_K^*(x_0^* - y_0^*).
\end{equation*}
It is an elementary fact that $\left(\partial H(\cdot - y^*)\right)(x^*) = \partial H(x^* - y^*)$. Recalling that $x_0^* \in \partial F(x_0)$ and using \eqref{eq:inversion_of_subdiff} again, we get
\begin{equation*}
    0 \in \partial F^*(x_0^*) + \partial 1_K^*(x_0^* - y_0^*) \subset \partial \left(F^* + 1_K^*(\cdot - y_0^*) \right)(x_0^*),
\end{equation*}
which confirms that $x_0^*$ is a minimizer of \eqref{prob:general_minimization_problem}. By the assumed strict convexity of $F^*$, $x_0^*$ is the unique minimizer. We have shown that if $x_0$ solves \eqref{prob:general_dual}, then the unique $x_0^* \in \partial F(x_0)$ solves \eqref{prob:general_minimization_problem}, which proves statement \ref{item:unique_soln_is_subdiff}

% Conversely, if $x_0^*$ solves \eqref{prob:general_dual} then by uniqueness of such an $x_0^*$ and the existence of a solution $x_0$ to \eqref{prob:general_dual}, we have that $x_0^
% * \in \partial F(x_0)$. This proves the equivalence of (a) and (b). 

Statements b and c will be proven together. Regarding the values of \eqref{prob:general_minimization_problem} and \eqref{prob:general_dual}, note that by definition of the Legendre dual, for all $x^* \in X^*$ and $x \in K$,
\begin{align*}
    F^*(x^*) + 1_K^*(x^*-y_0^*) &\geq \langle x, x^* \rangle - F(x) + \langle -x, x^* -y_0^* \rangle,\\
    &= \langle x, y_0^* \rangle - F(x).
\end{align*}
Hence,
\begin{equation*}
    \inf_{x^* \in X^*} F^*(x^*) + 1_K^*(x^*-y_0^*) \geq \sup_{x \in K}\langle x, y_0^* \rangle - F(x).
\end{equation*}
On the other hand, for $x_0$ optimal in \eqref{prob:general_dual} and $x_0^* \in \partial F(x_0)$ optimal in \eqref{prob:general_minimization_problem},  \eqref{eq:inversion_of_subdiff} implies
\begin{align*}
    F^*(x_0^*) + 1_K^*(x_0^*-y_0^*) &= \langle x_0, x_0^* \rangle - F(x_0) + \langle -x_0, x_0^* -y_0^* \rangle,\\
    &= \langle x_0, y_0^* \rangle - F(x_0).
\end{align*}
This establishes that the values of \eqref{prob:general_minimization_problem} and \eqref{prob:general_dual} are the same. 

Next, we turn to \eqref{prob:general_projection}. Invoking \eqref{eq:inversion_of_subdiff} again, and recalling that $\partial F(x)$ contains a unique element for all $x \in X$, we obtain that for each $x \in K$, there exists $x^* \in \partial F(x)$ such that $\partial F^*(x^*) \cap K \neq \emptyset$. As such,
\begin{align*}
    \langle x, y_0^*\rangle - F(x) &= \langle x, y_0^*-x^*\rangle + F^*(x^*),\\
    &=F^*(y_0^*) - \left( F^*(y_0^*)- F^*(x^*)  - \langle x, y^*_0-x^*\rangle \right),\\
    &\leq F^*(y_0^*) - (F^*(y_0^*) - F^*(x^*)- \sup_{z \in \partial F^*(x^*)\cap K}\langle z, y^*_0 - x^*\rangle.
\end{align*}
Thus,
\begin{equation}
    \sup_{x \in K}\langle x, y_0^*\rangle - F(x) \leq F(y_0^*) - \inf_{x^*\in X^*}D(y_0^*, x^*).
\end{equation}
On the other hand, for $x^* \in X^*$ with $\partial F^*(x^*) \cap K \neq \emptyset$, let $\epsilon >0$ and take $x\in \partial F^*(x^*) \cap K$ satisfying
\begin{equation*}
    \sup_{z \in \partial F^*(x^*)\cap K}\langle z, y^*_0 - x^*\rangle - \epsilon \leq \langle x, y^*_0 - x^* \rangle.
\end{equation*}
Then
\begin{align*}
    F(y_0^*) - D(y_0^*, x^*) &\leq F(y_0^*) - (F^*(y_0^*) - F^*(x^*)- \langle x, y^*_0 - x^*\rangle - \epsilon),\\
    &= \langle x, y_0^* \rangle - F(x) +\epsilon.
\end{align*}
Hence,
\begin{equation*}
     \sup_{x \in K}\langle x, y_0^*\rangle - F(x) + \epsilon \geq F(y_0^*) - \inf_{x^*\in X^*}D(y_0^*, x^*).
\end{equation*}
Since $\epsilon$ is arbitrary, we obtain equality of the values of $F(y_0^*) - \inf_{x^*}D(y_0^*, x^*)$ and \eqref{prob:general_dual}. Finally, if $x_0$ solves \eqref{prob:general_dual}, then we know that $x_0^* \in \partial F(x_0)$ solves \eqref{prob:general_minimization_problem}. We also have
\begin{align*}
    \sup_{x\in K}\langle x, y_0^* \rangle - F(x) &= \langle x_0, y_0^* \rangle - F(x_0),\\
    &\leq F^*(y_0^*) - D(y_0^*, x_0^*),\\
    &\leq F^*(y_0^*) - \inf_{x^*\in \text{dom}(F^*)} D(y_0^*, x^*),\\
    &=  \sup_{x\in K}\langle x, y_0^* \rangle - F(x).
\end{align*}
Equality of the first expression and the last mean that each inequality is an equality; thus $x_0^*$ solves \eqref{prob:general_projection} as claimed, which completes the proof of statements b and c.

We now address the dichotomy. If $\partial F^*(y_0^*) \cap K \neq \emptyset$, then by \Cref{lem:general_divergence_lemma} the only minimizer of \eqref{prob:general_projection} is $y_0^*$. So suppose $\partial F^*(y_0^*) \cap K = \emptyset$. Then it is clear that $x_0^* \neq y_0^*$, since $D(y_0^*, x_0^*) < +\infty$, and hence $\partial F^*(x_0^*) \cap K \neq \emptyset$. For any solution $x_0$ to \eqref{prob:general_dual}, we obtain $x_0 \in \partial F^*(x_0^*)$ by \eqref{eq:inversion_of_subdiff}. We also have \eqref{eq:extreme_point}, and thus via the last equality of \eqref{eq:inversion_of_subdiff},
\begin{equation*}
    \langle -x_0, x_0^* - y_0^* \rangle = 1_K^*(x_0^* - y_0^*),
\end{equation*}
which is \eqref{eq:extreme_pair}. Since $x_0 \in K$, we have $\norm{x_0}_K \leq 1$. On the other hand, if $\norm{x_0}_K <1$, then since $x_0^* \neq y_0^*$ there is no possibility of \eqref{eq:extreme_pair} holding.
\end{proof}

\section{Proof of \Cref{prop:meyer-for-multiscale-simplified}}
\label{sec:applying_general_to_WROF}
In \Cref{sec:prelim-for-proving-meyer} we will demonstrate that the hypotheses of \Cref{thm:general_meyer} hold for \eqref{prob:initial-multiscale-statement}. In addition, we will describe the divergence $D$ in this context. In \Cref{sec:applying_general_meyer_to_multiscale} we will use these preliminaries to complete the proof of \Cref{prop:meyer-for-multiscale-simplified}.
\subsection{Preliminaries}
\label{sec:prelim-for-proving-meyer}
Recall that in the context of \eqref{prob:initial-multiscale-statement}, $K = \steplip$, and $F: C(\Omega) \rightarrow \RR$ given by
\begin{equation*}
    F(\varphi) = - \int_\Omega \varphi^{c_2} d\mu.
\end{equation*}
We mentioned in \Cref{ex:how_multiscale_is_example} that $F$ defined in this way is convex and continuous, and that $F^*$ is strictly convex provided $\mu \ll \lebesgue$ and $\lebesgue(\partial \Omega) = 0$. Further, $K = \steplip$ is closed, convex, and non-empty, and satisfies $K = -K$. The only remaining hypothesis of \Cref{thm:general_meyer} to verify is that \eqref{prob:general_dual} has a solution. In this setting \eqref{prob:general_dual} takes the form
\begin{equation}
\sup_{\varphi \in \steplip} \int_\Omega \varphi^{c_2} d\mu + \int_\Omega \varphi d\nu.\label{prob:firstdual}
\end{equation}
The existence of a solution could be proved by standard arguments, but we will do so by re-writing \eqref{prob:firstdual} as an unconstrained problem in terms of the Huber cost function $\hybrid$ (see \eqref{eq:hybridcost} for the definition); this will be useful to us later.

For $\mu, \rho \in \mathcal{P}(\Omega)$, let $\cI_{\hybrid}(\mu,\rho)$ be the transport cost, i.e.
\begin{equation*}
    \cI_{\hybrid}(\mu,\rho) := \inf_{\gamma \in \Pi(\mu,\rho)}\int_{\Omega \times \Omega}\hybrid(x,y) d\gamma(x,y),
\end{equation*}
where $\Pi(\mu,\rho)$ is the set of probability distributions on $\Omega \times \Omega$ with marginal distributions given by $\mu$ and $\rho$. Since $c_2(x,y) \geq \hybrid(x,y)$, we have $\squaredw{\mu}{\rho}  \geq \cI_{\hybrid}(\mu,\rho)$. We now prove an easy fact about the $\hybrid$-transform.
\begin{lemma}
\label{lem:equality_of_c_transforms}
 If $\Omega$ is convex and $\varphi \in \steplip$, then 
 \begin{equation}
 \varphi^{\hybrid}(x) = \varphi^{c_2}(x) \quad \forall x \in \Omega \label{eq:equality_of_c_transforms}
 \end{equation}
 where, recall,
 \begin{equation*}
     \varphi^{\hybrid}(x) := \inf_{y \in \Omega}\hybrid(x,y) - \varphi(y).
 \end{equation*}
\end{lemma}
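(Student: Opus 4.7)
The plan is to show the two inequalities $\varphi^{\hybrid}(x) \leq \varphi^{c_2}(x)$ and $\varphi^{\hybrid}(x) \geq \varphi^{c_2}(x)$ separately. The first is immediate from the pointwise inequality $\hybrid(x,y) \leq c_2(x,y)$ (which is evident from the definition of $\hybrid$: equality holds when $|x-y|\leq\step$, and when $|x-y|>\step$ the function $t\mapsto \step t-\step^2/2$ lies below $t^2/2$ since their difference vanishes at $t=\step$ with zero derivative and the quadratic has larger second derivative). Taking infima in $y$ over $\Omega$ gives $\varphi^{\hybrid}(x)\leq \varphi^{c_2}(x)$.

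The substantive step is the reverse inequality. The idea is that for any $y\in\Omega$ with $|x-y|>\step$, I can replace $y$ by a nearby point $y'$ at distance exactly $\step$ from $x$ and use the $\step$-Lipschitz control of $\varphi$ to show that the candidate value at $y'$ (for the $c_2$-transform) is no larger than the candidate at $y$ (for the $\hybrid$-transform). Concretely, given $x\in\Omega$ and $y\in\Omega$ with $|x-y|>\step$, set
\begin{equation*}
y' := x + \step\,\frac{y-x}{|y-x|}.
\end{equation*}
Then $y'$ lies on the segment $[x,y]$, and since $\Omega$ is convex, $y'\in\Omega$. By construction $|x-y'|=\step$ and $|y-y'|=|x-y|-\step$, so $c_2(x,y')=\hybrid(x,y')=\step^2/2$. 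Using $\varphi\in\steplip$, I get $\varphi(y)-\varphi(y')\leq \step\,(|x-y|-\step)$, hence
\begin{equation*}
\hybrid(x,y)-\varphi(y) \geq \step|x-y|-\tfrac{\step^2}{2} - \varphi(y') - \step(|x-y|-\step) = \tfrac{\step^2}{2} - \varphi(y') = c_2(x,y')-\varphi(y') \geq \varphi^{c_2}(x).
\end{equation*}
When $|x-y|\leq \step$, the identity $\hybrid(x,y)=c_2(x,y)$ gives $\hybrid(x,y)-\varphi(y)=c_2(x,y)-\varphi(y)\geq\varphi^{c_2}(x)$ trivially. Taking infimum over $y\in\Omega$ yields $\varphi^{\hybrid}(x)\geq\varphi^{c_2}(x)$.

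The main obstacle is verifying that the "truncation" point $y'$ actually belongs to $\Omega$; this is precisely where the hypothesis that $\Omega$ is convex enters, since $y'$ is a convex combination of $x$ and $y$. The Lipschitz hypothesis on $\varphi$ with the sharp constant $\step$ is what makes the linear part of $\hybrid$ at distances beyond $\step$ exactly the right penalty to dominate the difference $\varphi(y)-\varphi(y')$. These are the only two structural ingredients needed; everything else is an algebraic manipulation of the piecewise definition of $\hybrid$.
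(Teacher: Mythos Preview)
Your proof is correct and follows essentially the same approach as the paper: both arguments truncate a far-away $y$ to the point $y'$ (the paper calls it $z$) at distance exactly $\step$ from $x$, use convexity of $\Omega$ to ensure this truncated point stays in the domain, and then use the $\step$-Lipschitz bound on $\varphi$ to control the difference. The only organizational difference is that the paper shows both infima $\varphi^{c_2}$ and $\varphi^{\hybrid}$ can be restricted to $\Omega\cap B_\step(x)$ and then invokes $\hybrid=c_2$ there, whereas you dispatch one direction immediately via $\hybrid\leq c_2$ and only perform the truncation once; your route is slightly more economical but the substance is identical.
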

\begin{proof}
Let $B_\step(x)$ be the closed Euclidean ball of radius $\step$ centred at $x$. We claim that $\varphi\in \steplip$ implies that for all $x \in \Omega$, 
\begin{equation}
\varphi^{c_2}(x) = \inf_{y \in \Omega \cap B_\step(x)} c_2(x,y) - \varphi(y). \label{eq:infforc2takenovertauball}
\end{equation}
Indeed, for $y \in \Omega \setminus B_\step(x)$, let $z$ be the projection of $y$ onto $B_\step(x)$ in the Euclidean norm; note that $z \in \Omega$ by convexity of $\Omega$. Since $c_2$ is convex and differentiable in its second variable, we have
\begin{equation*}
c_2(x, y) \geq c_2(x,z) + \langle z-x, y-z \rangle = c_2(x,z) + \step |z-y|,
\end{equation*}
so
\begin{equation*}
c_2(x,y) -\varphi(y) \geq c_2(x,z) + \step |z-y| - \varphi(y) \geq c_2(x,z) - \varphi(z).
\end{equation*}
This proves \eqref{eq:infforc2takenovertauball}. We can also prove, by a nearly identical argument, that the infimum in 
\begin{align*}
\varphi^{\hybrid}(x) &= \inf_{y\in \Omega} \hybrid(x,y) - \varphi(y),
\end{align*}
can also be restricted to $B_\step(x)\cap \Omega$.  Since $\hybrid(x,y) = c_2(x,y)$ when $|x-y| \leq \step$, the conclusion follows.
\end{proof}
An immediate consequence of the preceding lemma is that we can re-write \eqref{prob:firstdual} as an unconstrained problem in terms of the cost $\hybrid$. 
\begin{lemma}
\label{lem:first_dual_and_second_are_equivalent}
When $\Omega$ is convex, the problems \eqref{prob:firstdual} and 
\begin{equation}
\sup_{\varphi \in C(\Omega)} \int_\Omega \varphi^{\hybrid} d\mu + \int_\Omega \varphi d\nu\label{prob:seconddual}
\end{equation}
are equivalent; they have the same value, a solution to \eqref{prob:firstdual} is a solution to \eqref{prob:seconddual}, and a $\hybrid$-concave solution to \eqref{prob:seconddual} is a solution to \eqref{prob:firstdual}. 
\end{lemma}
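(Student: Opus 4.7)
The plan is to use the preceding lemma (which gives $\varphi^{\hybrid}=\varphi^{c_2}$ for $\varphi\in\steplip$) together with the standard facts $\psi\leq \psi^{\hybrid\hybrid}$ and $\psi^{\hybrid\hybrid\hybrid}=\psi^{\hybrid}$ to pass between the constrained and unconstrained formulations. The first key auxiliary observation I would record is that for every $\psi\in C(\Omega)$, the $\hybrid$-transform $\psi^{\hybrid}$ lies in $\steplip$. This is immediate from the fact that $x\mapsto \hybrid(x,y)$ is $\step$-Lipschitz for each fixed $y$ (its gradient, where defined, has magnitude $\min(|x-y|,\step)\leq \step$), so $\psi^{\hybrid}$, being an infimum of $\step$-Lipschitz functions, is itself $\step$-Lipschitz.

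With these two inputs in hand, the first direction of the equivalence is easy: if $\varphi\in\steplip$, then by \Cref{lem:equality_of_c_transforms} the functional of \eqref{prob:firstdual} evaluated at $\varphi$ equals that of \eqref{prob:seconddual} evaluated at $\varphi$. Taking suprema gives that the value of \eqref{prob:firstdual} is at most that of \eqref{prob:seconddual}, and it also shows that any solution $\varphi_\ast$ of \eqref{prob:firstdual} attains, inside \eqref{prob:seconddual}, the common value, so it solves \eqref{prob:seconddual} provided we also establish the reverse inequality.

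For the reverse inequality I would replace an arbitrary candidate $\psi\in C(\Omega)$ in \eqref{prob:seconddual} by its double $\hybrid$-transform $\psi^{\hybrid\hybrid}$. Since $\psi\leq \psi^{\hybrid\hybrid}$, one has $\int\psi\,d\nu\leq \int \psi^{\hybrid\hybrid}\,d\nu$; since $\psi^{\hybrid\hybrid\hybrid}=\psi^{\hybrid}$, the $\mu$-integral term is unchanged. Thus the objective does not decrease, and by the auxiliary observation above $\psi^{\hybrid\hybrid}\in\steplip$. Applying \Cref{lem:equality_of_c_transforms} to this $\hybrid$-concave, $\step$-Lipschitz function, its $\hybrid$-transform and $c_2$-transform coincide, so $\psi^{\hybrid\hybrid}$ is admissible in \eqref{prob:firstdual} and achieves there the same objective value. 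This yields that the value of \eqref{prob:seconddual} is at most the value of \eqref{prob:firstdual}, completing the proof that the values agree.

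The final clause about $\hybrid$-concave maximizers of \eqref{prob:seconddual} follows by the same mechanism: if $\psi$ is $\hybrid$-concave then $\psi=\chi^{\hybrid}$ for some $\chi\in C(\Omega)$, so by the auxiliary observation $\psi\in\steplip$; then \Cref{lem:equality_of_c_transforms} guarantees the two objectives agree at $\psi$, so $\psi$ is feasible and optimal for \eqref{prob:firstdual}. I do not foresee any genuine obstacle — the argument is entirely a bookkeeping exercise in $c$-duality — so the only mildly delicate point is verifying Lipschitzness of $\psi^{\hybrid}$, which reduces to the explicit form of $\nabla_x \hybrid$ coming from \eqref{eq:hybridcost}.
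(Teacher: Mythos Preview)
Your proposal is correct and follows essentially the same approach as the paper: reduce the unconstrained problem to $\hybrid$-concave (hence $\step$-Lipschitz) candidates via the double transform, and then invoke \Cref{lem:equality_of_c_transforms} to identify the two objectives. The paper's proof is terser (it simply says one may take $\varphi$ to be $\hybrid$-concave ``without loss of generality'' and observes $\hybrid(x,y)=h(|x-y|)$ with $h\in\step\text{-Lip}$), whereas you spell out the two inequalities and the roles of $\psi\le\psi^{\hybrid\hybrid}$ and $\psi^{\hybrid\hybrid\hybrid}=\psi^{\hybrid}$ explicitly, but the substance is identical.
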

\begin{proof}
Let $\varphi \in C(\Omega)$ be a candidate for maximizing \eqref{prob:seconddual}. Without loss of generality we may take $\varphi$ $\hybrid$-concave, and since $\hybrid(x,y) = h(|x-y|)$ for $h\in \step\text{-Lip}(\RR^+)$, we obtain $\varphi \in \steplip$ as well. So \eqref{prob:seconddual} can be re-written as
\begin{equation*}
    \sup_{\varphi \in \steplip} \int_\Omega \varphi^{\hybrid} d\mu + \int_\Omega \varphi d\nu
\end{equation*}
We have already shown in \Cref{lem:equality_of_c_transforms} that when $\Omega$ is convex and $\varphi \in \steplip$,  $\varphi^{\hybrid} = \varphi^{c_2}$. This establishes the equivalence of the problems. 
\end{proof}
The existence of a solution to \eqref{prob:firstdual} now follows from the existence of a Kantorovich potential for the transport problem $\cI_{\hybrid}(\mu,\nu)$.

\begin{lemma}
\label{lem:solution_to_first_dual_exists}
Let $\Omega\subset \RR^n$ be compact and convex. For $\mu, \nu \in \mathcal{P}(\Omega)$, problem \eqref{prob:firstdual} has a solution.
\end{lemma}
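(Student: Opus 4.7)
The plan is to reduce to \Cref{lem:first_dual_and_second_are_equivalent} and invoke the standard existence theorem for Kantorovich potentials. First I would observe that since $\Omega$ is compact, the Huber cost $\hybrid$ is continuous (indeed Lipschitz) on $\Omega\times\Omega$, and is bounded. The optimal transport problem
\begin{equation*}
    \cI_{\hybrid}(\mu,\nu) = \inf_{\gamma \in \Pi(\mu,\nu)} \int_{\Omega\times\Omega}\hybrid(x,y)\,d\gamma(x,y)
\end{equation*}
then fits into the classical Kantorovich duality framework (for example, Theorem 1.3 in \cite{villani2009optimal} or Theorem 1.39 in \cite{santambrogio2015optimal}), which yields
\begin{equation*}
    \cI_{\hybrid}(\mu,\nu) = \sup_{\varphi \in C(\Omega)} \int_\Omega \varphi^{\hybrid}\,d\mu + \int_\Omega \varphi\,d\nu,
\end{equation*}
with the supremum attained by some $\hybrid$-concave function $\varphi_0 \in C(\Omega)$. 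This attainment is a standard consequence of the Arzel\`a-Ascoli theorem applied to a maximizing sequence which, without loss of generality, one may take to consist of $\hybrid$-concave functions; these are equi-continuous and uniformly bounded because $\hybrid$ is uniformly continuous on the compact set $\Omega \times \Omega$ and one may normalize by adding a constant.

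Having obtained such a $\hybrid$-concave maximizer $\varphi_0$ of \eqref{prob:seconddual}, I would then apply the second half of \Cref{lem:first_dual_and_second_are_equivalent}, which asserts precisely that a $\hybrid$-concave solution to \eqref{prob:seconddual} is also a solution to \eqref{prob:firstdual}. This immediately yields existence of a maximizer for \eqref{prob:firstdual} and completes the proof.

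I do not expect any substantive obstacle: the existence of Kantorovich potentials for continuous costs on compact spaces is textbook material, and \Cref{lem:first_dual_and_second_are_equivalent} is already in hand. The only subtlety worth being careful about is not mistakenly working with the unconstrained problem \eqref{prob:seconddual} directly (where a candidate $\varphi$ need not be Lipschitz) rather than its $\hybrid$-concave reduction, since the equivalence in \Cref{lem:first_dual_and_second_are_equivalent} transfers solutions only in the $\hybrid$-concave direction.
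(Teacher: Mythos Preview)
Your proposal is correct and follows essentially the same approach as the paper: invoke the standard existence of a $\hybrid$-concave Kantorovich potential for the continuous bounded cost $\hybrid$ on compact $\Omega\times\Omega$ (Theorem 1.39 of \cite{santambrogio2015optimal}), and then apply \Cref{lem:first_dual_and_second_are_equivalent} to transfer this solution to \eqref{prob:firstdual}. Your added remarks on Arzel\`a--Ascoli merely spell out why that existence theorem holds, but the argument is the same.
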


\begin{proof}
Since the cost $\hybrid$ is uniformly continuous and bounded on $\Omega \times \Omega$, we may use Theorem 1.39 of \cite{santambrogio2015optimal} to conclude that there exists a $\hybrid$-concave function $\varphi_\step$ such that
\begin{align*}
    \cI_{\hybrid}(\mu,\nu) &= \sup_{\varphi \in C(\Omega)} \int_\Omega \varphi^{\hybrid} d\mu + \int_\Omega \varphi d\nu,\\
    &=\int_\Omega \varphi_\step^{\hybrid} d\mu + \int_\Omega \varphi_\step d\nu.
\end{align*}
By \Cref{lem:first_dual_and_second_are_equivalent}, $\varphi_\step$ is a solution of \eqref{prob:firstdual}.
\end{proof}

The hypotheses of \Cref{thm:general_meyer} being validated, we may apply it to \eqref{prob:initial-multiscale-statement}, and we will do so in \Cref{sec:applying_general_meyer_to_multiscale}. As a preliminary step, however, it will be helpful to specify the subdifferential of $F$, since the minimizer of \eqref{prob:initial-multiscale-statement} will be given by $\partial F(\varphi_\step)$ if $\varphi_\step$ solves \eqref{prob:firstdual}.  
\begin{lemma}
\label{lem:subdiff_of_H}
For $\varphi \in C(\Omega)$, $\partial F(\varphi)$ is non-empty, and 
\begin{equation}
\partial F(\varphi) = \{ \rho \in \mathcal{P}(\Omega) \mid \int_\Omega \varphi^{c_2} d\mu + \int_\Omega \varphi d\rho = \frac{1}{2}W_2^2(\rho,\mu)\}. \label{eq:subdiff_of_H}
\end{equation}
Further, if $\partial \Omega$ has Lebesgue measure $0$, $\mu \ll \lebesgue$, and $T:\Omega\rightarrow\Omega$ is any Borel map $\mu$ almost everywhere equal to $I-\nabla \varphi^{c_2}$, then
\begin{equation*}
    \partial F(\varphi) = \{ T_\# \mu\}.
\end{equation*}
\end{lemma}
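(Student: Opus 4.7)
The plan is to apply Fenchel--Moreau duality throughout. Since $F$ is proper, convex, and continuous on $C(\Omega)$, I would invoke $F = F^{**}$ to deduce that $\rho \in \partial F(\varphi)$ if and only if $F(\varphi) + F^*(\rho) = \langle \varphi, \rho \rangle$. From \Cref{ex:how_multiscale_is_example}, $F^*(\rho) = \tfrac{1}{2}W_2^2(\rho, \mu)$ for $\rho \in \mathcal{P}(\Omega)$ and $+\infty$ otherwise, so $\partial F(\varphi) \subset \mathcal{P}(\Omega)$ automatically, and rewriting the Legendre equality produces precisely \eqref{eq:subdiff_of_H}. For non-emptiness I would cite the same standard fact used in the proof of \Cref{thm:general_meyer}, namely Theorem~2.4.9 of \cite{zalinescu2002convex}: a proper convex function that is continuous at a point is subdifferentiable there. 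Since $F$ is continuous on all of $C(\Omega)$, this yields $\partial F(\varphi) \neq \emptyset$.

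For the singleton statement, I would first handle uniqueness separately from existence. The hypotheses $\mu \ll \lebesgue$ and $\lebesgue(\partial\Omega)=0$ ensure, via \Cref{ex:how_multiscale_is_example}, that $F^*$ is strictly convex, so the functional $\rho \mapsto F^*(\rho) - \langle \varphi, \rho \rangle$ admits at most one minimizer; combined with the Legendre characterization, this forces $\partial F(\varphi)$ to contain at most one point. It then suffices to verify $T_\#\mu \in \partial F(\varphi)$. Since $\varphi^{c_2}$ is Lipschitz on $\Omega$ (as the $c_2$-transform of a continuous function on a compact set) and $\mu(\partial\Omega)=0$, $\varphi^{c_2}$ is differentiable at $\mu$-a.e.\ $x$. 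At any such $x$, picking $y_0 \in \Omega$ attaining the infimum in $\varphi^{c_2}(x)$ (which exists by compactness of $\Omega$ and continuity of $\varphi$), the function $z \mapsto \tfrac{1}{2}|z-y_0|^2 - \varphi(y_0) - \varphi^{c_2}(z)$ is non-negative with equality at $z=x$, and setting its gradient to zero yields $y_0 = x - \nabla \varphi^{c_2}(x) = T(x)$. This is essentially the argument already carried out in \Cref{lem:uniquesolvability}, and it gives the pointwise identity $\varphi^{c_2}(x) + \varphi(T(x)) = \tfrac{1}{2}|x-T(x)|^2$ at $\mu$-a.e.\ $x$.

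Integrating this identity against $\mu$ and using the definition of pushforward gives
\[
\int_\Omega \varphi^{c_2}\,d\mu + \int_\Omega \varphi\,d(T_\#\mu) = \int_\Omega \tfrac{1}{2}|x-T(x)|^2\,d\mu(x) \geq \tfrac{1}{2}W_2^2(\mu, T_\#\mu),
\]
the last inequality since $(I,T)_\#\mu \in \Pi(\mu, T_\#\mu)$. Kantorovich duality supplies the reverse inequality, so equality holds throughout; by \eqref{eq:subdiff_of_H} this places $T_\#\mu$ in $\partial F(\varphi)$, and uniqueness finishes the proof. The main technical point to be careful about is the identification of the minimizer $y_0$ with $T(x)$; this rests entirely on differentiability of $\varphi^{c_2}$ almost everywhere and on $\mu$ ignoring both $\partial\Omega$ and the Lebesgue-null singular set of $\nabla \varphi^{c_2}$, which is exactly where the hypotheses $\mu \ll \lebesgue$ and $\lebesgue(\partial\Omega)=0$ are used.
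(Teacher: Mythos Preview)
Your proof is correct. The first half---non-emptiness via Theorem~2.4.9 of \cite{zalinescu2002convex} and the characterisation \eqref{eq:subdiff_of_H} via the Fenchel equality $F(\varphi)+F^*(\rho)=\langle\varphi,\rho\rangle$---is essentially identical to the paper's argument; the paper phrases it as $\rho\in\partial F(\varphi)\Leftrightarrow\varphi\in\partial F^*(\rho)$ and then invokes Proposition~7.17 of \cite{santambrogio2015optimal}, but that is the same computation.

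For the singleton part you diverge slightly. The paper observes that \eqref{eq:subdiff_of_H} says $\varphi^{c_2}$ is a Kantorovich potential for $W_2(\mu,\rho)$ and then quotes Brenier's theorem (Theorem~1.17 of \cite{santambrogio2015optimal}) to conclude $\rho=T_\#\mu$ directly. You instead split the claim into uniqueness (at most one element, from strict convexity of $F^*$) and existence ($T_\#\mu$ belongs to the set, verified by the pointwise equality $\varphi^{c_2}(x)+\varphi(T(x))=\tfrac12|x-T(x)|^2$ and a Kantorovich duality sandwich). Your route is a little more self-contained, as it re-derives precisely the part of Brenier's theorem needed here rather than citing it; the paper's route is shorter because it offloads that work to a reference. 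Both are standard and neither has a gap.
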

\begin{proof}
Since $F$ is convex, proper, and continuous everywhere, Theorem 2.4.9 from \cite{zalinescu2002convex} shows that $\partial{F}(\varphi)$ is non-empty for all $\varphi$. Since $F$ is a convex, proper, and continuous function, we invoke \eqref{eq:inversion_of_subdiff} to state that
\begin{equation*}
    \rho \in \partial F(\varphi) \Leftrightarrow \varphi \in \partial F^*(\rho) = \partial \left(\frac{1}{2}W_2^2(\cdot,\mu)\right)(\rho).
\end{equation*}
Via Proposition 7.17 from \cite{santambrogio2015optimal}, we obtain \eqref{eq:subdiff_of_H}. 

Thus, $\rho \in \partial F(\varphi)$ means that $\varphi^{c_2}$ is a Kantorovich potential for $W_2(\mu,\rho)$. Suppose in addition $\partial \Omega$ has Lebesgue measure $0$ and $\mu \ll \lebesgue$.  The characterisation of the optimal transport map for the cost $c_2(x,y) = \frac{1}{2}|x-y|^2$ in Theorem 1.17 of \cite{santambrogio2015optimal} then confirms that $\rho = T_\# \mu$, for any $T$ $\mu$ almost everywhere equal to $I-\nabla \varphi^{c_2}$.
\end{proof}
It will also be useful to study the divergence $D$ in the context of \eqref{prob:initial-multiscale-statement}, specifically where it is finite. This is the content of the next subsection.

\subsubsection{The divergence $D$ in the context of \eqref{prob:initial-multiscale-statement}}
\label{sec:divergence_in_WROF}
Here we will provide a characterisation of the set of measures $\rho$ such that $D(\nu, \rho)<+\infty$. We will also provide an economic interpretation of $D$ on this set.

First, set $B_\step(\mu)$ as the set of all measures $\rho$ that are reachable from $\mu$ under an optimal plan for the cost $\hybrid$ such that no point moves more than distance $\step$,
\begin{equation}
    B_\step(\mu)= \{  \rho \in \mathcal{P}(\Omega) \mid \exists \gamma_0 \text{ optimal for } \cI_{\hybrid}(\mu,\rho) \text{ s.t.} \spt(\gamma_0) \subset \{|x-y|\leq \step\}\}.\label{eq:ball_def}
\end{equation}
We consider $B_\step(\mu)$ because the following lemma shows that it is exactly the set of $\rho \in \mathcal{P}(\Omega)$ such that $\partial F^*(\rho) \cap K \neq \emptyset$, and thus $D(\nu,\rho)< +\infty$. In particular it is the set of measures $\rho$ such that $W_2^2(\rho, \mu)$ has an $\step$-Lipschitz Kantorovich potential. We also provide a third characterisation of $B_\step(\mu)$ as the set of all measures which are close enough to $\mu$ that there are no savings to be had using the discounted cost $\hybrid$. 

\begin{lemma}
\label{lem:alternative_char_of_Beta}
Let $\Omega$ be compact and convex. Then the following are equivalent
\begin{enumerate}
    \item $\rho \in B_\step(\mu)$,
    \item $\partial \left(\frac{1}{2}W_2^2(\cdot, \mu)\right)(\rho) \cap \steplip \neq \emptyset$, and,
    \item $\squaredw{\mu}{\rho} = \cI_{\hybrid}(\mu,\rho)$.
\end{enumerate}
\end{lemma}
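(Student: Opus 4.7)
The plan is to prove the equivalences by pivoting around condition (3), which is a concrete scalar identity. The two main ingredients are (i) the trivial pointwise bound $\hybrid(x,y) \leq c_2(x,y)$ with equality precisely when $|x-y|\leq \step$, and (ii) \Cref{lem:equality_of_c_transforms}, which tells us that for $\step$-Lipschitz functions the $c_2$- and $\hybrid$-transforms agree.

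First I would establish the chain (1) $\Leftrightarrow$ (3). The direction (1) $\Rightarrow$ (3) is immediate: if $\gamma_0$ is optimal for $\cI_{\hybrid}(\mu,\rho)$ with $\spt(\gamma_0) \subset \{|x-y|\leq \step\}$, then $\hybrid = c_2$ on $\spt(\gamma_0)$, so $\int c_2\, d\gamma_0 = \int \hybrid\, d\gamma_0 = \cI_{\hybrid}(\mu,\rho)$, and since $\frac{1}{2}W_2^2(\mu,\rho) \leq \int c_2\, d\gamma_0$ while the reverse inequality $\frac{1}{2}W_2^2(\mu,\rho) \geq \cI_{\hybrid}(\mu,\rho)$ is always true, we get equality. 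Conversely, if (3) holds, pick any $c_2$-optimal plan $\gamma_0$ (which exists since $c_2$ is continuous on the compact set $\Omega\times\Omega$); then $\int \hybrid\, d\gamma_0 \leq \int c_2\, d\gamma_0 = \frac{1}{2}W_2^2(\mu,\rho) = \cI_{\hybrid}(\mu,\rho)$, so $\gamma_0$ is also $\hybrid$-optimal and $\int (c_2 - \hybrid)\, d\gamma_0 = 0$. Since $c_2 - \hybrid \geq 0$ is zero precisely on $\{|x-y|\leq\step\}$, we conclude $\spt(\gamma_0) \subset \{|x-y|\leq\step\}$, yielding (1).

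Next I would handle (2) $\Leftrightarrow$ (3). For (2) $\Rightarrow$ (3), suppose $\varphi \in \partial(\tfrac{1}{2}W_2^2(\cdot,\mu))(\rho) \cap \steplip$. By \Cref{lem:subdiff_of_H} (or equivalently Proposition 7.17 of \cite{santambrogio2015optimal}), $\int \varphi^{c_2} d\mu + \int \varphi\, d\rho = \frac{1}{2}W_2^2(\mu,\rho)$. Since $\varphi \in \steplip$, \Cref{lem:equality_of_c_transforms} gives $\varphi^{c_2} = \varphi^{\hybrid}$, so the same quantity equals $\int \varphi^{\hybrid} d\mu + \int \varphi\, d\rho$, which is a lower bound for $\cI_{\hybrid}(\mu,\rho)$ by weak duality. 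Combined with $\cI_{\hybrid}(\mu,\rho)\leq \frac{1}{2}W_2^2(\mu,\rho)$, we get (3). For the converse (3) $\Rightarrow$ (2), invoke strong duality for the continuous bounded cost $\hybrid$ (Theorem 1.39 of \cite{santambrogio2015optimal}, as used in \Cref{lem:solution_to_first_dual_exists}) to produce a $\hybrid$-concave $\varphi$ with $\int \varphi^{\hybrid} d\mu + \int \varphi\, d\rho = \cI_{\hybrid}(\mu,\rho)$. Because $\hybrid(x,y)=h(|x-y|)$ with $h$ being $\step$-Lipschitz, any $\hybrid$-concave function lies in $\steplip$, so $\varphi^{\hybrid} = \varphi^{c_2}$ by \Cref{lem:equality_of_c_transforms}. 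Using (3), $\int \varphi^{c_2} d\mu + \int \varphi\, d\rho = \frac{1}{2}W_2^2(\mu,\rho)$, which by \eqref{eq:subdiff_of_H} in \Cref{lem:subdiff_of_H} says $\rho \in \partial F(\varphi)$, equivalently (via the inversion formula \eqref{eq:inversion_of_subdiff}) $\varphi \in \partial F^*(\rho) = \partial(\tfrac{1}{2}W_2^2(\cdot,\mu))(\rho)$.

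I do not expect real obstacles here; the argument is a textbook duality chase. The one point that requires a little care is justifying that a $\hybrid$-concave function is automatically $\step$-Lipschitz, but this follows at once from the representation $\hybrid(x,y)=h(|x-y|)$ with $h\in \step\text{-Lip}(\RR^+)$, exactly as noted in the proof of \Cref{lem:first_dual_and_second_are_equivalent}.
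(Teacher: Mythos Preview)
Your proof is correct and uses the same ingredients as the paper's proof: the pointwise bound $\hybrid\le c_2$ with equality on $\{|x-y|\le\step\}$, \Cref{lem:equality_of_c_transforms}, the subdifferential characterisation from \Cref{lem:subdiff_of_H}/Proposition~7.17 of \cite{santambrogio2015optimal}, and the fact that $\hybrid$-concave functions are $\step$-Lipschitz. The only difference is organisational---the paper runs the cycle $1\Rightarrow 2\Rightarrow 3\Rightarrow 1$, whereas you pivot around (3) via $(1)\Leftrightarrow(3)$ and $(2)\Leftrightarrow(3)$---but the arguments are essentially identical.
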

\begin{proof}
We will proceed by proving $1 \Rightarrow 2 \Rightarrow 3 \Rightarrow 1$. Let $\rho \in B_\step(\mu)$.  Let $\varphi$ be a $\hybrid$-concave function such that
\begin{equation*}
    \cI_{\hybrid}(\mu,\rho) = \int_\Omega \varphi^{\hybrid} d\mu + \int_\Omega \varphi d\rho.
\end{equation*}
Since $\varphi$ is $\hybrid$-concave we obtain that $\varphi \in \steplip$. For $\gamma_0$ the optimal plan transporting $\mu$ to $\rho$ from the definition of $B_\step(\mu)$, we have
\begin{align*}
    \squaredw{\mu}{\rho} &\leq \frac{1}{2}\int_{\Omega \times \Omega} |x-y|^2 d\gamma_0,\\
    &= \int_{\Omega \times \Omega} \hybrid(x,y) d\gamma_0,\\
    &= \int_\Omega \varphi^{\hybrid} d\mu + \int_\Omega \varphi d\rho,\\
    &= \int_\Omega \varphi^{c_2}d\mu + \int_\Omega \varphi d\rho,\\
    &\leq \squaredw{\mu}{\rho}.
\end{align*}
In the second to last line we have used \Cref{lem:equality_of_c_transforms}. Equality of the first and last terms mean we have equality throughout, and thus $\varphi \in \partial \left(\frac{1}{2}W_2^2(\cdot, \mu)\right)(\rho) \cap \steplip$. 

Second, if $\rho \in \mathcal{P}(\Omega)$ is such that there exists $\varphi$ satisfying
\begin{equation*}
    \varphi \in \partial \left(\frac{1}{2}W_2^2(\cdot, \mu)\right)(\rho) \cap \steplip,
\end{equation*}
then, using \Cref{lem:equality_of_c_transforms},
\begin{align*}
    \squaredw{\mu}{\rho}&=  \int_\Omega \varphi^{\hybrid}d\mu + \int_\Omega \varphi d\rho,\\
    &\leq \cI_{\hybrid}(\mu,\rho).
\end{align*}
Since $\squaredw{\mu}{\rho} \geq \cI_{\hybrid}(\mu,\rho)$ in general, we have $\squaredw{\mu}{\rho} = \cI_{\hybrid}(\mu,\rho)$. 

Finally, suppose $\squaredw{\mu}{\rho} = \cI_{\hybrid}(\mu,\rho)$. Since $\Omega$ is compact, there exists an optimal plan $\otplan\in \Pi(\mu,\rho)$ for $W_2(\mu, \rho)$. We compute
\begin{align}
    \squaredw{\mu}{\rho} &= \int_{\Omega \times \Omega} c_2(x,y) d\otplan(x,y),\nonumber\\
    &\geq \int_{\Omega \times \Omega} \hybrid(x,y) d\otplan(x,y),\label{eq:diff_is_where_larger_than_eta}\\
    &\geq \cI_{\hybrid}(\mu,\rho),\nonumber\\
    &= \squaredw{\mu}{\rho}.\nonumber
\end{align}
Equality of the first and last term means we have equality throughout. This indicates that $\gamma_0$ is optimal for $\cI_{\hybrid}(\mu,\rho)$, and
\begin{equation*}
    \otplan(\{ (x,y) \in \Omega \mid |x-y| > \step\}) = 0,
\end{equation*}
otherwise the inequality in \eqref{eq:diff_is_where_larger_than_eta} would be strict. Thus, $\rho \in B_\step(\mu)$.
\end{proof}

% If $\rho \in B_\step(\mu)$, then any plan $\gamma_0$ optimal for $\cI_{\hybrid}(\mu,\rho)$ is also optimal for $W_2(\mu,\rho)$, confirming that a Wasserstein 2 optimal plan with displacement bounded by $\step$ exists. The converse is not true; it is possible for $\rho$ to be reachable by a Wasserstein 2 optimal plan which has displacement bounded by $\step$ and yet $\rho \not \in B_\step(\mu)$. This is shown in the following example. 
% \begin{example}
% Choose $\step = 1$, and let $x_1, x_2, y_1, y_2 \in \RR^2$ be given by
% \begin{align*}
%     x_1 = (0,0),&\quad x_2 = (1, 0), \\
%     y_1 = (0,1),&\quad y_2 = (1+\cos(\theta), \sin(\theta)).
% \end{align*}
% Let $\mu = \frac{1}{2}(\delta_{x_1} + \delta_{x_2}), \nu = \frac{1}{2}(\delta_{y_1} + \delta_{y_2})$. It is not difficult to show that for $\theta$ close enough to $\pi$,
% \begin{align*}
%     c_2(x_1,y_1) + c_2(x_2,y_2) &< c_2(x_1, y_2) + c_2(x_2, y_1),\\
%     c(x_1,y_1) + c(x_2,y_2) &> c(x_1, y_2) + c(x_2, y_1).
% \end{align*}
% Thus, since $c(x_1,y_1) = c_2(x_1,y_1), c(x_2,y_2) = c_2(x_2,y_2)$, we have that $\squaredw{\mu}{\rho} > \cI_{\hybrid}(\mu,\rho)$, despite the optimal plan for $\squaredw{\mu}{\rho}$ satisfying \eqref{eq:bounded_displacement} for $\step = 1$. 
% \end{example}

In the  context of \eqref{prob:initial-multiscale-statement}, the divergence $D$ previously defined in \eqref{def:general_divergence} takes the following form:
\begin{align}
    \bregmandiv{}(\nu, \rho) &= \frac{1}{2}W_2^2(\nu, \mu) - \frac{1}{2}W_2^2(\rho, \mu) \nonumber\\
    &\quad -  \sup \{ \langle \varphi, \nu - \rho \rangle \mid \varphi \in \partial \left(\frac{1}{2}W_2^2(\cdot, \mu)\right)(\rho) \cap \steplip\}, \label{def:D_deff}
\end{align}
with $\mu\in \mathcal{P}(\Omega)$ a fixed reference measure. Here we have introduced the notation $D_\step$ to make the dependence of $D$ on the scale $\step$ explicit. 

We now detail the economic interpretation of \eqref{def:D_deff} that we mentioned in \Cref{sec:intro-characterisation-of-solution}. Here we assume that goods are manufactured with distribution $\mu$, purchased from the manufacturer with distribution $\rho$ and sold to consumers with distribution $\nu$. In this setting $D_\step(\nu,\rho)$ represents the total loss of value in a supply chain when the transport cost has an economy of scale and consumers adopt a ``buy local'' policy.

Indeed if anyone can move goods from $x$ to $y$ for a transport cost of $\hybrid(x,y)$, it is well known\footnote{See, for example, \cite{villani2009optimal}, page 65} that the maximum profit obtainable for transporting $\mu$ to $\rho$ while still being competitive with this global shipping rate  is $\cI_{\hybrid}(\mu,\rho)$, and that a  potential
\begin{equation*}
 \varphi \in \partial \left(\cI_{\hybrid}(\cdot, \mu)\right)(\rho)   
\end{equation*} represents an optimal sale price as a function of location. We suppose that instead of shipping directly to consumers, the manufacturer sells to a retailer, who purchases product with distribution $\rho$ and sells with distribution $\nu$, both at price $\varphi$. The profits obtained by the retailer are therefore
\begin{equation*}
    \langle \varphi, \nu - \rho\rangle.
\end{equation*}
Given $\mu$ and $\rho$, there may be several optimal prices $\varphi$, and since all of them result in the same benefit for the manufacturer, they allow the retailer to choose one that maximizes their profit. However, the manufacturer specifies that $\varphi \in \steplip$; otherwise the retailer may be able to exploit an arbitrage against the global shipping cost $\hybrid$. The profits of the retailer are then
\begin{equation*}
    \sup \{\langle \varphi, \nu - \rho\rangle \mid  \varphi \in \partial \left(\cI_{\hybrid}(\cdot, \mu)\right)(\rho)\cap \steplip\}.
\end{equation*}
We now suppose that the consumers impose a ``buy local'' policy, in the sense that they will not tolerate goods being shipped more than distance $\step$ to retailers. The retailer must modify $\rho$ to compensate for this, and by definition the only admissible distributions are those in $B_\step(\mu)$. If $\rho \in B_\step(\mu)$ however, \Cref{lem:equality_of_c_transforms} and \Cref{lem:alternative_char_of_Beta} show that   
\begin{equation*}
    \varphi \in \partial \left(\cI_{\hybrid}(\cdot, \mu)\right)(\rho)\cap \steplip \Leftrightarrow  \varphi \in \partial \left(\squaredw{\cdot}{\mu}\right)(\rho) \cap \steplip.
\end{equation*}
Since $\cI_{\hybrid}(\mu,\rho) = \squaredw{\mu}{\rho}$ for $\rho \in B_\step(\mu)$, the total profits for both manufacturer and retailer are
\begin{equation*}
    \squaredw{\mu}{\rho} + \sup \{\langle \varphi, \nu - \rho\rangle \mid  \varphi \in \partial\left( \squaredw{\cdot}{\mu}\right)(\rho)\cap \steplip\}.
\end{equation*}
Subtracting this from the baseline $\squaredw{\mu}{\nu}$, we see that $D_\step(\nu,\rho)$ is indeed the total loss of value when product is purchased by retailers at distribution $\rho$ and sold at distribution $\nu$ under a buy local policy for consumers and when transportation over scale $\step$ is discounted.

\subsection{Applying \Cref{thm:general_meyer} to  (WROF)}
\label{sec:applying_general_meyer_to_multiscale}

With $B_\step(\mu)$ and $\bregmandiv{}$ defined, we can finally apply \Cref{thm:general_meyer} to characterise $\rho_\step$, the unique minimizer of \eqref{prob:initial-multiscale-statement}, as a projection of $\nu$ onto $B_\step(\mu)$ with respect to the divergence $\bregmandiv{}$. The following result is a more detailed version of \Cref{prop:meyer-for-multiscale-simplified} from \Cref{sec:multiscale_introduction}.

\begin{theorem}
    
\label{prop:meyer-for-multiscale}
Let $\Omega$ be compact and convex with non-negligible interior, and suppose $\mu \ll \mathcal{L}_d$. Then
\begin{enumerate}[label=\alph*.]
    \item \eqref{prob:initial-multiscale-statement} has a unique solution $\rho_\step = (T_\step)_\#\mu$, where $T_\step = I- \nabla \varphi^{c_2}_\step$ almost everywhere and $\varphi_\step$ solves \eqref{prob:firstdual}
    \item $\rho_\step$ is also a solution to
    \begin{equation}
        \min_{\rho \in B_\step(\mu)}\bregmandiv{}(\nu,\rho)\label{prob:projection_problem}
    \end{equation}
    and
    \item the values of \eqref{prob:initial-multiscale-statement}, \eqref{prob:firstdual}, and $\squaredw{\nu}{\mu} - \bregmandiv{}(\nu,\rho_\step)$ coincide.
\end{enumerate}
Given statement b, we have the following dichotomy.
\begin{enumerate}
 \item If $\nu \in B_\step(\mu)$, then $\rho_\step= \nu$.
 \item Otherwise, $\rho_\step \neq \nu$. Furthermore, any solution $\varphi_\step$ to \eqref{prob:firstdual} satisfies $\varphi_\step \in \partial\left( \frac{1}{2}W_2^2(\cdot, \mu)\right)(\rho_\step)$, $\text{Lip}(\varphi_\step)= \step$, and
\begin{equation}
    \langle  \varphi_\step,  \nu - \rho_\step \rangle = \step W_1(\rho_\step, \nu).\label{eq:extremality_relation}
\end{equation}
\end{enumerate}
Finally, $T_\step$ is the unique optimal transport map for $W_2(\mu,\rho_\step)$, and satisfies \eqref{eq:displacement_bound_for_w2optimal}.% \begin{enumerate}[label=(\alph*)]
% \item $\rho_\step$ is the unique solution to \eqref{prob:initial-multiscale-statement},
% \item $\rho_\step = (T_\step)_\#\mu$, where $T_\step = I- \nabla \varphi^{c_2}_\step$ almost everywhere and $\varphi_\step$ solves \eqref{prob:firstdual}.
% \end{enumerate}
\end{theorem}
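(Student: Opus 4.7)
The plan is to apply \Cref{thm:general_meyer} to \eqref{prob:initial-multiscale-statement} using the identifications of \Cref{ex:how_multiscale_is_example}, namely $X = C(\Omega)$, $F(\varphi) = -\int_\Omega \varphi^{c_2} d\mu$, $K = \steplip$, and $y_0^* = \nu$. The preliminaries of \Cref{sec:prelim-for-proving-meyer} already establish all the hypotheses: $F$ is convex and continuous and $F^*$ is strictly convex since $\mu \ll \lebesgue$ and $\lebesgue(\partial \Omega) = 0$ (by convexity of $\Omega$); $K = \steplip$ is closed, convex, non-empty, and symmetric; and the existence of a solution $\varphi_\step$ to the corresponding problem \eqref{prob:general_dual}, which in this setting is exactly \eqref{prob:firstdual}, is furnished by \Cref{lem:solution_to_first_dual_exists}.

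With these hypotheses in hand, statements a, b, and c are essentially read off from \Cref{thm:general_meyer}. For a, the unique minimizer of \eqref{prob:initial-multiscale-statement} is the single element of $\partial F(\varphi_\step)$, and \Cref{lem:subdiff_of_H} identifies this element as $T_{\step,\#}\mu$ with $T_\step = I - \nabla \varphi_\step^{c_2}$ $\mu$-almost everywhere. For b, \Cref{thm:general_meyer} expresses $\rho_\step$ as the minimizer of $D(\nu,\cdot)$ over $\rho$ with $\partial F^*(\rho) \cap K \neq \emptyset$; by \Cref{lem:alternative_char_of_Beta} this constraint is exactly $\rho \in B_\step(\mu)$, and on this set $D$ reduces to $\bregmandiv{}$ as defined in \eqref{def:D_deff}. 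Statement c is immediate from the value identity in \Cref{thm:general_meyer}.

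For the dichotomy: if $\nu \in B_\step(\mu)$, \Cref{lem:alternative_char_of_Beta} gives $\partial F^*(\nu) \cap K \neq \emptyset$, so \Cref{thm:general_meyer} yields $\rho_\step = \nu$. Otherwise $\rho_\step \neq \nu$, and any optimal $\varphi_\step$ lies in $\partial F^*(\rho_\step)$ --- which equals $\partial(\frac{1}{2}W_2^2(\cdot,\mu))(\rho_\step)$ by \Cref{lem:subdiff_of_H} --- satisfies $\norm{\varphi_\step}_K = 1$, and obeys \eqref{eq:extreme_pair}. Since the Minkowski functional of $\steplip$ is $\norm{\varphi}_K = \text{Lip}(\varphi)/\step$, this gives $\text{Lip}(\varphi_\step) = \step$; combined with the computation $1_K^*(\rho_\step - \nu) = \step W_1(\rho_\step, \nu)$ from \Cref{ex:how_multiscale_is_example}, \eqref{eq:extreme_pair} becomes \eqref{eq:extremality_relation}.

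The final claim --- uniqueness of $T_\step$ as optimal transport map for $W_2(\mu,\rho_\step)$ and the displacement bound \eqref{eq:displacement_bound_for_w2optimal} --- is where the optimal transport structure re-enters. By \Cref{lem:subdiff_of_H}, the condition $\rho_\step \in \partial F(\varphi_\step)$ is equivalent to $\varphi_\step^{c_2}$ being a Kantorovich potential for $W_2(\mu, \rho_\step)$; Brenier's theorem (Theorem 1.17 of \cite{santambrogio2015optimal}) then identifies the unique $W_2$ optimal transport map $\mu$-a.e.\ as $I - \nabla \varphi_\step^{c_2} = T_\step$. The main delicate point is the Lipschitz bound on $\varphi_\step^{c_2}$ required to deduce \eqref{eq:displacement_bound_for_w2optimal}: this bound is \emph{not} automatic for the $c_2$-transform of a $\step$-Lipschitz function, but it holds here because $\varphi_\step \in \steplip$ (as \eqref{prob:firstdual} is posed over $\steplip$) and \Cref{lem:equality_of_c_transforms} yields $\varphi_\step^{c_2} = \varphi_\step^{\hybrid}$ on $\Omega$. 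Since $\hybrid(x,y) = h(|x-y|)$ with $h$ a $\step$-Lipschitz function on $\mathbb{R}^+$, a direct calculation shows that any $\hybrid$-transform is $\step$-Lipschitz. Therefore $|\nabla \varphi_\step^{c_2}| \leq \step$ wherever the gradient exists, giving $\norm{I - T_\step}_{L^\infty(\mu)} \leq \step$.
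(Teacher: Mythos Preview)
Your proof is correct and follows essentially the same route as the paper's: both verify the hypotheses of \Cref{thm:general_meyer} via \Cref{ex:how_multiscale_is_example} and \Cref{lem:solution_to_first_dual_exists}, read off statements a--c using \Cref{lem:subdiff_of_H} and \Cref{lem:alternative_char_of_Beta}, compute the Minkowski functional $\norm{\varphi}_K = \text{Lip}(\varphi)/\step$ for the dichotomy, and obtain \eqref{eq:displacement_bound_for_w2optimal} from the Lipschitz bound on $\varphi_\step^{c_2}$ via \Cref{lem:equality_of_c_transforms}. Your version is slightly more explicit in spelling out why the $c_2$-transform inherits the $\step$-Lipschitz bound (through the identification with the $\hybrid$-transform) and in citing Brenier's theorem for uniqueness of $T_\step$, but the argument is otherwise identical.
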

\begin{remark}
This result, together with \Cref{lem:first_dual_and_second_are_equivalent}, provides a proof of statement 1 of \Cref{prop:relation_between_Tstep_andvarphistep}. Moreover, recalling \Cref{lem:uniquesolvability}, we observe that $I-\nabla \varphi^{c_2}_0$ is the solution map to \eqref{eq:different_learned_reg}. Thus, \Cref{prop:meyer-for-multiscale} also proves statement 2 of \Cref{prop:relation_between_Tstep_andvarphistep}.
\end{remark}
% \begin{remark}
% For $\varphi \in C(\Omega)$, by $\text{Lip}(\varphi)$ we refer to the minimal Lipschitz constant of $\varphi$, i.e.
% \begin{equation*}
%     \text{Lip}(\varphi)= \sup_{x, y \in \Omega, x \neq y} \frac{|\varphi(x) - \varphi(y)|}{|x-y|}.
% \end{equation*}
% \end{remark}
\begin{proof}
We have already described how $F(\varphi) = -\int_\Omega \varphi^{c_2} d\mu$ and $K = \steplip$ satisfy the hypotheses of \Cref{thm:general_meyer}; in particular, $\mu \ll \lebesgue$ and $\lebesgue(\partial \Omega)=0$ guarantee strict convexity of $F^*$. Further, \Cref{lem:solution_to_first_dual_exists} guarantees the existence of a solution to \eqref{prob:firstdual}. Statements a, b, and c then follow immediately from \Cref{thm:general_meyer} and \Cref{lem:subdiff_of_H}. 

Since we have shown that $\rho \in B_\step(\mu)$ is equivalent to $\partial F^*(\rho) \cap K \neq \emptyset$ in \Cref{lem:alternative_char_of_Beta}, we see that the condition of the dichotomies in this proposition and \Cref{thm:general_meyer} correspond. The only part of statements 1 and 2 in \Cref{prop:meyer-for-multiscale} that is not an immediate implication of \Cref{thm:general_meyer} is that $\text{Lip}(\varphi_\step) = \step$, but this comes from determining that $\norm{\varphi_\step}_K = \text{Lip}(\varphi_\step)/\step$. Further, $T_\step$ is optimal for $W_2(\mu,\rho_\step)$ since $T_\step = I-\nabla \varphi_\step^{c_2}$ almost everywhere and $\varphi_\step \in \partial\left( \frac{1}{2}W_2^2(\cdot, \mu)\right)(\rho_\step)$. Finally, \eqref{eq:displacement_bound_for_w2optimal} holds since $\text{Lip}(\varphi_\step^{c_2}) \leq \lambda$ by \Cref{lem:equality_of_c_transforms}.
\end{proof}

This result, together with \Cref{lem:first_dual_and_second_are_equivalent} furnishes an additional description of the value of \eqref{prob:initial-multiscale-statement} which is useful in proving the interpretation of $D_\step(\nu, \rho_\step)$ in \eqref{eq:D_interp}.
\begin{corollary}
\label{cor:wrof-is-hybrid}
Under the hypotheses of \Cref{prop:meyer-for-multiscale}, the minimal value of \eqref{prob:initial-multiscale-statement} is equal to 
\begin{equation}
\cI_{\hybrid}(\mu,\nu) = \inf \{ \int_{\Omega \times \Omega} \hybrid(x,y) d\gamma \mid \gamma \in \Pi(\mu,\nu)\}. \label{prob:underlyingOTproblem}
\end{equation}
\end{corollary}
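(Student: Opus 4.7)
The plan is to chain together three equalities: the value of \eqref{prob:initial-multiscale-statement} equals the value of the dual \eqref{prob:firstdual}, which equals the value of \eqref{prob:seconddual}, which by Kantorovich duality equals $\cI_{\hybrid}(\mu,\nu)$.

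First, I would invoke statement c of \Cref{prop:meyer-for-multiscale}, which says that the minimal value of \eqref{prob:initial-multiscale-statement} coincides with the value of \eqref{prob:firstdual}, namely
\begin{equation*}
\sup_{\varphi \in \steplip} \int_\Omega \varphi^{c_2} d\mu + \int_\Omega \varphi d\nu.
\end{equation*}
Next, I would apply \Cref{lem:first_dual_and_second_are_equivalent}, which asserts that this value equals the value of the unconstrained problem
\begin{equation*}
\sup_{\varphi \in C(\Omega)} \int_\Omega \varphi^{\hybrid} d\mu + \int_\Omega \varphi d\nu.
\end{equation*}

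Finally, since $\Omega$ is compact and the cost $\hybrid$ is continuous (indeed Lipschitz) and bounded on $\Omega \times \Omega$, Theorem 1.39 of \cite{santambrogio2015optimal} (already used in the proof of \Cref{lem:solution_to_first_dual_exists}) yields Kantorovich duality for the $\hybrid$-cost, giving that the preceding supremum equals $\cI_{\hybrid}(\mu,\nu)$. Stringing these three equalities together completes the proof. There is no real obstacle here; the work has essentially been done in \Cref{lem:first_dual_and_second_are_equivalent}, \Cref{lem:solution_to_first_dual_exists}, and statement c of \Cref{prop:meyer-for-multiscale}, so this corollary is just the assembly of those facts.
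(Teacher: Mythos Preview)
Your proposal is correct and matches the paper's own proof essentially verbatim: the paper also invokes \Cref{prop:meyer-for-multiscale}, \Cref{lem:first_dual_and_second_are_equivalent}, and Theorem 1.39 of \cite{santambrogio2015optimal} to chain the value of \eqref{prob:initial-multiscale-statement} through \eqref{prob:firstdual} and \eqref{prob:seconddual} to $\cI_{\hybrid}(\mu,\nu)$.
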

\begin{proof}
    Observe that \eqref{prob:seconddual} is a standard Kantorovich potential problem, and thus via \Cref{prop:meyer-for-multiscale}, \Cref{lem:first_dual_and_second_are_equivalent}, and Theorem 1.39 of \cite{santambrogio2015optimal}  we get that the value of \eqref{prob:initial-multiscale-statement} coincides with \eqref{prob:underlyingOTproblem}.

\end{proof}

We now turn to the proof of \Cref{thm:analogue_of_wavelet_shrinkage}. A crucial role is played by the absolute continuity of $\rho_\step$, and the proof of this property is the focus of the following section.
% Under the additional assumption that $\nu \ll \lebesgue$, the characterisation of $\rho_\step$ given in \Cref{prop:meyer-for-multiscale} can be augmented. This is provided in \Cref{prop:soft_thresholding_of_transport_map}, where we obtain $\rho_\step$ by applying a soft thresholding to a transport map from $\nu$ to $\mu$.

\section{Absolute continuity of $\rho_\step$}
\label{sec:absolute_continuity_of_rho0}
The following proposition provides conditions under which $\rho_\step$ is guaranteed to be absolutely continuous, and proves statement \ref{claim:rho_step_AC} from \Cref{thm:analogue_of_wavelet_shrinkage}.
\begin{proposition}
Suppose that $\Omega \subset \RR^d$ is compact and convex, with a non-empty interior. If $\mu$ and $\nu$ are absolutely continuous with respect to Lebesgue measure, then $\rho_\step$, the unique solution to \eqref{prob:initial-multiscale-statement}, is absolutely continuous as well.\label{thm:rho0AC}
\end{proposition}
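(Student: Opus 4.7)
The plan is to work from the representation $\rho_\step = (T_\step)_\# \mu$ furnished by \Cref{prop:meyer-for-multiscale}, where $T_\step = I - \nabla \varphi_\step^{c_2}$ $\mu$-almost everywhere. By \Cref{cor:wrof-is-hybrid} and \Cref{lem:equality_of_c_transforms}, $\varphi_\step$ is a $\hybrid$-concave Kantorovich potential for $\cI_{\hybrid}(\mu,\nu)$ and $\nabla \varphi_\step^{c_2} = \nabla \varphi_\step^{\hybrid}$ wherever defined. Let $\gamma_0$ be an optimal $\hybrid$-plan. Kantorovich duality combined with the explicit form of $\nabla_x \hybrid$ gives, in accordance with statement 3 of \Cref{prop:relation_between_Tstep_andvarphistep}, that $\rho_\step = f_\# \gamma_0$, where
$$f(x,y) = \begin{cases} y & |x-y|\leq \step, \\ x + \step(y-x)/|y-x| & |x-y|>\step. \end{cases}$$
Splitting $\gamma_0 = \gamma_0^A + \gamma_0^B$ based on whether $|x-y|\leq\step$ or not, I obtain $\rho_\step = \nu_A + f_\#\gamma_0^B$, where $\nu_A$ is the second marginal of $\gamma_0^A$.

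Since $\nu_A \leq \nu \ll \lebesgue$, the first summand is absolutely continuous. The main obstacle is to show $f_\# \gamma_0^B \ll \lebesgue$. On $\spt \gamma_0^B$ the Huber cost reduces to $\step|x-y|-\step^2/2$, and by the envelope theorem one verifies $|\nabla \varphi_\step^{\hybrid}| = \step$ on the first marginal of $\gamma_0^B$; thus $\gamma_0^B$ has the ray-monotone structure familiar from $W_1$-optimal transport, being concentrated on a pairwise disjoint family of transport segments oriented along the unit vector field $v(x) = -\nabla \varphi_\step^{\hybrid}(x)/\step$. The map $f$, restricted to $\spt \gamma_0^B$, translates each source point $x$ by exactly distance $\step$ along its ray.

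My plan to handle $f_\# \gamma_0^B$ is to disintegrate the first marginal of $\gamma_0^B$ along these transport rays via the classical $W_1$ ray decomposition: because $\mu \ll \lebesgue$, the conditional measures on almost every ray are absolutely continuous with respect to one-dimensional Lebesgue, and since distinct rays are disjoint, a fixed-distance translation along them produces a new pairwise disjoint family of one-dimensional absolutely continuous measures whose reassembly is absolutely continuous on $\Omega$. A coarea-style change of variables makes this rigorous. A cleaner alternative I would also try is to exploit the representation $T_\step = \nabla U$ for the convex function $U(x) = \tfrac{1}{2}|x|^2 - \varphi_\step^{c_2}(x)$, apply Alexandrov differentiability of $U$, and use the identity $|T_\step - I| = \step$ on $B$ to rule out positive $\mu$-measure sets where $D^2 U$ degenerates in every direction. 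Either route rests on a non-degeneracy statement for the transverse variation of $v$ on $B$, which is where I expect the technical heart of the argument to lie.
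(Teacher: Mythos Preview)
Your decomposition $\rho_\step = \nu_A + f_\# \gamma_0^B$ with $\nu_A \leq \nu$ matches the paper's \Cref{lem:rho_0_bounded_above} exactly. The gap is in $f_\# \gamma_0^B$. Your first route assumes that translating each transport ray of $\varphi_\step^{c_2}/\step$ by distance $\step$ in its decreasing direction yields a pairwise disjoint family of segments; this is unjustified, since the translate of a point within $\step$ of the lower endpoint exits its ray, and nothing in your argument prevents images coming from distinct rays from colliding, so the coarea reassembly has no footing. Your second route via $|\nabla U - I| = \step$ and Alexandrov's theorem gives no lower bound on $\det D^2 U$, and ruling out degeneration ``in every direction'' simultaneously is strictly weaker than absolute continuity of the pushforward.

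The paper resolves this by introducing a \emph{second} ray family, that of $-\varphi_\step/\step$. It proves (\Cref{lem:sourceandtargetisintransportray} and \Cref{lem:ifatupperendoftargetupperendofsource}) that whenever $(x,y)\in\spt\gamma_0$ with $|x-y|>\step$ and $x$ is not a ray endpoint of $\varphi_\step^{c_2}/\step$, the image $T_\step(x)$ lands in the \emph{interior} of a ray of $-\varphi_\step/\step$, and there one recovers $x = T_\step(x) - \nabla\varphi_\step(T_\step(x))$. Since $I - \nabla\varphi_\step$ is Lipschitz on these ray interiors by the Caffarelli--Feldman--McCann estimate, for any null $E$ the relevant part of $T_\step^{-1}(E)$ is contained in a countable union of Lipschitz images of $E$, hence null (\Cref{prop:rhond_AC}). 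This Lipschitz left inverse on the \emph{target} ray family is the paper's replacement for the ``transverse non-degeneracy'' you correctly flag as the crux: rather than pushing forward along the source rays and reassembling, the argument pulls back along the target rays via a Lipschitz map.
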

The rest of this section is devoted to proving \Cref{thm:rho0AC}, and the plan is as follows. First, in \Cref{subsec:alternative_form_of_first_dual} we use the alternate expression for the dual problem \eqref{prob:firstdual} furnished by \eqref{prob:seconddual} to obtain a better understanding of how $\rho_\step$ relates to $\mu$ and $\nu$. Namely, there is an optimal transport plan $\gamma_0$ for \eqref{prob:underlyingOTproblem}, and $\rho_\step$ is obtained by completing all transport in this plan that moves less than distance $\step$, as well as progressing all transport that moves more than distance $\step$ as much as possible while retaining $\rho_\step \in B_\step(\mu)$. We use this understanding to decompose $\rho_\step$ into a sum of two measures, and by proving that each of these is absolutely continuous, we will obtain that $\rho_\step$ is absolutely continuous as well. 
\subsection{Consequences of \Cref{lem:first_dual_and_second_are_equivalent} for a minimizer of \eqref{prob:initial-multiscale-statement}}
\label{subsec:alternative_form_of_first_dual}
% Recall \Cref{prop:meyer-for-multiscale}, which says that if $\mu \ll \lebesgue$ the unique solution to \eqref{prob:initial-multiscale-statement} can be expressed in terms of a solution to \eqref{prob:firstdual}. In particular, if $\varphi_0$ solves \eqref{prob:firstdual} there is a Borel map $T_0$ almost everywhere equal to $x- \nabla \varphi^{c_2}_0(x)$  such that the unique solution $\rho_\step$ to \eqref{prob:initial-multiscale-statement} is given by
% % \begin{equation}
% % \sup_{\varphi \in \steplip} \int_\Omega \varphi^{c_2} d\mu + \int_\Omega \varphi d\nu,\label{prob:firstdual_isthisneeded?}
% % \end{equation}
% % where
% % \begin{equation*}
% % \varphi^{c_2}(x) = \inf_{y\in\Omega} \frac{1}{2}|x-y|^2 - \varphi(y).
% % \end{equation*}
% \begin{equation*}
% \rho_\step = (T_0)_\# \mu.
% \end{equation*}
% Further, $T_0$ is the unique ($\mu$ almost everywhere) optimal transport map from $\mu$ to $\rho_\step$ for the cost $c_2$. 
Recall \Cref{cor:wrof-is-hybrid}, which says that the value of \eqref{prob:initial-multiscale-statement} coincides with that of \eqref{prob:underlyingOTproblem}. By Theorem 1.4 of \cite{santambrogio2015optimal}, an optimal plan for the latter exists since $\Omega$ is compact; throughout this section we will refer to this plan by the notation $\gamma_0$. Let us also  fix $\varphi_\step$ as a solution of \eqref{prob:firstdual} which is $\hybrid$-concave; such a $\varphi_\step$ exists by \Cref{lem:first_dual_and_second_are_equivalent}. The following simple result characterises $\nabla \varphi_\step^{c_2}$, and thus the solution of \eqref{prob:initial-multiscale-statement} (see \Cref{prop:meyer-for-multiscale}), in terms of $\gamma_0$.

\begin{lemma}
\label{lem:gradientofvarphi}
Let $\Omega$ be compact and convex with non-negligible interior. Let $\gamma_0$ be optimal in \eqref{prob:underlyingOTproblem}. If $(x,y) \in \spt(\gamma_0)$, with $x$ in the interior of $\Omega$ and a differentiable point of $\varphi^{c_2}_\step$, then
\begin{equation*}
\nabla \varphi_\step^{c_2}(x) = \begin{cases}
x - y &\quad |x-y| \leq \step,\\
\step \frac{x-y}{|x-y|} &\quad |x-y| \geq \step.
\end{cases}
\end{equation*}
Thus, there is at most one $y \in B_\step(x)$ such that $(x,y) \in\spt(\gamma_0)$ and in that case $x - \nabla \varphi_0^{c_2}(x) = y$. 
%On the other hand, all $y$ such that $(x,y) \in \spt(\gamma_0)$ and $|x-y|\geq \step$ are co-linear. Further, it never occurs for such an $x$ that there exists $y_1, y_2$ such that $|x-y_1| < \step$ and $|x-y_2| \geq \step$ and both $(x,y_1)$ and $(x,y_2)$ are in $\spt(\gamma_0)$. 
\end{lemma}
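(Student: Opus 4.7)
The plan is to use the primal–dual optimality relation between $\gamma_0$ and $\varphi_\step$. Since $\varphi_\step$ is $\hybrid$-concave and $\hybrid(x,y)$ is an $\step$-Lipschitz function of $|x-y|$, we have $\varphi_\step \in \steplip$, so \Cref{lem:equality_of_c_transforms} gives $\varphi_\step^{c_2} = \varphi_\step^{\hybrid}$ on $\Omega$. Duality, together with optimality of $\gamma_0$ for \eqref{prob:underlyingOTproblem} and of $\varphi_\step$ for \eqref{prob:seconddual}, then yields, on $\spt(\gamma_0)$,
\begin{equation*}
\varphi_\step^{c_2}(x) + \varphi_\step(y) = \hybrid(x,y).
\end{equation*}
From here I would split on the two regimes in the definition of $\hybrid$ and apply, in each, the first-order condition at $x$ for a suitably chosen $c_2$-support function.

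In the regime $|x-y| \le \step$, the Huber cost coincides with $c_2$ and the displayed identity reads $\varphi_\step^{c_2}(x) = c_2(x,y) - \varphi_\step(y)$. The map $G(x') := c_2(x',y) - \varphi_\step(y) - \varphi_\step^{c_2}(x')$ is nonnegative on $\Omega$ by the very definition of the $c_2$-transform, and vanishes at $x$; since $x$ is interior to $\Omega$ and $\varphi_\step^{c_2}$ is differentiable there, the first-order condition $\nabla G(x) = 0$ delivers $\nabla \varphi_\step^{c_2}(x) = \nabla_x c_2(x,y) = x - y$.

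In the regime $|x-y| > \step$ the identity $G(x)=0$ is lost, so I would introduce the projection $z := x + \step(y-x)/|y-x|$ of $y$ onto $\partial B_\step(x)$, which lies in $\Omega$ by convexity and satisfies $|x-z| = \step$ and $|y-z| = |x-y| - \step$. A short calculation using $\hybrid(x,y) = \step|x-y| - \step^2/2$ and $c_2(x,z) = \step^2/2$ then gives
\begin{equation*}
c_2(x,z) - \varphi_\step(z) - \varphi_\step^{c_2}(x) = \varphi_\step(y) - \varphi_\step(z) - \step|y-z|;
\end{equation*}
the left-hand side is $\ge 0$ by the $c_2$-transform, while the right-hand side is $\le 0$ since $\varphi_\step \in \steplip$, so both vanish. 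Thus $x$ minimises $x' \mapsto c_2(x',z) - \varphi_\step(z) - \varphi_\step^{c_2}(x')$ on $\Omega$, and the interior first-order condition yields $\nabla \varphi_\step^{c_2}(x) = x - z = \step(x-y)/|x-y|$. The two case formulas agree when $|x-y| = \step$, and the uniqueness assertion is then immediate: any $y \in B_\step(x)$ with $(x,y) \in \spt(\gamma_0)$ is forced to equal $x - \nabla \varphi_\step^{c_2}(x)$. I expect the main delicate point to be the identification of the auxiliary point $z$ in the long-distance case; once it is in hand the rest is a routine application of a first-order condition at an interior minimiser.
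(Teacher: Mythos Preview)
Your argument is correct. You and the paper share the same starting point, namely the equality $\varphi_\step^{c_2}(x)+\varphi_\step(y)=\hybrid(x,y)$ on $\spt(\gamma_0)$ coming from duality and \Cref{lem:equality_of_c_transforms}, but you then take a different route. The paper observes directly that $x$ minimises $z\mapsto \hybrid(z,y)-\varphi_\step^{c_2}(z)$ and applies the first-order condition once, using that $\hybrid$ is $C^1$ in its first argument; the two regimes then appear simply as the two branches of $\nabla_x\hybrid(x,y)$. You instead stay entirely within the quadratic cost: in the near regime this is immediate since $\hybrid=c_2$, while in the far regime you manufacture the auxiliary point $z=x+\step(y-x)/|y-x|$ and show, via the $\steplip$ bound on $\varphi_\step$, that the $c_2$-support inequality is saturated at $z$, reducing again to a $c_2$ first-order condition. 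Your approach is a little longer and the identification of $z$ is the extra idea it requires, but it has the minor conceptual benefit of never needing to differentiate $\hybrid$. Both proofs yield the uniqueness statement in the same way.
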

\begin{proof}
Since $\varphi_\step$ solves \eqref{prob:firstdual},  \Cref{lem:first_dual_and_second_are_equivalent} implies that $\varphi_\step$ also solves \eqref{prob:seconddual}, and \Cref{lem:equality_of_c_transforms} gives that $\varphi^{c_2} = \varphi^{\hybrid}$. Since $\varphi_\step$ is optimal potential in \eqref{prob:seconddual} we have
\begin{equation*}
\varphi^{c_2}_\step(x) + \varphi_\step(y) \leq \hybrid(x,y)
\end{equation*}
with equality on the support of $\gamma_0$. Thus, if $(x,y) \in \spt(\gamma_0)$, the minimum of
\begin{equation*}
\inf_z \hybrid(z,y) - \varphi^{c_2}_\step(z)
\end{equation*}
is obtained at $x$. If $x$ is interior to $\Omega$ and a differentiable point of $\varphi^{c_2}_\step$, then
\begin{equation*}
0 = \nabla_x \hybrid(x,y) - \nabla \varphi_\step^{c_2}(x).
\end{equation*}
Computing the derivative of $\hybrid$, we obtain the claim.
\end{proof}
We note that since $T_\step(x) = x - \nabla \varphi_\step^{c_2}(x)$ almost everywhere, \Cref{lem:gradientofvarphi} proves statement 3 of \Cref{prop:relation_between_Tstep_andvarphistep}.
\subsection{A decomposition of $\rho_\step$}
\label{sec:adecompof_rho}
Define the Borel measures
\begin{equation}
\muc = (\pi_x)_\# \gamma_0 |_{\{|x-y| \leq \step\}}, \quad \mud = (\pi_x)_\# \gamma_0 |_{\{|x-y| > \step\}},\label{def:muc_and_mud}
\end{equation}
where $\pi_x$ and $\pi_y$ are the canonical projections. Let $T_\step$ be a Borel map which is almost everywhere equal to $I-\nabla \varphi_\step^{c_2}$. Recalling that $T_\step$ is optimal for the transport between $\mu$ and $\rho_\step$ for the cost $c_2$ (see \Cref{prop:meyer-for-multiscale}), define
\begin{equation}
\rhonc = (T_\step)_\# \muc, \quad \rhond = (T_\step)_\# \mud.\label{def:rhonc_and_rhond}
\end{equation}
It is clear that $\mu = \muc + \mud$, and from this we obtain $\rho_\step = \rhonc + \rhond$. We will prove that $\rho_\step \ll \mathcal{L}_d$ by showing the same for $\rhonc$ and $\rhond$. 

It is easier to prove that $\rhonc \ll \mathcal{L}_d$, and that is the content of the following lemma. We will actually prove the stronger result that $\rhonc(E) \leq \nu(E)$ for all Borel $E$. This inequality should be expected given the discussion following \Cref{lem:gradientofvarphi}, which says that the map $I-\nabla \varphi_\step^{c_2}$ completes all transport in $\gamma_0$ that moves less than distance $\step$. Since the mass that moves less than distance $\step$ under $\gamma_0$ is precisely $\muc$, and $\gamma_0$ transports $\mu$ to $\nu$, that $\rhonc \leq \nu$ is not surprising. 

\begin{lemma}
\label{lem:rho_0_bounded_above}
If $\mu\ll \mathcal{L}_d$, then for all $E \subset \Omega$ Borel we have
\begin{equation}
    \rhonc(E) \leq \nu(E). \label{eq:nu_dominates_rhonc}
\end{equation}
As such, if $\nu \ll \mathcal{L}_d$, we have $\rhonc \ll \mathcal{L}_d$ as well. 
\end{lemma}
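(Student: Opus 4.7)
The plan is to exhibit $\rho_\lambda^a$ as the pushforward of the restricted plan $\gamma_0|_{\{|x-y| \leq \step\}}$ under $\pi_y$, which is manifestly dominated by $\nu$. The bridge between this restriction and the definition $\rho_\lambda^a = (T_\step)_\# \mu^a$ is \Cref{lem:gradientofvarphi}, which says that on the region $\{|x-y|\leq \step\}$ the map $T_\step = I - \nabla \varphi_\step^{c_2}$ completes the transport prescribed by $\gamma_0$, i.e.~$T_\step(x) = y$ for $(x,y) \in \spt(\gamma_0) \cap \{|x-y|\leq \step\}$ whenever $x$ is interior to $\Omega$ and $\varphi_\step^{c_2}$ is differentiable at $x$.

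First I would collect the null set on which this identification can fail. Since $\Omega$ is convex, $\mathcal{L}_d(\partial \Omega) = 0$, and since $\varphi_\step^{c_2}$ is Lipschitz, Rademacher's theorem guarantees that the set $N$ of points of $\Omega$ which are either on $\partial\Omega$ or non-differentiable points of $\varphi_\step^{c_2}$ satisfies $\mathcal{L}_d(N) = 0$. Because $\mu \ll \mathcal{L}_d$ and $\mu^a \leq \mu$, we get $\mu^a(N) = 0$, and consequently $\gamma_0|_{\{|x-y|\leq \step\}}(N \times \Omega) = 0$ since $\mu^a$ is the first marginal of $\gamma_0|_{\{|x-y|\leq \step\}}$.

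Next, I would apply \Cref{lem:gradientofvarphi} pointwise: for $\gamma_0|_{\{|x-y|\leq \step\}}$-a.e.\ $(x,y)$, the point $x$ lies in the interior of $\Omega$, $\varphi_\step^{c_2}$ is differentiable at $x$, and $(x,y) \in \spt(\gamma_0)$ with $|x-y| \leq \step$; the lemma then gives $T_\step(x) = x - \nabla \varphi_\step^{c_2}(x) = y$. Hence $T_\step \circ \pi_x = \pi_y$ holds $\gamma_0|_{\{|x-y|\leq \step\}}$-almost everywhere, so for any Borel $E \subset \Omega$,
\begin{align*}
\rhonc(E) &= ((T_\step)_\# \muc)(E) = ((T_\step \circ \pi_x)_\# \gamma_0|_{\{|x-y|\leq \step\}})(E) \\
&= ((\pi_y)_\# \gamma_0|_{\{|x-y|\leq \step\}})(E) \leq ((\pi_y)_\# \gamma_0)(E) = \nu(E),
\end{align*}
which is \eqref{eq:nu_dominates_rhonc}. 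The absolute continuity statement follows immediately: if $\mathcal{L}_d(E) = 0$ and $\nu \ll \mathcal{L}_d$, then $\nu(E) = 0$, forcing $\rhonc(E) = 0$.

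The only subtle step is the pushforward identification in the third paragraph; once one has the a.e.\ equality $T_\step \circ \pi_x = \pi_y$ under $\gamma_0|_{\{|x-y|\leq \step\}}$, the chain of pushforwards is mechanical. All the real work has already been done in \Cref{lem:gradientofvarphi}, so I expect no genuine obstacle beyond carefully handling the negligible set of non-differentiability and boundary points.
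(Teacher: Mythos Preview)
Your proposal is correct and follows essentially the same approach as the paper: both use \Cref{lem:gradientofvarphi} to identify $T_\step(x)=y$ for $\gamma_0|_{\{|x-y|\le\step\}}$-a.e.\ $(x,y)$, and then conclude $\rhonc=(\pi_y)_\#\gamma_0|_{\{|x-y|\le\step\}}\le\nu$. The paper takes a slightly longer route by first proving the full plan identity $\gamma_0|_{\{|x-y|\le\step\}}=(I,T_\step)_\#\muc$ via agreement on measurable rectangles, whereas your direct pushforward argument $T_\step\circ\pi_x=\pi_y$ a.e.\ avoids that step; just remember to enlarge your null set $N$ to also include the Lebesgue-null set where the Borel representative $T_\step$ differs from $I-\nabla\varphi_\step^{c_2}$.
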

\begin{proof}
Observe that if 
\begin{equation}
\gamma_0 |_{\{|x-y| \leq \step \}} = (I, T_\step)_\# \muc, \label{eq:smallscalegamma0}
\end{equation}
then we are done, since then for $E \subset \Omega$ Borel,
\begin{align*}
\rhonc(E) &= (\pi_y)_\#(I, T_\step)_\# \muc(E),\\
&= (\pi_y)_\# \gamma_0|_{\{|x-y| \leq \step \}}(E),\\
&= \gamma_0( \Omega \times E \cap \{ |x-y| \leq \step\}),\\
&\leq \gamma_0 (\Omega \times E),\\
&= \nu(E).
\end{align*}
So, we focus on proving \eqref{eq:smallscalegamma0}. Note first that if $\gamma_0|_{\{|x-y\} \leq \step}$ is the zero measure, then \eqref{eq:smallscalegamma0} automatically holds. We therefore proceed assuming that
\begin{equation*}
\gamma_0|_{\{|x-y| \leq \step\}}(\Omega \times \Omega)>0.
\end{equation*}
Recall the potential $\varphi_\step$, optimal in \eqref{prob:firstdual}. Since $\varphi^{c_2}_\step$ is Lipschitz, it is differentiable almost everywhere. Thus,  $\lebesgue(\partial \Omega)=0$ implies that there exists a Borel measurable set $G \subset \Omega \setminus \partial \Omega$ such that $\varphi_\step^{c_2}$ is differentiable on $G$, $T_\step(x) = x - \nabla \varphi^{c_2}_\step(x)$ on $G$, and $\mathcal{L}_d(G^c) = 0$. We therefore have, for $E_1, E_2 \subset \Omega$ Borel,
\begin{align*}
\gamma_0|_{\{|x-y| \leq \step\}}(E_1 \times E_2) &= \gamma_0( E_1 \times E_2 \cap \{|x-y| \leq \step\}),\\
&= \gamma_0( E_1 \times E_2 \cap \{|x-y| \leq \step\} \cap \spt(\gamma_0) \cap G \times \Omega).
\end{align*}
Here, the second equality holds since $\mu \ll \lebesgue$. Next, we claim that
\begin{equation*}
\{|x-y| \leq \step\} \cap \spt(\gamma_0) \cap G \times \Omega \subset \Gamma_{T_\step}(G),
\end{equation*} 
the latter being the graph of the map $T_\step$ over $G$. Indeed, if $(x,y)$ is in the set on the left hand side, then according to \Cref{lem:gradientofvarphi} we get $y = x- \nabla \varphi^{c_2}_0(x) = T_\step(x)$, which proves the claim. We observe, however, that
\begin{equation*}
(E_1 \times E_2) \cap \Gamma_{T_\step}(G) = (E_1 \cap T_\step^{-1}(E_2) \times \Omega) \cap \Gamma_{T_\step}(G).
\end{equation*}
As such,
\begin{align*}
\gamma_0|_{\{|x-y| \leq \step\}}(E_1 \times E_2) &= \gamma_0( E_1 \times E_2 \cap \{|x-y| \leq \step\} \cap \spt(\gamma_0) \cap G \times \Omega),\\
&= \gamma_0( E_1 \cap T_\step^{-1}(E_2) \times \Omega \cap \{|x-y| \leq \step\}),\\
&= \gamma_0|_{\{|x-y| \leq \step\}}(E_1 \cap T_\step^{-1}(E_2) \times \Omega),\\
&= \muc(E_1 \cap T_\step^{-1}(E_2)),\\
&= (I, T_\step)_\# \muc(E_1 \times E_2).
\end{align*}
Thus, $\gamma_0|_{\{|x-y| \leq \step\}}$ and $(I, T_\step)_\# \muc$ agree on all measurable rectangles $E_1\times E_2$. Since $\muc(\Omega) = \gamma_0|_{\{|x-y| \leq \step\}}(\Omega\times \Omega)$, we can multiply  $\gamma_0|_{\{|x-y| \leq \step\}}$ and $(I, T_\step)_\# \muc$ by the same constant to obtain probability measures. These probability measures agree on all measurable rectangles, and hence by Theorem 3.3 of \cite{billingsley2008probability} they are equal. This implies \eqref{eq:smallscalegamma0}, completing the proof. 
\end{proof}
\begin{remark}
We note that \Cref{lem:rho_0_bounded_above} implies that absolute continuity of $\mu$ is not enough to obtain $\rho_\step\ll \lebesgue$. Indeed, if $\nu$ and $\lebesgue$ are singular and $\muc$ is non-zero, then $\rhonc$ is non-zero and \Cref{lem:rho_0_bounded_above} implies that $\rhonc$ and $\lebesgue$ are singular. Thus, for singular $\nu$, $\rho_\step$ may have a non-zero singular component with respect to Lebesgue measure. 
\end{remark}
\subsection{Proof that $\rhond \ll \mathcal{L}_d$}
The general idea of the argument is to take $E\subset \Omega$ Borel with measure $0$ and write
\begin{align*}
\rhond(E) &= (\pi_x)_\# \gamma_0|_{|x-y| > \step} (T_\step^{-1}(E)),\\
&= \gamma_0( T_\step^{-1}(E) \times \Omega \cap \{ |x-y| > \step \} \cap \spt(\gamma_0)).
\end{align*}
We will show that the set in the preceding line is contained in a set of the form $A \times \Omega$ for $A$ Borel with measure $0$, which will guarantee that $\rhond(E) = 0$ since $\mu \ll \lebesgue$. 

We will start with some simple observations about the set of $(x,y)$ inside the support of $\gamma_0$ with $|x-y|> \step$. We will use the notion of the transport rays of a $1$-Lipschitz function (see, for example, Definition 3.7 of \cite{santambrogio2015optimal}).
\begin{lemma}
If $(x,y) \in  \spt(\gamma_0)$ with $|x-y|>\step$, then $x$ and $y$ are in transport rays of $\varphi_\step^{c_2}/\step$ and $-\varphi_\step/\step$, respectively. If $\varphi^{c_2}_\step$ is differentiable at $x$ and $x$ is an interior point of $\Omega$, then the increasing directions of both rays are parallel to $\nabla \varphi^{c_2}_\step(x)$. Further, $x-\nabla \varphi_\step^{c_2}(x)$ is in the same ray as $y$, and this is the unique transport ray of $-\varphi_\step/\step$ containing $x- \nabla \varphi^{c_2}_\lambda(x)$. \label{lem:sourceandtargetisintransportray}
\end{lemma}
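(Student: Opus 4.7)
My plan is to exploit the Kantorovich duality equality on $\spt(\gamma_0)$, namely $\varphi^{c_2}_\step(x) + \varphi_\step(y) = \hybrid(x,y) = \step|x-y| - \step^2/2$ in the regime $|x-y|>\step$, together with the $\step$-Lipschitz bounds on $\varphi^{c_2}_\step$ and $\varphi_\step$, to exhibit explicit segments on which these bounds are saturated. Such saturated segments are transport rays by definition.

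Concretely, for $\varphi^{c_2}_\step/\step$ I would parametrize $w_t := x + t(y-x)/|y-x|$ for $t \in [0,|x-y|-\step]$ and apply the general inequality $\varphi^{c_2}_\step(w_t) + \varphi_\step(y) \leq \hybrid(w_t,y)$. Since $|w_t-y| = |x-y|-t \geq \step$ on this interval, $\hybrid(w_t,y) = \step|w_t-y|-\step^2/2$, and subtracting the equality at $t=0$ yields $\varphi^{c_2}_\step(w_t) - \varphi^{c_2}_\step(x) \leq -\step t$. Combined with the $\step$-Lipschitz reverse bound, equality is forced, exhibiting $x$ in a transport ray of $\varphi^{c_2}_\step/\step$ with the other endpoint at $w_{|x-y|-\step}$. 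An analogous computation with $y'_s := y + s(x-y)/|x-y|$ and the inequality $\varphi^{c_2}_\step(x) + \varphi_\step(y'_s) \leq \hybrid(x,y'_s)$ produces the transport ray of $-\varphi_\step/\step$ containing $y$. Under the differentiability hypothesis, \Cref{lem:gradientofvarphi} gives $\nabla\varphi^{c_2}_\step(x) = \step(x-y)/|x-y|$, which is parallel to the increasing directions of both rays as read off from the two constructions.

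Since $z := x - \nabla\varphi^{c_2}_\step(x) = x + \step(y-x)/|x-y|$ coincides with the point $y'_{|x-y|-\step}$, the second construction places $z$ in the same transport ray as $y$. For the uniqueness statement, which I expect to be the main technical obstacle, the plan is first to prove that $\varphi_\step$ is differentiable at $z$ with $\nabla\varphi_\step(z) = -\nabla\varphi^{c_2}_\step(x)$, and then to invoke the standard fact that a transport ray of a $1$-Lipschitz function through a point of unit gradient is uniquely determined by maximality along the gradient direction. For the differentiability at $z$, the upper bound $\varphi_\step(z+h) - \varphi_\step(z) \leq -\nabla\varphi^{c_2}_\step(x)\cdot h + O(|h|^2)$ follows directly from the dual inequality with the fixed point $x$ and a second-order expansion of $\hybrid(x,\cdot)$ at $z$, which is valid because $|z-x|=\step$ sits at the smooth transition between the two regimes of $\hybrid$. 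The matching lower bound is the delicate ingredient; I would obtain it by combining the $\hybrid$-concavity representation $\varphi_\step = (\varphi^{c_2}_\step)^{\hybrid}$ with the uniqueness of the argmin $z$ in $\inf_w c_2(x,w) - \varphi_\step(w)$, a uniqueness guaranteed precisely by the differentiability of $\varphi^{c_2}_\step$ at $x$.
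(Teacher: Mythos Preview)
Your treatment of the first three claims (that $x$ and $y$ lie on transport rays, the identification of the ray directions with $\nabla\varphi^{c_2}_\step(x)$, and that $z:=x-\nabla\varphi^{c_2}_\step(x)$ sits on the ray through $y$) is correct and essentially the same as the paper's. The paper phrases the argument via the infimum characterization $\varphi^{c_2}_\step(x)=\inf_w\hybrid(x,w)-\varphi_\step(w)$ and observes that the minimum is also attained at every point of $[z,y]$, forcing maximal slope of $-\varphi_\step$ there; your version subtracts the dual inequality at $w_t$ from the equality at $t=0$. These are the same computation.

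The gap is in your proposed route to uniqueness. You want differentiability of $\varphi_\step$ at $z$ via the representation $\varphi_\step=(\varphi^{c_2}_\step)^{\hybrid}$, and you claim the needed lower bound follows from the uniqueness of the argmin $z$ in $\inf_w c_2(x,w)-\varphi_\step(w)$. But that is the wrong minimization problem. The envelope argument for differentiability of $(\varphi^{c_2}_\step)^{\hybrid}$ at $z$ requires control of $\argmin_w\bigl(\hybrid(w,z)-\varphi^{c_2}_\step(w)\bigr)$, which is an infimum in the \emph{first} slot with $z$ fixed; differentiability of $\varphi^{c_2}_\step$ at $x$ says nothing about this. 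In fact that argmin is typically \emph{not} unique: any $x'$ lying on the transport ray of $\varphi^{c_2}_\step/\step$ through $x$, on the increasing side, at distance $\geq\step$ from $z$, is also a minimizer. One can still rescue differentiability by checking that every such $x'$ yields the same candidate gradient $\nabla_y\hybrid(x',z)=\step\,(y-x)/|y-x|$, but establishing this is exactly the computation you have not done, and it already uses the maximal-slope behaviour of $\varphi_\step$ along $[z,y]$.

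The paper sidesteps differentiability of $\varphi_\step$ at $z$ altogether. It argues by contradiction: if $z$ were on a second transport ray of $-\varphi_\step/\step$, then by the standard fact that distinct transport rays meet only at common endpoints, $z$ would be the upper endpoint of the second ray. Along its decreasing direction $v$ one has the one-sided derivative $\frac{d}{dt}\big|_{t=0^+}\varphi_\step(z+tv)=\step$, and hence
\[
\frac{d}{dt}\Big|_{t=0^+}\bigl(\hybrid(x,z+tv)-\varphi_\step(z+tv)\bigr)=\step\Bigl\langle v,\tfrac{y-x}{|y-x|}\Bigr\rangle-\step<0
\]
since $v\neq(y-x)/|y-x|$. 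This contradicts the fact, already established, that $z$ minimizes $w\mapsto\hybrid(x,w)-\varphi_\step(w)$. This is a cleaner argument than proving differentiability, and it uses precisely the minimizer property you would need anyway to characterize the argmins in the envelope approach.
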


\begin{proof}
Since $(x,y) \in \spt(\gamma_0)$, Kantorovich duality gives us that
\begin{equation*}
\varphi^{c_2}_\lambda(x) + \varphi_\step(y) = \hybrid(x,y).
\end{equation*}
By the equality $\varphi^{c_2}_\step = \varphi^{\hybrid}_\step$,
\begin{equation*}
\varphi^{c_2}_\lambda(x) = \inf_{z \in \Omega} \hybrid(x,z) - \varphi_\step(z),
\end{equation*}
and so we know the infimum is obtained at $y$. Note that since $|x-y| > \step$, by traversing the segment $[x,y]$ starting at $y$ we obtain a rate of decrease of $\step$ per unit distance for $\hybrid(x,\cdot)$. Since $-\varphi_\step$ is $\step$-Lipschitz, we must therefore have that the infimum is also obtained at every point $z \in [x,y]$ with $|x-z| \geq \step$. This is only possible if $-\varphi_\step$ increases at maximal rate along this non-trivial segment, and thus $[x + \step\frac{y-x}{|y-x|}, y]$, and therefore $y$, is contained in a transport ray of $-\varphi_\step/\step$. This transport ray has increasing direction parallel to $x-y$, which will be needed later.

Since $\varphi_\step^{c_2} \in \steplip$ as well, we can prove that $x$ is in a transport ray of $\varphi^{c_2}_\step/\step$ with a nearly identical argument, starting from the equality \begin{equation*}
\varphi_\step(y) = \inf_{z \in \Omega} \hybrid(y,z) - \varphi^{c_2}_\step(z).
\end{equation*}
which holds since $\varphi_\step$ is $\hybrid$-concave.

If $\varphi^{c_2}$ is differentiable at $x$, then it is clear that $\nabla \varphi^{c_2}(x)$ is parallel to the increasing direction of the transport ray of $\varphi^{c_2}/\step$ that $x$ is in. On the other hand, the increasing direction of the transport ray of $-\varphi_\step/\step$ containing $y$ is parallel to $x-y$, which is parallel to $\nabla \varphi_\step^{c_2}$ by \Cref{lem:gradientofvarphi}. By the same lemma we have
\begin{equation*}
    x + \step\frac{y-x}{|y-x|} = x - \nabla \varphi^{c_2}_\step(x),
\end{equation*}
which verifies that $x - \nabla \varphi^{c_2}_\step(x)$ is in the same transport ray of $-\varphi_\step/\step$ as $y$.

% , which is parallel to $\nabla \varphi^{c_2}_\lambda(x)$ via \Cref{lem:gradientofvarphi}

To see that the transport ray containing $x-\nabla \varphi^{c_2}_\lambda(x)$ is unique, suppose $x-\nabla \varphi^{c_2}_\lambda(x)$ is contained in two transport rays of $-\varphi_\step/\step$. As we have shown, one of these has decreasing direction parallel to $-\nabla \varphi^{c_2}_\lambda(x)$, and since $|x-y| > \step$, non-zero length in this direction. Noting that two transport rays can only collide at a point which is the upper (or lower) endpoint of both rays (see, for example,  Lemma 10 of \cite{caffarelli2002constructing}), we get that if $x-\nabla \varphi^{c_2}_\lambda(x)$ is in a second ray, it must be at the upper endpoint of that ray. Let the decreasing direction of the other ray be given by the unit vector $v$. We compute
\begin{align*}
\frac{d}{dt}|_{t=0} \hybrid(x, x-\nabla \varphi^{c_2}_\lambda(x) + tv) - \varphi_\step(x-\nabla \varphi^{c_2}_\lambda(x) + tv) 
&= \step \langle v , -\frac{\nabla \varphi^{c_2}_\step(x)}{|\nabla \varphi^{c_2}_\step(x)|}\rangle  -\step,\\
&= \step (\langle v , \frac{y-x}{|y-x|}\rangle -1),\\
&< 0,
\end{align*}
the final inequality coming from the fact that $v \neq \frac{y-x}{|y-x|}$, and both have unit norm. As such, $t \mapsto \hybrid(x, x-\nabla \varphi^{c_2}_\step(x) + tv) - \varphi_\step(x-\nabla \varphi_\step^{c_2}(x) + tv)$ is strictly decreasing for $t \in (0,\epsilon)$ for some $\epsilon$, contradicting the fact that the infimum of $\hybrid(x,y) - \varphi_\step(y)$ is obtained at $x-\nabla \varphi^{c_2}_\step(x)$. 
\end{proof}

We can now prove that the upper endpoints of the transport rays of $-\varphi_\step/\step$ correspond to upper endpoints of the transport rays of $\varphi_\step^{c_2}/\step$.
\begin{lemma}
Suppose $(x,y) \in  \spt(\gamma_0)$ with $|x-y|>\step$, and suppose $\varphi_\step^{c_2}$ is differentiable at $x$ and $x$ is an interior point of $\Omega$. If $x-\nabla \varphi_\step^{c_2}(x)$ is at the upper end point of its transport ray of $-\varphi_\step/\step$ then $x$ is at the upper end point of a transport ray of $\varphi_\step^{c_2}/\step$.\label{lem:ifatupperendoftargetupperendofsource}
\end{lemma}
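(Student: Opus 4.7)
\textit{Proof plan.} The plan is to argue by contradiction. Writing $v := (x-y)/|x-y|$, \Cref{lem:gradientofvarphi} gives $\nabla\varphi_\step^{c_2}(x) = \step v$, so $p := x - \nabla\varphi_\step^{c_2}(x) = x - \step v$. Assume that $x$ is \emph{not} at the upper endpoint of its transport ray of $\varphi_\step^{c_2}/\step$. Since $x$ is interior to $\Omega$, this yields some $t_0 \in (0,\step)$ such that $\{x + tv : 0 \leq t \leq t_0\} \subset \Omega$ and $\varphi_\step^{c_2}(x + tv) = \varphi_\step^{c_2}(x) + \step t$ for all $t \in [0,t_0]$.

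Fix $t \in (0, t_0]$ and set $z_t := x + tv$ and $u_t := p + tv$. Two observations: $u_t = (1 - t/\step)\,p + (t/\step)\,x$ lies in $[p,x] \subset \Omega$ by convexity, and $|z_t - u_t| = |x - p| = \step$, so $\hybrid(z_t, u_t) = c_2(z_t, u_t) = \step^2/2$. The Kantorovich inequality $\varphi_\step^{c_2}(z_t) + \varphi_\step(u_t) \leq \hybrid(z_t, u_t)$ combined with the assumed increase of $\varphi_\step^{c_2}$ then gives $\varphi_\step(u_t) \leq \step^2/2 - \varphi_\step^{c_2}(x) - \step t$. On the other hand, from the proof of \Cref{lem:sourceandtargetisintransportray}, $-\varphi_\step$ grows at rate $\step$ on $[y,p]$, so $\varphi_\step(p) = \varphi_\step(y) - \step(|x-y| - \step)$; plugging in $\varphi_\step^{c_2}(x) = \hybrid(x,y) - \varphi_\step(y) = \step|x-y| - \step^2/2 - \varphi_\step(y)$ simplifies this to $\varphi_\step(p) = \step^2/2 - \varphi_\step^{c_2}(x)$. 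Hence $\varphi_\step(u_t) \leq \varphi_\step(p) - \step t$, and the $\step$-Lipschitz bound $|\varphi_\step(u_t) - \varphi_\step(p)| \leq \step|u_t - p| = \step t$ forces equality $\varphi_\step(u_t) = \varphi_\step(p) - \step t$, so $-\varphi_\step$ increases at rate exactly $\step$ along $[p, u_t]$.

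Since the known ray $[y,p]$ of $-\varphi_\step/\step$ has increasing direction $v$ by \Cref{lem:sourceandtargetisintransportray}, the segments $[y,p]$ and $[p,u_t]$ are both parallel to $v$ and concatenate into the single segment $[y,u_t] \subset \Omega$ along which $-\varphi_\step$ still grows at maximal rate. This contradicts the maximality of the transport ray with upper endpoint $p$, completing the proof. I expect the main subtlety to be verifying that the auxiliary point $u_t$ really stays inside $\Omega$ and that $[y,p] \cup [p,u_t]$ forms one genuine segment in direction $v$; both are handled by choosing $t_0 < \step$ so that $u_t$ lies in $[p,x]$ and by tracking signs carefully. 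The rest is a short Kantorovich-duality computation together with the $\step$-Lipschitz property of $\varphi_\step$.
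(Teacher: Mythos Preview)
Your proof is correct and follows essentially the same contradiction strategy as the paper: assume the ray of $\varphi_\step^{c_2}/\step$ extends past $x$ in direction $v$, and deduce that the ray of $-\varphi_\step/\step$ extends past $p = x - \step v$, contradicting the upper-endpoint hypothesis. The only tactical difference is that the paper establishes Kantorovich equality at the pair $(w,y)$ with $w = x + tv$ and then re-invokes the argument of \Cref{lem:sourceandtargetisintransportray}, whereas you apply the Kantorovich inequality at $(z_t,u_t)$ together with a Lipschitz squeeze; both routes yield the same extended segment $[y,\,p+tv]$.
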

\begin{proof}
From \Cref{lem:sourceandtargetisintransportray} we have that $x$ is in a transport ray of $\varphi_\step^{c_2}/\step$. Suppose it is not the upper endpoint. Then there exists $w$ on the same transport ray obtaining a strictly larger value of $\varphi_\step^{c_2}$. As such
\begin{align*}
\varphi_\step(y) &= \hybrid(x,y) - \varphi_\step^{c_2}(x),\\
&= \hybrid(x,y) - \varphi_\step^{c_2}(w) + \step |w-x|,\\
&= \hybrid(w, y) - \varphi_\step^{c_2}(w).
\end{align*}
Here the last line holds because the transport ray that $x$ is in is parallel to the segment $[x,y]$. Since $\varphi_\step^{c_2} = \varphi_\step^{\hybrid}$,
\begin{equation*}
\varphi_\step^{c_2}(w) = \inf_{z\in\Omega} \hybrid(w, z) - \varphi_\step(z),
\end{equation*}
and the infimum is obtained at $y$. Since $|w-y| > \step + |w-x|$, we obtain that all points on the segment $[w + \step \frac{y-w}{|y-w|}, y]$ are on a transport ray of $-\varphi_\step/\step$. The point $x-\nabla \varphi_\step^{c_2}(x)$ is in the interior of this ray, and thus is not the upper endpoint.
\end{proof}

We can now prove that $\rhond$ is absolutely continuous with respect to Lebesgue measure. The general argument is the following. Take $E$ Borel negligible, $x\in T_\step^{-1}(E)$, and suppose that there exists $y$ such that $(x,y) \in \spt( \gamma_0)$ with $|x-y|>\step$. Then, ignoring $x$ at the start of transport rays of $\varphi_\step^{c_2}/\step$ (which is a Borel negligible set anyway), we can show that $x = z - \nabla \varphi_\step(z)$ for $z \in E$. Since $x$ is not at the start of its transport ray, $z$ cannot be at the start of its transport ray. Away from the endpoints of transport rays the map $z \mapsto z-\nabla \varphi_\step(z)$ is Lipschitz\footnote{see the proof of Lemma 22 of \cite{caffarelli2002constructing}, or Proposition 6 of \cite{milne2022new}.}, allowing us to conclude that our set of $x$ is Lebesgue negligible. 
\begin{proposition}
The measure $\rhond$ satisfies $\rhond \ll \lebesgue$. \label{prop:rhond_AC}
\end{proposition}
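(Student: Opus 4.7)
The plan is to take a Borel set $E\subset\Omega$ with $\lebesgue(E)=0$ and prove that $\rhond(E)=0$. Since $\rhond(E)=\mud(T_\step^{-1}(E))$ and $\mud\le\mu\ll\lebesgue$, it suffices to show that $T_\step^{-1}(E)$ is Lebesgue-negligible, up to a further Lebesgue-null adjustment. I would restrict attention to the set $A$ of points $x$ in the interior of $\Omega$ at which $\varphi_\step^{c_2}$ is differentiable and such that there exists $y$ with $(x,y)\in\spt(\gamma_0)$ and $|x-y|>\step$; only $A\cap T_\step^{-1}(E)$ needs to be controlled, since $\mu$-almost every $x$ in the support of $\mud$ lies in $A$ (using Lipschitzness of $\varphi_\step^{c_2}$, $\lebesgue(\partial\Omega)=0$, and $\mu\ll\lebesgue$).

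The overall strategy is to construct a single Lipschitz map $S$ defined on (essentially) $E$ for which $S\circ T_\step$ is the identity on the relevant set; then $A\cap T_\step^{-1}(E)$ sits inside the Lipschitz image of a Lebesgue-null set, which is Lebesgue-null, and we would be done. First I would discard from $E$ the set of points that are endpoints of transport rays of $-\varphi_\step/\step$, a classical Lebesgue-negligible set (cf.~Chapter~3 of~\cite{santambrogio2015optimal}). I would also remove from $A$ the upper endpoints of transport rays of $\varphi_\step^{c_2}/\step$, likewise Lebesgue-negligible, which induces only a $\mu$-negligible modification.

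For $x$ in the trimmed $A\cap T_\step^{-1}(E)$, write $z:=T_\step(x)=x+\step\frac{y-x}{|y-x|}$, which lies in $E$ by construction. The contrapositive of \Cref{lem:ifatupperendoftargetupperendofsource} shows that $z$ is not the upper endpoint of its transport ray of $-\varphi_\step/\step$, and my trimming of $E$ has removed the lower endpoints as well, so $z$ lies in the relative interior of its transport ray. At such a $z$, wherever $\varphi_\step$ is differentiable, $\nabla\varphi_\step(z)$ must be parallel to the transport ray direction with magnitude $\step$; by the Kantorovich duality reasoning in the proof of \Cref{lem:sourceandtargetisintransportray}, $\varphi_\step$ increases along the ray from $z$ toward $y$, forcing $\nabla\varphi_\step(z)=\step\frac{y-x}{|y-x|}$. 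A short calculation then gives
\begin{equation*}
S(z):=z-\nabla\varphi_\step(z)=x.
\end{equation*}

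Finally I would invoke the fact (proof of Lemma~22 of~\cite{caffarelli2002constructing}, or Proposition~6 of~\cite{milne2022new}) that $S$ is Lipschitz on the set of points interior to transport rays of $\varphi_\step/\step$. Consequently the trimmed $A\cap T_\step^{-1}(E)$ is contained in $S(E')$ for some $E'\subset E$ with $\lebesgue(E')=0$, and is therefore itself Lebesgue-null; combined with $\mu\ll\lebesgue$, this yields $\mud(T_\step^{-1}(E))=0$ and hence $\rhond(E)=0$, as required. The main obstacle I expect is the careful bookkeeping of the several null sets (non-differentiability, and ray endpoints on both source and target sides) and the rigorous verification that the putative inverse $S\circ T_\step$ really agrees with the identity off the excised sets; once the interior-of-ray restriction is in place, the Lipschitz continuity of $S$ is by now standard.
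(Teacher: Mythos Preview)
Your approach is essentially the paper's own: show that on the relevant set $x=S(T_\step(x))$ for $S=I-\nabla\varphi_\step$, and use that $S$ is (countably) Lipschitz on the interior of the transport rays of $-\varphi_\step/\step$ to conclude that the preimage of a null set under $T_\step$ is null.

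One bookkeeping point deserves correction. Discarding ray endpoints from $E$ in order to force $z=T_\step(x)$ into the interior of its ray is circular: justifying that modification would require $\rhond$ of the discarded (Lebesgue-null) set to vanish, which is precisely what is being proved. The paper sidesteps this by observing directly that $z$ cannot be a lower endpoint: since $|x-y|>\step$, the segment $[z,y]$ has positive length and lies on the ray, so $y$ sits strictly below $z$. Combined with the contrapositive of \Cref{lem:ifatupperendoftargetupperendofsource} for the upper endpoint (which you already invoke), this places $z$ in the ray interior without touching $E$. A second minor point: $S$ is not globally Lipschitz on the full interior-of-rays set; one decomposes into the sets $A_j$ of points at distance at least $1/j$ from either endpoint, on each of which $S$ is Lipschitz, and covers by a countable union. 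With these two fixes, your argument coincides with the paper's.
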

\begin{proof}
Let $E \subset \Omega$ be Borel negligible. Then
\begin{align}
\rhond(E) &= (\pi_x)_\# \gamma_0|_{|x-y| > \step} (T_\step^{-1}(E)),\nonumber\\
&= \gamma_0( (T_\step^{-1}(E)\cap \mathcal{E}^c \cap G) \times \Omega \cap \{ |x-y| > \step \} \cap \spt(\gamma_0)), \label{eq:complicatedset}
\end{align}
where $\mathcal{E}^c$ is the complement of the set of ray endpoints of $\varphi_\step^{c_2}/\step$, and $G$ is as before. Both sets are Borel and have full Lebesgue measure\footnote{For a proof that $\lebesgue(\mathcal{E}) = 0$, see Lemma 25 of \cite{caffarelli2002constructing} or Lemma 3.1.8 of \cite{klartag2017needle}.}, justifying the equality \eqref{eq:complicatedset}. If $(x,y)$ is in the set appearing in \eqref{eq:complicatedset}, then by \Cref{lem:sourceandtargetisintransportray}, $x - \nabla \varphi_\step^{c_2}(x)$ is in a unique transport ray of $-\varphi_\step/\step$, and by  \Cref{lem:ifatupperendoftargetupperendofsource} $x-\nabla \varphi_\step^{c_2}(x)$ is not at the upper endpoint of that ray. Since $|x-y| > \step$, $x-\nabla \varphi_\step^{c_2}(x)$ is also not at the lower endpoint of that ray. By Lemma 3.6 of \cite{santambrogio2015optimal}, $-\varphi_\step/\step$ is differentiable at $x-\nabla\varphi_\step^{c_2}(x)$, and \Cref{lem:sourceandtargetisintransportray} implies that
\begin{equation*}
x = x- \nabla \varphi_\step^{c_2}(x) - \nabla \varphi_\step(x- \nabla \varphi_\step^{c_2}(x)) = T_\step(x) - \nabla \varphi_\step(T_\step(x)).
\end{equation*}
Thus, if $(x,y)$ is in the set appearing in \eqref{eq:complicatedset}, then $x= z - \nabla \varphi_\step(z)$ for some $z \in E$ and in the interior of a transport ray of $-\varphi_\step/\step$. As in Proposition 6 of \cite{milne2022new}, for each $j \in \{1, 2, \ldots\}$ set $A_j$ as the set of points $z$ that are on a transport ray of $-\varphi_\step/\step$ and more than distance $1/j$ from either endpoint, and recall that by Lemma 22 of \cite{caffarelli2002constructing}, $-\nabla \varphi_\step$ is a Lipschitz function on $A_j$. We therefore obtain that if $(x,y)$ is in the set appearing in \eqref{eq:complicatedset}, then
\begin{equation*}
x \in \bigcup_{j=1}^\infty (I-\nabla \varphi_\step)(E \cap A_j).
\end{equation*}
Since $E$ is Borel negligible, and $\nabla \varphi_\step$ is a Lipschitz map on $A_j$, we obtain that the set $(I-\nabla \varphi_\step)(E\cap A_j)$ is Lebesgue measurable for all $j$ and has measure $0$. By regularity of Lebesgue measure, there exists for each $j$ a Borel set $U_j$ containing $(I-\nabla \varphi_\step^{c_2})(E\cap A_j)$ with zero Lebesgue measure. As such,
\begin{align*}
\rhond(E)&\leq \sum_{j=1}^\infty \gamma_0(U_j \times \Omega) = \sum_{j=1}^\infty \mu (U_j)=0
\end{align*}
because $\mu \ll \mathcal{L}_d$. 
\end{proof}
We have therefore proven \Cref{thm:rho0AC}, and thus statement \ref{claim:rho_step_AC} of \Cref{thm:analogue_of_wavelet_shrinkage}, by proving that $\rhond$ and $\rhonc$ are absolutely continuous (\Cref{lem:rho_0_bounded_above} and \Cref{prop:rhond_AC}).

\section{Characterisation of an optimal map for the Huber cost}
\label{sec:existence_of_huber_map}
In this section we will prove statements \ref{claim:existence_of_huber_map} and \ref{claim:rho_step_obt_from_ST} of \Cref{thm:analogue_of_wavelet_shrinkage}. The essential result is the characterisation of an optimal map transporting $\nu$ to $\mu$ for the Huber cost $\hybrid$ as a composition of a Wasserstein 2 optimal map with a Wasserstein 1 optimal map. We note that the existence of an optimal map for the cost $\hybrid$ does not follow trivially from standard results in the optimal transport literature (e.g. Theorem 1.17 of \cite{santambrogio2015optimal}) since the cost $\hybrid(x,y)$ is not a strictly convex function of $|x-y|$. 

The following lemma proves that the gradient of $\varphi_\step$ is $\nu$ almost surely unchanged by applying an optimal transport map for $W_1(\nu,\rho_\step)$, and will be useful in proving the existence of an optimal transport map for the Huber cost. Throughout this section we tacitly assume the hypotheses of \Cref{thm:analogue_of_wavelet_shrinkage}.
\begin{lemma}
\label{lem:tildeS_not_identity}
Let $S_\step$ be an optimal transport map for $W_1(\nu,\rho_\step)$, which exists since $\nu \ll \lebesgue$. Let $\varphi_\step$ be a $\hybrid$-concave solution to \eqref{prob:seconddual}. Then $\nu$ almost everywhere $\nabla \varphi_\step(y)$ and $\nabla \varphi_\step(S_\step(y))$ exist. Further if $S_\step(y) \neq y$, they satisfy
\begin{equation}
    \nabla \varphi_\step(y) = \nabla \varphi_\step(S_\step(y)) = \step \frac{y -S_\step(y)}{|y -S_\step(y)|}.\label{eq:grad_varphi_parallel}
\end{equation}
\end{lemma}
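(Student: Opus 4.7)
I would first establish that both $\nabla\varphi_\step(y)$ and $\nabla\varphi_\step(S_\step(y))$ exist for $\nu$-a.e.~$y$, then use the extremality relation from \Cref{prop:meyer-for-multiscale} combined with the $\step$-Lipschitz bound on $\varphi_\step$ to force a pointwise equality $\varphi_\step(y)-\varphi_\step(S_\step(y)) = \step|y-S_\step(y)|$, and finally exploit Cauchy--Schwarz in the directional derivative to pin down $\nabla\varphi_\step$ at the endpoints of the segment $[y,S_\step(y)]$.

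\textbf{Existence of the gradients.} Since $\varphi_\step$ is $\hybrid$-concave and $\hybrid(x,\cdot)$ is $\step$-Lipschitz, $\varphi_\step \in \steplip$, so by Rademacher $\nabla\varphi_\step$ exists $\lebesgue$-a.e. The hypothesis $\nu\ll\lebesgue$ therefore gives the first gradient $\nu$-a.e., and statement \ref{claim:rho_step_AC} of \Cref{thm:analogue_of_wavelet_shrinkage} (proved in \Cref{sec:absolute_continuity_of_rho0}) gives $\rho_\step \ll \lebesgue$; together with $(S_\step)_\#\nu = \rho_\step$ this yields existence of $\nabla\varphi_\step(S_\step(y))$ for $\nu$-a.e.~$y$.

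\textbf{Pointwise duality equality.} Next, combining the extremality identity \eqref{eq:extremality_relation} with the fact that $S_\step$ is optimal for $W_1(\nu,\rho_\step) = W_1(\rho_\step,\nu)$ yields
\[
\int \bigl( \varphi_\step(y) - \varphi_\step(S_\step(y)) \bigr)\, d\nu(y) = \step \int |y - S_\step(y)| \, d\nu(y).
\]
Since $\varphi_\step \in \steplip$ bounds the integrand on the left by $\step|y - S_\step(y)|$ pointwise, equality of the integrals upgrades this inequality to equality for $\nu$-a.e.~$y$.

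\textbf{Geometric conclusion.} Finally, for such a $y$ with $S_\step(y) \neq y$ at which both gradients exist, applying the just-obtained equality to the segment $z_t := (1-t)y + tS_\step(y)$, $t\in[0,1]$, together with the $\step$-Lipschitz bound on each sub-segment $[y, z_t]$ and $[z_t, S_\step(y)]$, forces $\varphi_\step(z_t) = \varphi_\step(y) - \step t |y - S_\step(y)|$ on all of $[0,1]$. Differentiating at $t = 0$ gives
\[
\langle \nabla\varphi_\step(y),\, S_\step(y) - y \rangle = -\step |y - S_\step(y)|,
\]
and since $|\nabla\varphi_\step(y)| \le \step$, the equality case of Cauchy--Schwarz forces $\nabla\varphi_\step(y) = \step(y - S_\step(y))/|y - S_\step(y)|$. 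The same computation at $t = 1$ yields the identical value for $\nabla\varphi_\step(S_\step(y))$, giving \eqref{eq:grad_varphi_parallel}. The only non-routine obstacle in the plan is securing the $\nu$-a.e.~existence of $\nabla\varphi_\step(S_\step(y))$, which rests on the absolute continuity of $\rho_\step$ already established in \Cref{sec:absolute_continuity_of_rho0}; everything else is a routine application of Kantorovich duality and first-order calculus.
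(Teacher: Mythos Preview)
Your proposal is correct and follows essentially the same route as the paper: both arguments establish existence of the two gradients via $\nu\ll\lebesgue$ and $\rho_\step\ll\lebesgue$, then obtain the pointwise identity $\varphi_\step(y)-\varphi_\step(S_\step(y))=\step|y-S_\step(y)|$ from the extremality relation \eqref{eq:extremality_relation} (the paper phrases this as ``$\varphi_\step/\step$ is a Kantorovich potential for $W_1(\nu,\rho_\step)$'', which is the same thing), and finally read off the gradient along the resulting transport ray. The only cosmetic difference is that the paper cites Lemma~3.6 of \cite{santambrogio2015optimal} for the last step, whereas you unpack that lemma by hand via the affine behaviour on $[y,S_\step(y)]$ and the equality case of Cauchy--Schwarz.
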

\begin{proof}
The potential $\varphi_\step$ is $\hybrid$-concave, and thus $\varphi_\step \in \steplip$. Since $\nu \ll \lebesgue$, $\varphi_\step$ is therefore differentiable $\nu$ almost everywhere. Further,
\begin{align*}
    \nu(\{ y \mid \nabla \varphi_\step(\otmapnu(y)) \text{ exists}\}) &= \nu(\otmapnu^{-1}(\{ z \mid \nabla \varphi_\step(z) \text{ exists}\}),\\
    &= \rho_\step(\{ z \mid \nabla \varphi_\step(z) \text{ exists}\}),\\
    &= 1,
\end{align*}
since $\rho_\step\ll \lebesgue$ (\Cref{thm:rho0AC}). So $\nabla \varphi_\step(\otmapnu(y))$ exists $\nu$ almost everywhere as well. Since $\varphi_\step$ solves \eqref{prob:firstdual} via \Cref{lem:first_dual_and_second_are_equivalent}, we obtain via \Cref{prop:meyer-for-multiscale} that  $\varphi_\step/\step$ is a Kantorovich potential for $W_1(\nu, \rho_\step)$. Since $S_\step$ is an optimal transport map for $W_1(\nu,\rho_\step)$ we obtain that for $\nu$ almost all $y\in \Omega$,
\begin{equation*}
    \step|S_\step(y) - y| = \varphi_\step(y) - \varphi_\step(S_\step(y)).
\end{equation*}
Thus, if $S_\step(y) \neq y$, we obtain that $[y, S_\step(y)]$ is in a transport ray of $\varphi_\step/\step$. Via Lemma 3.6 of \cite{santambrogio2015optimal} we obtain \eqref{eq:grad_varphi_parallel} whenever $\varphi_\step$ is differentiable at both $y$ and $\otmapnu(y)$. 
\end{proof}
Now we can prove the existence of an optimal transport map $S_0$ from $\nu$ to $\mu$ under the cost $\hybrid$ by composing a Wasserstein 1 optimal map from $\nu$ to $\rho_\step$ with a Wasserstein 2 optimal map from $\rho_\step$ to $\mu$. Conversely, we will prove that all such optimal $S_0$ can be written in this way. The following result proves statement \ref{claim:existence_of_huber_map} of \Cref{thm:analogue_of_wavelet_shrinkage}.
\begin{lemma}
\label{lem:existence_of_otmap_for_hybrid}
Let $\varphi_\step$ be a $\hybrid$-concave solution to \eqref{prob:seconddual}. If $R_\step$ is a Borel map almost everywhere equal to $I-\nabla \varphi_\step$, and $T_\step$ is a Borel map almost everywhere equal to $I-\nabla \varphi_\step^{c_2}$, then
\begin{equation}
    R_\step\circ T_\step(x) = x\label{eq:ae-inverses}
\end{equation}
$\mu$ almost everywhere, so we write $R_\step$ as $T_\step^{-1}$. A map $S_0$ is an optimal transport map for transporting $\nu$ to $\mu$ under the cost $\hybrid$ if and only if $S_0$ can be written as $S_0 = T_\step^{-1} \circ S_\step$, where $S_\step$ is an optimal map for $W_1(\nu,\rho_\step)$.
\end{lemma}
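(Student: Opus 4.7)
The identity $R_\step\circ T_\step = I$ $\mu$-a.e.\ can be established by a direct gradient argument. I would exploit that $\varphi_\step^{c_2}$ and $\varphi_\step$ form a Kantorovich pair for $W_2(\mu,\rho_\step)$ (by \Cref{prop:meyer-for-multiscale}), so that $\varphi_\step^{c_2}(x)+\varphi_\step(T_\step(x)) = \tfrac12|x - T_\step(x)|^2$ for $\mu$-a.e.\ $x$. Since $\rho_\step \ll \lebesgue$ by \Cref{thm:rho0AC}, the Lipschitz function $\varphi_\step$ is differentiable at $T_\step(x)$ for $\mu$-a.e.\ $x$ (the non-differentiability set is Lebesgue null, hence $\rho_\step$-null, hence pulls back to a $\mu$-null set under $T_\step$). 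For each such $x$, $c_2$-duality forces $y\mapsto \varphi_\step(y)-\tfrac12|x-y|^2$ to attain its supremum at $y = T_\step(x)$, so its gradient vanishes there, giving $\nabla\varphi_\step(T_\step(x)) = T_\step(x)-x$ and hence $R_\step(T_\step(x))=x$.

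For the $(\Leftarrow)$ direction I suppose $S_\step$ is $W_1$-optimal from $\nu$ to $\rho_\step$ and set $S_0:=T_\step^{-1}\circ S_\step$. The pushforward identity $\rho_\step = (T_\step)_\#\mu$, combined with $R_\step\circ T_\step=I$ $\mu$-a.e., gives $(T_\step^{-1})_\#\rho_\step=\mu$, hence $(S_0)_\#\nu=\mu$. To compute the cost, I would split $\Omega$ into $A=\{S_\step=I\}$ and $B=A^c$. On $A$, $S_0(y)=y-\nabla\varphi_\step(y)$ with $|\nabla\varphi_\step(y)|\leq\step$ by Lipschitzness, so $\hybrid(y,S_0(y))=\tfrac12|\nabla\varphi_\step(y)|^2$. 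On $B$, \Cref{lem:tildeS_not_identity} yields $\nabla\varphi_\step(S_\step(y)) = \step(y-S_\step(y))/|y-S_\step(y)|$, giving $|y-S_0(y)|=|y-S_\step(y)|+\step$ and $\hybrid(y,S_0(y))=\step|y-S_\step(y)|+\step^2/2$. Using $(S_\step)_\#\nu=\rho_\step$ to rewrite $\tfrac12 W_2^2(\mu,\rho_\step) = \tfrac12\int|\nabla\varphi_\step(S_\step(y))|^2\,d\nu(y)$ and splitting over $A\cup B$, one verifies $\int\hybrid(y,S_0(y))\,d\nu=\tfrac12 W_2^2(\mu,\rho_\step)+\step W_1(\rho_\step,\nu)=\cI_{\hybrid}(\mu,\nu)$ by \Cref{cor:wrof-is-hybrid}, so $S_0$ is $\hybrid$-optimal.

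For the $(\Rightarrow)$ direction, let $S_0$ be $\hybrid$-optimal, define $S_\step$ by the soft-thresholding formula \eqref{eq:soft_thresholding_tmap} (setting $S_\step(y)=y$ when $S_0(y)=y$), and put $\tilde\rho:=(S_\step)_\#\nu$. The plans $(I,S_\step)_\#\nu\in\Pi(\nu,\tilde\rho)$ and $(S_\step,S_0)_\#\nu\in\Pi(\tilde\rho,\mu)$ have costs computable via $|S_\step(y)-S_0(y)|=\min(|y-S_0(y)|,\step)$ and $|y-S_\step(y)|=(|y-S_0(y)|-\step)_+$. Summing these and simplifying yields $\tfrac12 W_2^2(\tilde\rho,\mu) + \step W_1(\tilde\rho,\nu) \leq \int \hybrid(y,S_0(y))\,d\nu(y) = \cI_{\hybrid}(\mu,\nu)$. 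Since the right side is the minimum of \eqref{prob:initial-multiscale-statement}, uniqueness of the WROF minimizer (\Cref{prop:meyer-for-multiscale}) forces $\tilde\rho=\rho_\step$ and equality throughout, so $S_\step$ is $W_1$-optimal from $\nu$ to $\rho_\step$ and $(S_\step,S_0)_\#\nu$ is the $W_2$-optimal plan from $\rho_\step$ to $\mu$. Brenier uniqueness, applicable because $\rho_\step\ll\lebesgue$, identifies this plan with $(I,T_\step^{-1})_\#\rho_\step$, giving $S_0=T_\step^{-1}\circ S_\step$ $\nu$-a.e.

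The main subtlety is the bookkeeping of null sets when invoking gradient identities for Lipschitz potentials: the absolute continuity of $\rho_\step$ established in the previous section is indispensable both for transferring a.e.\ differentiability of $\varphi_\step$ to $\mu$-a.e.\ differentiability at $T_\step(x)$ in the first step and for applying Brenier uniqueness in the $(\Rightarrow)$ direction. The cost calculations in both directions of the iff must be organized so that the partition $A \cup B$ and the applications of \Cref{lem:tildeS_not_identity} and \Cref{lem:gradientofvarphi} yield cancellations that exactly match $\cI_{\hybrid}(\mu,\nu)$ without residual error terms.
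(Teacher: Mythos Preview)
Your proof is correct. The treatment of $R_\step\circ T_\step=I$ $\mu$-a.e.\ and the $(\Leftarrow)$ direction are essentially the paper's argument: the paper proves the pointwise identity $\hybrid(S_0(y),y)=\hybrid(S_0(y),S_\step(y))+\step|y-S_\step(y)|$ (splitting on $S_\step(y)=y$ versus $S_\step(y)\neq y$ via \Cref{lem:tildeS_not_identity}) and then integrates, while you compute the cost case-by-case on $A$ and $B$; the content is the same.

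Your $(\Rightarrow)$ direction, however, takes a genuinely different route. The paper sets $S_\step:=T_\step\circ S_0$ and verifies $W_1$-optimality directly by checking the Kantorovich duality relation $\step|y-T_\step(S_0(y))|=\varphi_\step(y)-\varphi_\step(T_\step(S_0(y)))$ $\nu$-a.e., using \Cref{lem:gradientofvarphi} on $\{|S_0(y)-y|\le\step\}$ and the transport-ray structure from \Cref{lem:sourceandtargetisintransportray} on the complement. You instead define $S_\step$ via the soft-thresholding formula, exhibit admissible plans whose costs sum to exactly $\cI_{\hybrid}(\mu,\nu)$, and then invoke uniqueness of the WROF minimizer together with Brenier uniqueness (available since $\rho_\step\ll\lebesgue$) to force both $\tilde\rho=\rho_\step$ and the factorization $S_0=T_\step^{-1}\circ S_\step$. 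Your variational argument sidesteps the transport-ray geometry entirely, at the price of anticipating the soft-thresholding construction that the paper derives as a downstream consequence in \Cref{prop:soft_thresholding_of_transport_map}; the paper's route keeps that formula as a corollary and stays within the gradient/ray framework already built. Both arguments are clean and rely on $\rho_\step\ll\lebesgue$ in essential ways.
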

\begin{proof}
The claim \eqref{eq:ae-inverses} is well known, and follows since $\varphi_\step$ is a Kantorovich potential for $\squaredw{\rho_\step}{\mu}$ and $T_\step$ is an optimal transport map for $\squaredw{\mu}{\rho_\step}$, and thus
\begin{equation*}
    \varphi_\step^{c_2}(x) + \varphi_\step(T_\step(x)) = \frac{1}{2}|x-T_\step(x)|^2,
\end{equation*}
$\mu$ almost everywhere. Using $\rho_\step \ll \lebesgue$, one can then easily show \eqref{eq:ae-inverses}. Let $S_0$ be given by $S_0 = T_\step^{-1} \circ S_\step$. This same equality also implies that $(T_\lambda^{-1})_\#\rho_\step = \mu$, and so $(S_0)_\# \nu = \mu$. We now wish to prove that for $\nu$ almost all $y$,
\begin{equation*}
    \hybrid(S_0(y), y) = \hybrid(S_0(y), \otmapnu(y)) + \step|y-\otmapnu(y)|.
\end{equation*}
This is clear if $\otmapnu(y) = y$. If $\otmapnu(y) \neq y$, then this equality is an immediate consequence of  \Cref{lem:tildeS_not_identity}. We therefore compute
\begin{align*}
    \int_\Omega \hybrid(S_0(y), y) d\nu(y) &= \int_\Omega \hybrid(S_0(y), \otmapnu(y))d\nu(y) + \step\int_\Omega|y-\otmapnu(y)|d\nu(y),\\
    &= \int_\Omega c_2(T_0(z), z) d\rho_\step(z) + \step W_1(\rho_\step, \nu),\\
    &= \squaredw{\mu}{\rho_\step} + \step W_1(\rho_\step, \nu),\\
    &= \cI_{\hybrid}(\mu,\nu),
\end{align*}
where the last line follows from \Cref{cor:wrof-is-hybrid}. This verifies that $S_0$ is optimal for transporting $\nu$ to $\mu$ with the pointwise cost $\hybrid$. 

Conversely, suppose $S_0$ is optimal for transporting $\nu$ to $\mu$ under this cost. If we can prove that $T_\step \circ S_0$ is optimal for $W_1(\nu, \rho_\step)$, we will be done. Clearly, $(T_\step \circ S_0)_\# \nu = \rho_\step$, and since $\varphi_\step/\step$ is a Kantorovich potential for $W_1(\nu, \rho_\step)$ via \Cref{prop:meyer-for-multiscale}, optimality of this map will be proved if we can show that
\begin{equation}
    \step |y - T_\step(S_0(y))| = \varphi_\step(y) - \varphi_\step(T_\step(S_0(y))) \label{eq:tsteps0respectstrays}
\end{equation}
$\nu$ almost everywhere. To see this, observe that $(S_0, I)_\# \nu$ is an optimal plan for \eqref{prob:underlyingOTproblem}, and $(S_0(y), y)$ is in the support of this plan for $\nu$ almost all $y$. Conditioning $\nu$ on $|S_0(y) - y| \leq \step$, we obtain $y = T_\step(S_0(y))$ with probability $1$ via \Cref{lem:gradientofvarphi}, and thus \eqref{eq:tsteps0respectstrays} holds trivially. Conditioning on $|S_0(y) - y| > \step$, we may use \Cref{lem:sourceandtargetisintransportray} to obtain that $[T_\step(S_0(y)), y]$ is in a transport ray of $\varphi_\step/\step$ $\nu$ almost surely, which proves \eqref{eq:tsteps0respectstrays} in this case. 
\end{proof}
The following result proves statement \ref{claim:rho_step_obt_from_ST} of \Cref{thm:analogue_of_wavelet_shrinkage} by demonstrating that by applying the soft thresholding operator \eqref{eq:soft_thresholding_tmap} to the map $S_0$, one recovers $S_\step$.
\begin{proposition}
\label{prop:soft_thresholding_of_transport_map}
Let $S_0 = T_\step^{-1} \circ S_\step$ be an optimal transport map from $\nu$ to $\mu$ for the cost $\hybrid$ as obtained in \Cref{lem:existence_of_otmap_for_hybrid}. Then $\nu$ almost everywhere,
\begin{equation*}
    \otmapnu(y) = y + s_\step(|S_0(y)-y|)\frac{S_0(y)-y}{|S_0(y) -y|},
\end{equation*}
where $s_\step(|S_0(y)-y|)\frac{S_0(y)-y}{|S_0(y) -y|} = 0$ if $S_0(y) = y$. 
\end{proposition}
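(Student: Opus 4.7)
The plan is to rewrite the decomposition $S_0 = T_\step^{-1} \circ S_\step$ as $S_\step = T_\step \circ S_0$ $\nu$-almost everywhere, and then to apply the pointwise formula for $T_\step$ furnished by statement 3 of \Cref{prop:relation_between_Tstep_andvarphistep} (which is \Cref{lem:gradientofvarphi}). A case-check on the branches $|S_0(y)-y| \leq \step$ versus $|S_0(y)-y| > \step$ will then reproduce the soft-thresholding formula.

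First, I would establish $S_\step(y) = T_\step(S_0(y))$ for $\nu$-a.e.\ $y$. By \Cref{lem:existence_of_otmap_for_hybrid} the identity $S_0(y) = T_\step^{-1}(S_\step(y))$ holds $\nu$-a.e., so applying $T_\step$ yields $T_\step(S_0(y)) = T_\step(T_\step^{-1}(S_\step(y)))$. Since $T_\step = I - \nabla \varphi_\step^{c_2}$ and $T_\step^{-1} = I - \nabla \varphi_\step$ are the Brenier maps transporting, respectively, $\mu$ to $\rho_\step$ and $\rho_\step$ to $\mu$ (both measures being absolutely continuous by hypothesis and by \Cref{thm:rho0AC}), the standard a.e.\ bijection property of Brenier maps gives $T_\step \circ T_\step^{-1} = \mathrm{id}$ $\rho_\step$-a.e. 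Combined with $(S_\step)_\# \nu = \rho_\step$, this implies $T_\step(S_0(y)) = S_\step(y)$ $\nu$-a.e., as claimed.

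Second, since the Huber cost is symmetric and $S_0$ is optimal for transporting $\nu$ to $\mu$, the plan $\tilde\gamma_0 := (S_0, I)_\# \nu$ is an optimal plan for $\cI_{\hybrid}(\mu,\nu)$, and for $\nu$-a.e.\ $y$ we have $(S_0(y), y) \in \spt(\tilde\gamma_0)$. Statement 3 of \Cref{prop:relation_between_Tstep_andvarphistep} applied to this plan and these pairs therefore yields
\[
S_\step(y) \;=\; T_\step(S_0(y)) \;=\; \begin{cases} y, & |S_0(y) - y| \leq \step,\\[2pt] \bigl(1 - \tfrac{\step}{|S_0(y) - y|}\bigr)S_0(y) + \tfrac{\step}{|S_0(y) - y|}\, y, & |S_0(y) - y| > \step. \end{cases}
\]

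Finally, a direct case-check matches this with the target formula \eqref{eq:soft_thresholding_tmap}. In the first branch both sides equal $y$, since $s_\step(t) = 0$ for $|t| \leq \step$. In the second branch, writing $t := |S_0(y)-y| > \step$ and rearranging the convex combination as $y + (t-\step)\frac{S_0(y)-y}{t}$ gives exactly $y + s_\step(t)\frac{S_0(y)-y}{|S_0(y)-y|}$, since $s_\step(t) = (t-\step)_+ = t-\step$ for $t > \step \geq 0$. No significant obstacle arises; the only mildly delicate point is invoking the two-sided a.e.\ inverse of the Brenier map $T_\step$, which is a classical consequence of applying Brenier's theorem to both $(\mu, \rho_\step)$ and $(\rho_\step, \mu)$.
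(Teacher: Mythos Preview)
Your proof is correct but proceeds along a different axis than the paper's. The paper splits into the cases $S_\step(y)=y$ versus $S_\step(y)\neq y$: in the first it uses only that $T_\step^{-1}=I-\nabla\varphi_\step$ moves points at most distance $\step$; in the second it invokes \Cref{lem:tildeS_not_identity} to see that $S_0(y)-y$ and $S_\step(y)-y$ are parallel with $|S_0(y)-y|=|S_\step(y)-y|+\step$, from which the soft-thresholding identity drops out. Your argument instead inverts the decomposition to $S_\step = T_\step\circ S_0$ and then reads off $T_\step(S_0(y))$ directly from the branching formula of \Cref{prop:relation_between_Tstep_andvarphistep}(3), splitting on $|S_0(y)-y|\le\step$ versus $>\step$. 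This bypasses \Cref{lem:tildeS_not_identity} entirely and is arguably more streamlined, at the cost of needing the reverse identity $T_\step\circ T_\step^{-1}=\mathrm{id}$ $\rho_\step$-a.e.\ (the paper only states the $\mu$-a.e.\ direction in \Cref{lem:existence_of_otmap_for_hybrid}); as you note, this is immediate from Brenier's theorem applied to $(\rho_\step,\mu)$ once $\rho_\step\ll\lebesgue$ is in hand from \Cref{thm:rho0AC}.
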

\begin{proof}
Take $\varphi_\step$ and $T_\step^{-1}$ as in \Cref{lem:existence_of_otmap_for_hybrid}, and set
\begin{equation*}
E:= \otmapnu^{-1}(\{ z \mid T^{-1}_\step(z) = z - \nabla \varphi_\step(z)\}).    
\end{equation*}
Then $E$ has full $\nu$ measure. If $y \in E$ and $\otmapnu(y) = y$, then $S_0(y) = T^{-1}_\step(y)$. Since $\varphi_\step \in \steplip$, we obtain that
\begin{equation*}
    y + s_\step(|S_0(y)-y|)\frac{S_0(y)-y}{|S_0(y) -y|} = y = \otmapnu(y).
\end{equation*}
If $y \in E$ and $\otmapnu(y) \neq y$, then by \Cref{lem:tildeS_not_identity}, $|S_0(y)-y|>\step$ $\nu$ almost surely. Thus, 
\begin{align*}
    y + s_\step(|S_0(y)-y|)\frac{S_0(y)-y}{|S_0(y) -y|} &= y + (|S_0(y) - y| - \step))\frac{\otmapnu(y)-y}{|\otmapnu(y) -y|},\\
    &= y + |\otmapnu(y) - y|\frac{\otmapnu(y)-y}{|\otmapnu(y) -y|},\\
    &= \otmapnu(y).
\end{align*}
\end{proof}
% \begin{remark}
% \Cref{prop:soft_thresholding_of_transport_map} establishes the claim we made in \Cref{sec:multiscale_introduction} regarding how $\rho_\step$ is obtained by a decomposition of a transport map into features and details. Indeed, $S_0$ is that transport map, optimal for moving $\nu$ to $\mu$ under the cost $\hybrid$. The solution $\rho_\step$ to \eqref{prob:initial-multiscale-statement} is obtained as $(\otmapnu)_\# \nu$, where $\otmapnu$ is derived from $S_0$ according to the following procedure: 
% \begin{enumerate}
%     \item If $|S_0(y) - y| \leq \step$, we classify that transport as a detail and do nothing, i.e. $\otmapnu(y) = y$. 
%     \item If $|S_0(y) - y| > \step$, we classify this transport as a feature, and partially complete it until the remaining transport is classified as a detail. 
% \end{enumerate}
% \end{remark}

\section{Iterative procedures involving \eqref{prob:initial-multiscale-statement}}
\label{sec:iterative_procs}
In this section we study the iterative procedures described in \Cref{sec:intro-iterative_regularization} and \Cref{sec:intro-nonlinear_Plancherel}. The main content is a proof of \Cref{prop:convergence_to_nu} and \Cref{thm:multiscale}.

\subsection{Iterative regularization}
\label{subsec:iterative_denoising}
Here we will prove our iterative regularization result \Cref{prop:convergence_to_nu}. Recall the setting;
we take $\mu,\nu \ll \mathcal{L}_d$, and $(\step_n)_{n=0}^\infty$ a sequence of positive step sizes with sum converging to $+\infty$. Set $\mu_0 := \mu$, and for $n\geq 0$ define
\begin{equation*}
    \mu_{n+1} := \argmin_{\rho \in \mathcal{P}(\Omega)} \frac{1}{2}W_2^2(\rho, \mu_{n}) + \step_{n} W_1(\rho, \nu). 
\end{equation*}
We note that $(\mu_n)_{n=1}^\infty$ is well defined given \Cref{lem:solution_to_first_dual_exists}, \Cref{prop:meyer-for-multiscale}, and \Cref{thm:rho0AC}. The first two results establish the existence of a unique solution to the minimization problem in \eqref{def:mu_n_def} when $\mu \ll \mathcal{L}_d$, and the latter guarantees that this solution will be absolutely continuous as well. 

Before we analyse the convergence of the sequence $(\mu_n)_{n=1}^\infty$, we establish a simple estimate on $W_1(\mu_n,\nu)$.
\begin{lemma}
\label{lem:W1_distance_ito_tildemud}
Let $\Omega$ be convex and compact with non-negligible interior. Take $\mu \ll \mathcal{L}_d$, and let $\rho_\step$ solve \eqref{prob:initial-multiscale-statement}. Denoting an arbitrary optimal transport plan for the cost $\hybrid$ from $\mu$ to $\nu$ as $\gamma_0$, define $\muc$ and $\mud$ as in \eqref{def:muc_and_mud}. Then
\begin{equation}
    W_1(\rho_\step, \nu ) \leq \mud(\Omega) \diam(\Omega), \label{eq:W1_dist_above_by_diam}
\end{equation}
where $\diam(\Omega) = \sup\{ |x-y| \mid x,y \in \Omega\}$.
\end{lemma}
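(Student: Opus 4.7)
The plan is to exploit a direct decomposition $\nu = \nu^a + \nu^b$ which mirrors the decomposition $\mu = \muc + \mud$ and to show that the ``small-scale'' parts on each side coincide, so only the mass of $\mud$ actually has to be re-transported to get from $\rho_\step$ to $\nu$.

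First, I would introduce $\nu^a := (\pi_y)_\# \gamma_0|_{\{|x-y|\leq \step\}}$ and $\nu^b := (\pi_y)_\# \gamma_0|_{\{|x-y|>\step\}}$, so that $\nu = \nu^a + \nu^b$, and I would invoke equation \eqref{eq:smallscalegamma0} from the proof of \Cref{lem:rho_0_bounded_above}, which says $\gamma_0|_{\{|x-y|\leq \step\}} = (I,T_\step)_\# \muc$. Taking the second marginal yields the key identity
\begin{equation*}
    \nu^a = (T_\step)_\# \muc = \rhonc.
\end{equation*}
Since $\rho_\step = \rhonc + \rhond$ by \eqref{def:rhonc_and_rhond}, this gives $\rho_\step - \nu = \rhond - \nu^b$, and both $\rhond$ and $\nu^b$ have total mass equal to $\mud(\Omega)$.

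Next, I would construct an explicit transport plan between $\rho_\step$ and $\nu$ that ``freezes'' the common part. If $\mud(\Omega) = 0$, then $\spt(\gamma_0) \subset \{|x-y|\leq \step\}$ means $\nu \in B_\step(\mu)$, and \Cref{prop:meyer-for-multiscale-simplified} gives $\rho_\step = \nu$, so \eqref{eq:W1_dist_above_by_diam} holds trivially. Otherwise, define
\begin{equation*}
    \gamma := (I, I)_\# \rhonc + \tfrac{1}{\mud(\Omega)}\, \rhond \otimes \nu^b.
\end{equation*}
A quick check of marginals gives $(\pi_x)_\# \gamma = \rhonc + \rhond = \rho_\step$ and $(\pi_y)_\# \gamma = \rhonc + \nu^b = \nu^a + \nu^b = \nu$, so $\gamma \in \Pi(\rho_\step, \nu)$.

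Finally, I would bound the cost: the first term contributes $0$ because it is supported on the diagonal, and the second contributes at most $\diam(\Omega)$ times the total mass $\mud(\Omega)$, yielding
\begin{equation*}
    W_1(\rho_\step,\nu) \leq \int_{\Omega \times \Omega}|x-y|\, d\gamma \leq \diam(\Omega) \cdot \mud(\Omega),
\end{equation*}
which is \eqref{eq:W1_dist_above_by_diam}. There is no real obstacle here; the only substantive input is recognizing that the pushforward identity from \Cref{lem:rho_0_bounded_above} gives $\nu^a = \rhonc$ exactly, after which the rest is a one-line coupling construction.
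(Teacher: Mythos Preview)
Your proof is correct. The core insight---that the small-scale part $\rhonc$ can be cancelled against a corresponding piece of $\nu$, leaving only mass $\mud(\Omega)$ to transport over distance at most $\diam(\Omega)$---is the same as in the paper, but the execution differs. The paper works on the dual side: it writes $W_1(\rho_\step,\nu)=\sup_{u\in\onelip}\langle u,\rhond-(\nu-\rhonc)\rangle$, invokes only the inequality $\rhonc\leq\nu$ from \Cref{lem:rho_0_bounded_above} to see that $\nu-\rhonc$ is a non-negative measure of mass $\mud(\Omega)$, and then uses homogeneity of $W_1$ to pull out the factor $\mud(\Omega)$. You instead work on the primal side, using the stronger exact identity $\nu^a=\rhonc$ (which is indeed contained in \eqref{eq:smallscalegamma0}) to build an explicit coupling $(I,I)_\#\rhonc+\mud(\Omega)^{-1}\rhond\otimes\nu^b$. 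Your route is slightly more constructive and avoids the dual formulation; the paper's route needs only the weaker inequality $\rhonc\leq\nu$ rather than the full identity, though of course $\nu-\rhonc=\nu^b$ holds anyway. Both arguments are equally short once the decomposition from \Cref{sec:adecompof_rho} is in hand.
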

% \begin{remark}
% Note the difference between $\tildemuc$ and $\muc$; in the former, we consider all transport under $\gamma_0$ that moves strictly less than distance $\step$, while in the latter we consider all transport that moves at most distance $\step$.   
% \end{remark}
% \begin{remark}
% Note that $\tildemuc$ is well defined independently of our choice of optimal $\gamma_0$, which is one reason why we use a strict inequality in the definition of $\tildemuc$. Indeed, with a small variation to the proof of \Cref{lem:rho_0_bounded_above} one can show that every $\gamma_0$ optimal for \eqref{prob:underlyingOTproblem} coincides with $(I, I-\nabla \varphi_0^{c_2})_\# \mu$ on the set $|x-y| < \step$. Consequently, $\tildemud = \mu - \tildemuc$ is also well defined independently of $\gamma_0$.
% \end{remark}
\begin{proof}
Recall the definitions of $\rhonc$ and $\rhond$ from \eqref{def:rhonc_and_rhond}.
Note that if $\rhond(\Omega) = 0$, we obtain via \Cref{lem:rho_0_bounded_above} that $\nu = \rhonc = \rho_\step$, and thus \eqref{eq:W1_dist_above_by_diam} holds. We therefore proceed assuming that $\rhond(\Omega) \neq 0$. We have
\begin{align*}
    W_1(\rho_\step, \nu) &= \sup_{u \in \onelip} \langle u, \rho_\step - \nu \rangle,\\
    &= \sup_{u \in \onelip} \langle u, \rhonc + \rhond - \nu \rangle,\\
    &= \sup_{u \in \onelip} \langle u,\rhond - (\nu- \rhonc) \rangle
\end{align*}
Via \Cref{lem:rho_0_bounded_above} we get that $\nu - \rhonc$ is a non-negative measure. Moreover, it has the same total mass as $\rhond$. As such
\begin{align*}
    W_1(\rho_\step, \nu) &= \rhond(\Omega)  \sup_{u \in \onelip} \langle u,\frac{\rhond}{\rhond(\Omega)} - \frac{\nu- \rhonc}{(\nu - \rhonc)(\Omega)} \rangle,\\
    &= \mud(\Omega) W_1\left( \frac{\rhond}{\rhond(\Omega)}, \frac{\nu- \rhonc}{(\nu - \rhonc)(\Omega)}\right),\\
    &\leq \mud(\Omega) \diam(\Omega),
\end{align*}
as claimed. 
\end{proof}
We can now prove our convergence result for $(\mu_n)_{n=1}^\infty$, which relies on \Cref{lem:W1_distance_ito_tildemud}. 
\begin{proof}[\textit{Proof of \Cref{prop:convergence_to_nu}}]
We first establish that $W_1(\mu_n, \nu)$ is monotonically decreasing in $n$. Indeed, by definition of $\mu_n$,
\begin{equation*}
    W_1(\mu_n,\nu) \leq W_1(\mu_{n-1}, \nu) - \frac{1}{2 \step_{n-1}}W_2^2(\mu_n, \mu_{n-1}) \leq W_1(\mu_{n-1}, \nu).
\end{equation*}
Iterating the first inequality, we also obtain that
\begin{equation*}
    W_1(\mu_n, \nu) \leq W_1(\mu,\nu) - \sum_{i=0}^{n-1}\frac{1}{2\step_i}W_2^2(\mu_{i+1}, \mu_{i}). 
\end{equation*}
Thus, 
\begin{equation}
    \sum_{i=0}^{\infty}\frac{1}{2\step_i}W_2^2(\mu_{i+1}, \mu_{i})< \infty.\label{eq:sum_of_W2squared_finite}
\end{equation} 
For each $i$, let $\gamma_i$ be an optimal plan for the transport from $\mu_i$ to $\nu$ under the cost $c_{2, \step_i}$, and define
\begin{equation*}
    \mudi := (\pi_x)_\#(\gamma_i|_{|x-y|> \step_i}).
\end{equation*}
Let $\varphi_i$ be a solution to \eqref{prob:firstdual} with $\mu$ replaced by $\mu_i$ and $\lambda$ replaced by $\lambda_i$. Since $I-\nabla \varphi_i^{c_2}$ is almost everywhere equal to an optimal transport map from $\mu_i$ to $\mu_{i+1}$ (see \Cref{prop:meyer-for-multiscale}), and using \Cref{lem:gradientofvarphi}, we obtain
\begin{equation*}
    \frac{1}{2\step_i}W_2^2(\mu_i, \mu_{i+1}) \geq \frac{1}{2\step_i} \step_i^2 \mudi(\Omega) = \frac{1}{2}\step_i \mudi(\Omega). 
\end{equation*}
As such, \eqref{eq:sum_of_W2squared_finite} implies
\begin{equation}
    \sum_{i=1}^\infty \step_i \mudi(\Omega) < \infty \label{eq:sum_of_tildemudi_finite}.
\end{equation}
By \eqref{eq:unbounded_eta_sum}, we obtain that $\liminf_i\mudi(\Omega) = 0$. \Cref{lem:W1_distance_ito_tildemud} implies that
\begin{equation*}
    W_1(\mu_{i+1}, \nu) \leq \mudi(\Omega) \diam(\Omega).
\end{equation*}
Since $\liminf_i \mudi(\Omega) = 0$, we therefore obtain \begin{equation}
    \liminf_i W_1(\mu_{i}, \nu) = 0,\label{eq:liminf_is_step}
\end{equation}
as well. But $W_1(\mu_i, \nu)$ is monotonically decreasing in $i$, and so \eqref{eq:liminf_is_step} implies \eqref{eq:convergence_to_nu}. 
\end{proof}
\subsection{Multiscale transport and a non-linear energy decomposition}
\label{subsec:adding_detail}
In this section we prove \Cref{thm:multiscale}. We already have most of the necessary ingredients. Let us recall the setting of this procedure. We assume $\mu \ll \lebesgue$ and suppose $\step_0$ is given. For each $n \geq 0$, set $\step_{n+1} = \step_n/2$ and define
\begin{equation}
    \nu_{n+1} := \argmin_{\rho \in \mathcal{P}(\Omega)}\squaredw{\rho}{\mu} + \step_{n} W_1(\rho, \nu_{n}),\label{eq:multiscale_measures_again}
\end{equation}
where $\nu_0 := \nu$.
\begin{remark}
\label{remark:how_is_it_analogous?}
This procedure consists of iteratively solving \eqref{prob:general_minimization_problem}, starting with $y_0^*$ as $\nu_0$ and replacing it at each stage by $\nu_n$, as well as halving the scale parameter. If the same is done in the context of ROF, starting with $y_0^* = 0$, one obtains a sequence of functions $(w_n)_{n=1}^\infty$ which are the partial sums of the multiscale decomposition in \Cref{thm:multiscale_for_ROF}. In this light \eqref{eq:multiscale_measures_again} is analogous to \eqref{prob:multiscale}.
\end{remark}

\begin{proof}[\textit{Proof of \Cref{thm:multiscale}}]
The assumption $\mu \ll \lebesgue$, together with \Cref{lem:solution_to_first_dual_exists} and \Cref{prop:meyer-for-multiscale} guarantee for all $n$ that the argmin in \eqref{eq:multiscale_for_measures} exists and is unique. To prove statement \ref{claim:new_convergence}, we note that by \eqref{eq:displacement_bound_for_w2optimal} 
\begin{align*}
    \frac{1}{2}W_2^2(\mu,\nu_n) \leq \frac{1}{2}\step_{n-1}^2 &= 2^{-2n+1} \step_0^2,
\end{align*}
which proves \eqref{eq:convergence_of_nun}. To obtain the energy equality \eqref{eq:plancherel_for_measures}, we observe that
\begin{align*}
    \frac{1}{2}W_2^2(\mu,\nu) &= \frac{1}{2}W_2^2(\mu,\nu_1) + \frac{1}{2}W_2^2(\mu,\nu_0) - \frac{1}{2}W_2^2(\mu,\nu_1) - \step_0 W_1(\nu_0, \nu_1) \\
    &\quad + \step_0 W_1(\nu_0, \nu_1),\\
    &= \frac{1}{2}W_2^2(\mu,\nu_1) + D_{\step_{0}}(\nu_0, \nu_1) + \step_0 W_1(\nu_0, \nu_1).
\end{align*}
where in the second line we have used the equality of \eqref{prob:initial-multiscale-statement} and \eqref{prob:projection_problem}, proven in \Cref{prop:meyer-for-multiscale}. Iterating this equality, we obtain
\begin{equation*}
\frac{1}{2}W_2^2(\mu,\nu) = \frac{1}{2}W_2^2(\mu,\nu_k) + \sum_{n=0}^{k-1} D_{\step_{n}}(\nu_{n}, \nu_{n+1}) + \step_{n} W_1(\nu_n, \nu_{n+1}).  
\end{equation*}
Letting $k$ go to infinity and using \eqref{eq:convergence_of_nun}, we obtain \eqref{eq:plancherel_for_measures}.
\end{proof}
%\begin{remark}
%\begin{enumerate}
%    \item The first statement of this result adds to the scale dependent information we obtained in \Cref{lem:small_scale_untouched}. First, \Cref{lem:small_scale_untouched} says that mass in $\nu_{n-1}$ is untouched anywhere where it moves less than distance $\step_n$ under an optimal transport plan from $\mu$ to $\nu_{n-1}$ for the hybrid cost $c$. In other words, ``small'' scale transport is ignored. On the other hand, the first statement in \Cref{thm:multiscale} shows that all ``large'' scale transport (in that it moves a distance larger than $\step_n$) is partially completed, as $\nu_n$ is such that no large scale transport remains.
    % \item The measures $\mu$ and $\nu_n$ are indistinguishable up to transport at scale $\step_n$ (see \eqref{eq:displacement_bound_for_w2optimal}); This is our version of \eqref{eq:scale_estimate_for_f}. Further refinements are made at lower scale at subsequent steps
    % \item Since $\nu_n \ll \lebesgue$ for all $n$, we can apply \Cref{lem:existence_of_otmap_for_hybrid} to obtain the existence of a transport map $S_n$ pushing $\nu_n$ forward onto $\mu$ and optimal for the cost $\hybrid$, which depends on $\step_n$. By applying the soft thresholding specified in \Cref{prop:soft_thresholding_of_transport_map}, we obtain $S_n = T_n \circ \tilde{S}_n$, with $\nu_{n+1} = (\tilde{S}_n)_\# \nu_n$ and $\tilde{S}_n$ optimal for the Wasserstein 1 distance. In this sense the sequence $\nu_n$ consists of iterative refinements of $\nu$ towards $\mu$. 
%\end{enumerate}
%\end{remark}

\bibliographystyle{siamplain}
\bibliography{bibliography}
\end{document}